\theoremstyle{theorem}
\newtheorem{thm}{Theorem}[section]
\newtheorem{theoremx}{Theorem}
\numberwithin{equation}{section}
\newtheorem{question}[thm]{Question}
\newtheorem{corollary}[thm]{Corollary}
\newtheorem{lemma}[thm]{Lemma}
\newtheorem{proposition}[thm]{Proposition}
\theoremstyle{definition}
\newtheorem{definition}[thm]{Definition}
\newtheorem{example}[thm]{Example}
\newtheorem{remark}[thm]{Remark}
\newtheoremstyle{TheoremNum}
        {8pt}{8pt}              
        {\upshape}                      
        {}                              
        {\bfseries}                     
        {.}                             
        {.5em}                             
        {\thmname{#1}\thmnote{ \bfseries #3}}
  \theoremstyle{TheoremNum}
\newcommand{\m}{\mathfrak{m}}
\newcommand{\n}{\mathfrak{n}}
\newcommand{\fpt}{{\operatorname{fpt}}}
\newcommand{\pd}{\operatorname{pd}}
\newcommand{\Spec}{\operatorname{Spec}}
\newcommand{\Hom}{\operatorname{Hom}}
\newcommand{\Ext}{\operatorname{Ext}}
\newcommand{\Ass}{\operatorname{Ass}}	
\newcommand{\depth}{\operatorname{depth}}
\newcommand{\Ht}{\operatorname{ht}}	
\newcommand{\BHt}{\operatorname{bight}}
\renewcommand{\leq}{\leqslant}
\renewcommand{\geq}{\geqslant}
\subjclass[2010]{13A35, 14B05, 14H20, 14M05, 13D99.}
\begin{document}

\title{Symbolic power containments in singular rings in positive characteristic}

\author{Elo\'isa Grifo}
\email{grifo@unl.edu\vspace{-0.55em}}
\address{Department of Mathematics\\University of Nebraska --- Lincoln\\Lincoln, NE 68588}
\thanks{Grifo was supported in part by NSF Grant DMS \#2001445 and \#2140355.}

\author{Linquan Ma}
\email{ma326@purdue.edu\vspace{-0.55em}}
\address{Department of Mathematics\\ Purdue University\\  West Lafayette\\ IN 47907}
\thanks{Ma was supported in part by NSF Grant DMS \#1901672, NSF FRG Grant \#1952366, and a fellowship from the Sloan Foundation.}

\author{Karl Schwede}
\email{schwede@math.utah.edu\vspace{-0.5em}}
\address{Department of Mathematics\\ University of Utah\\ Salt Lake City\\ UT 84112}
\thanks{Schwede was supported in part by NSF CAREER Grant DMS \#1252860/1501102, NSF Grant \#1801849, and a Fellowship from the Simons Foundation.}

\maketitle

\begin{abstract}
	The containment problem for symbolic and ordinary powers of ideals asks for what values of $a$ and $b$ we have $I^{(a)} \subseteq I^b$. Over a regular ring, a result by Ein--Lazarsfeld--Smith, Hochster--Huneke, and Ma--Schwede partially answers this question, but the containments it provides are not always best possible. In particular, a tighter containment conjectured by Harbourne has been shown to hold for interesting classes of ideals --- although it does not hold in general. In this paper, we develop a Fedder (respectively, Glassbrenner) type criterion for $F$-purity (respectively, strong $F$-regularity) for ideals of finite projective dimension over $F$-finite Gorenstein rings and use our criteria to extend the prime characteristic results of Grifo--Huneke to singular ambient rings. For ideals of infinite projective dimension, we prove that a variation of the containment still holds, in the spirit of work by Hochster--Huneke and Takagi.
\end{abstract}

\section{Introduction}


Given an ideal $I\subseteq R$, its $n$-th symbolic power $I^{(n)}$ consists of the elements of $R$ whose image in $R_P$ are contained in $I^nR_P$ for all $P\in \Ass(R/I)$. In particular, if $I$ is radical, $I^{(n)}$ is obtained by collecting the minimal primary components of $I^n$ (i.e., elements of $R$ that vanish generically of order $n$ at each component of $I$). It is clear that $I^n\subseteq I^{(n)}$, but the two do not coincide in general, and finding generators for $I^{(n)}$ is often a very difficult question. Comparing the two graded families $\{ I^n \}$ and $\{ I^{(n)} \}$ of ideals is a natural question, and the containment problem asks for such a comparison: when is $I^{(a)} \subseteq I^b$? When $R$ is regular, Ein--Lazarsfeld--Smith \cite{ELS}, Hochster--Huneke \cite{comparison}, and Ma--Schwede \cite{MaSchwede} gave an answer to the containment problem by showing that for all radical ideals $I\subseteq R$,
$$I^{(hn)} \subseteq I^n \text{ for all } n\geqslant 1$$
where $h$ is the \emph{big height} of $I$: the maximal height of an associated prime of $I$, which we will denote throughout by $\BHt(I)$.

These results are pioneered by work of Swanson \cite{Swanson} showing that the containment problem has an answer for $I$, meaning that for every $b$ there exists an $a$ such that $I^{(a)} \subseteq I^b$, if and only if there exists a constant $c$ such that $I^{(cn)} \subseteq I^n$ for all $n\geq 1$. However, these results do not fully settle the containment problem, and examples abound where the containments can be improved. In prime characteristic $p$ and when $R$ is regular, the stronger containment $I^{(hq-h+1)} \subseteq I^{[q]}$ holds for all $q = p^e$, while ideals $I$ with $I^{(hn-h)} \nsubseteq I^n$ can easily be constructed for any given $n$ and $h=\BHt(I)$. Harbourne \cite{Seshadri} asked what happens in the border case: is it true that for all $n \geqslant 1$ and all radical ideals $I$ in a regular ring $R$ we have
$$I^{(hn-h+1)} \subseteq I^n?$$

In a surprising twist, Dumnicki, Szemberg, and Tutaj-Gasi\'nska found a counterexample to Harbourne's conjecture \cite{counterexamples}, and soon both extensions of the original counterexample and new families of counterexamples followed \cite{HaSeFermat,MalaraSzpond,BenCounterexample,Akesseh,MalaraSzpond,RealsCounterexample}. However, the counterexamples known at the time when this paper was written all correspond to very special configurations in $\mathbb{P}^n$, and Harbourne's conjecture does hold for general sets of points in $\mathbb{P}^2$ \cite{BoH}, $\mathbb{P}^3$ \cite{Dumnicki2015}, and other special configurations of points, such as the star configurations \cite{HaHu}. In prime characteristic $p$, Grifo and Huneke \cite{GrifoHuneke} showed that Harbourne's conjecture holds for ideals defining $F$-pure rings, making use of Fedder's Criterion \cite{Fedder}. For the subclass of ideals defining strongly $F$-regular rings, they showed that one can replace the big height $h \geqslant 2$ with $h-1$. This yields equality of symbolic and ordinary powers when $h=2$.

When the ambient ring $R$ is not regular, much less is known. Swanson's work \cite{Swanson} has inspired a search for rings with the uniform symbolic topologies property, that is, rings with a constant $c$ not depending on the prime $P$ such that $P^{(cn)} \subseteq P^n$ for all $n\geq 1$. It is conjectured that a uniform $c$ should exist in great generality, and while the question remains open in general, it has been solved in many interesting cases \cite{RobertHH,RobertNormal,RobertUSTP,JavierDanielDiagFreg,HKV,HunekeKatzAbelian}.

The main goal of this paper is to extend the aforementioned results of Grifo and Huneke to singular ambient rings. That is, instead of studying all radical ideals $I\subseteq R$, we focus on the case that $R/I$ is $F$-pure or strongly $F$-regular (but $R$ need not be regular). Our first main result is the following:

\begin{theoremx}[Theorem \ref{thm.MainFinitePdim}]
Let $R$ be an $F$-finite Gorenstein ring of prime characteristic $p$ and $Q\subseteq R$ be an ideal of finite projective dimension with big height $h$. Then
\begin{enumerate}
  \item If $R/Q$ is $F$-pure, then $Q^{(hn-h+1)}\subseteq Q^n$ for all $n \geqslant 1$.
  \item If $R/Q$ is strongly $F$-regular and $h\geq 2$, then $Q^{((h-1)(n-1)+1)}\subseteq Q^{n}$ for all $n \geqslant 1$.
\end{enumerate}
\end{theoremx}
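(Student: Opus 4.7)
My plan is to adapt the strategy of Grifo--Huneke from the regular case \cite{GrifoHuneke}, using the generalized Fedder and Glassbrenner criteria for $R/Q$ developed earlier in this paper together with a Gorenstein analog of the Ein--Lazarsfeld--Smith / Hochster--Huneke symbolic-to-Frobenius containment. Since the problem localizes, we may reduce to $(R,\mathfrak{m})$ local, $F$-finite, and Gorenstein, preserving both the finite projective dimension and the big height of $Q$.

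The Fedder-type criterion furnishes, for every $e \geq 1$, an $R$-linear map $\phi_e \colon F^e_*R \to R$ with $\phi_e(F^e_*1) = 1$ and $\phi_e(F^e_* Q) \subseteq Q$. By $R$-linearity, $\phi_e(F^e_* (J^{[q]} \cdot M)) = J \cdot \phi_e(F^e_* M)$ for every ideal $J$ and submodule $M$, with $q = p^e$. The second ingredient is an analog of the symbolic-to-Frobenius containment
\[
Q^{(hq - h + 1)} \subseteq Q^{[q]}, \qquad q = p^e.
\]
Because $\pd_R(R/Q) < \infty$, Peskine--Szpiro gives $\operatorname{Tor}^R_{\geq 1}(R/Q, F^e_*R) = 0$, so the tight-closure arguments producing this containment in the regular setting should adapt via the Gorenstein trace / Cartier structure on $\omega_R$; I expect this (or a close variant) to be established earlier in the paper.

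Combining the two: fix $n \geq 1$, pick $q = p^e$ with $q \leq n$, and let $r \in Q^{(hn-h+1)}$. Raising to the $q$-th power gives $r^q \in Q^{(q(hn-h+1))}$; since $q(hn-h+1) \geq n(hq-h+1)$ (the difference is $(h-1)(n-q) \geq 0$), this lies inside $(Q^{(n)})^{(hq-h+1)} \subseteq (Q^{(n)})^{[q]}$, using the identity $(Q^{(n)})^{(k)} = Q^{(nk)}$ and the symbolic-Frobenius containment applied to $Q^{(n)}$ (whose big height is $h$). The critical step, where $F$-purity enters decisively, is upgrading $(Q^{(n)})^{[q]}$ to $(Q^n)^{[q]}$ by iteratively exploiting $\phi_e(F^e_*Q) \subseteq Q$ to peel off ordinary rather than symbolic factors; then $r = \phi_e(F^e_* r^q) \in \phi_e(F^e_*(Q^n)^{[q]}) = Q^n$, closing the induction on $n$.

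For part (2), the Glassbrenner-type criterion yields splittings with $\phi_e(F^e_* c) = 1$ for any prescribed $c$ outside the minimal primes of $Q$; this extra flexibility lets us save one factor of $Q$ per Frobenius iteration, producing the improved exponent $(h-1)(n-1)+1$ exactly as in \cite{GrifoHuneke}. The hardest step, in my view, is the upgrade from $(Q^{(n)})^{[q]}$ to $(Q^n)^{[q]}$: this is the heart of the Grifo--Huneke argument and must be replicated using the new compatible Fedder (respectively Glassbrenner) splittings in the singular ambient setting. A secondary difficulty is establishing the Gorenstein analog of the symbolic-to-Frobenius containment, since in the regular case one uses that $Q^{[q]}$ is automatically tightly closed, a property we must now control via the finite projective dimension hypothesis and the Gorenstein structure.
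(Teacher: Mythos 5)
Your high-level strategy does match the paper's: localize, lift the splitting of $R/Q$ to a map $\varphi\in\Hom_R(F^e_*R,R)$ via the generalized Fedder/Glassbrenner criteria, and feed in a symbolic-to-Frobenius containment proved by localizing at $\Ass(Q)$ (where finite projective dimension forces $R_P$ to be regular and $\Ass(Q^{[q]})=\Ass(Q)$) together with the pigeonhole principle. However, the combining step --- which you yourself flag as ``the hardest step'' --- is left as a black box, and the specific mechanism you sketch does not work. If you take $q\le n$, raise $r\in Q^{(hn-h+1)}$ to the $q$-th power and land in $(Q^{(n)})^{[q]}$, then applying a lifted splitting only returns $r\in Q^{(n)}$, which is vacuous; a single compatible splitting cannot ``upgrade'' $(Q^{(n)})^{[q]}$ to $(Q^n)^{[q]}$. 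Moreover, invoking the symbolic-to-Frobenius containment for the ideal $Q^{(n)}$ is itself problematic: globalizing that containment rests on $\Ass\bigl((-)^{[q]}\bigr)=\Ass(-)$, which is guaranteed by $\pd(R/Q)<\infty$ but would here require $\pd(R/Q^{(n)})<\infty$, which is not among your hypotheses.

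The actual argument runs with $q=p^e\gg 0$ rather than $q\le n$, and it is a colon-ideal computation rather than an elementwise one. One shows (Lemma \ref{lem.I_econtainment1}) that for $q\gg 0$,
$$\left( I_e(Q):Q\right) \subseteq \left( I_e(Q)\,\left(Q^{n-1}\right)^{[q]} : \left(Q^{(hn-h+1)}\right)^{[q]} \right),$$
the point being that for $t\in (I_e(Q):Q)$ one writes $t\,(Q^{(hn-h+1)})^{[q]}\subseteq (tQ)\,(Q^{(hn-h+1)})^{q-1}$: the single factor $tQ\subseteq I_e(Q)$ is what converts one symbolic factor into something $\varphi$ maps into $Q$, while $(Q^{(hn-h+1)})^{q-1}\subseteq (Q^{[q]})^{n-1}$ supplies the remaining $n-1$ honest powers (this is where pigeonhole and $\Ass(Q^{[q]})=\Ass(Q)$ enter, applied only to $Q$ itself, never to $Q^{(n)}$). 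Since the lifted splitting has the form $\Phi_e(F^e_*r\cdot -)$ with $r\in (I_e(Q):Q)$ and $\Phi_e(F^e_*(I_e(Q):Q))=R$, applying $\Phi_e$ and Lemma \ref{lem.PhiI_e} yields $R\subseteq (Q^n:Q^{(hn-h+1)})$ in one shot; part (1) needs no induction at all. For part (2) the same scheme with an auxiliary $c\in (Q^{n+h-1}:Q^{(n+h-1)})$ gives $Q^{(n+h-1)}\subseteq QQ^{(n)}$, and only then does one induct on $n$. To make your proof complete you would need to supply this colon computation (or an equivalent), not merely assert that the Grifo--Huneke step ``must be replicated.''
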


In a regular ring, every ideal has finite projective dimension, hence the theorem above extends \cite[Theorem 3.2 and Theorem 4.1]{GrifoHuneke}.
As a central ingredient of the proof of this result, we extend Fedder's Criterion \cite{Fedder} for $F$-purity and Glassbrenner's Criterion \cite{Glassbrenner} for strong $F$-regularity to ideals of finite projective dimension in an $F$-finite Gorenstein local ring.  We believe these results will be of independent interest.

\begin{theoremx}[Corollaries \ref{cor.Fedder} and \ref{cor.Glassbrenner}]
Let $(R,\m)$ be a local $F$-finite Gorenstein ring and let $Q\subseteq R$ be an ideal such that $\pd(R/Q)<\infty$.
\begin{enumerate}
  \item $R/Q$ is $F$-pure if and only if $\left( Q^{[p^e]}:Q \right) \nsubseteq I_e(\m)$ for some (equivalently every) $e > 0$.\footnote{Here $I_e(\m)$ is the set of elements $r \in R$ such that $\phi(F^e_* r) \in \m$ for every $\phi \in \Hom_R(F^e_* R, R)$.}
  \item $R/Q$ is strongly $F$-regular if and only if for any $c \notin Q$ there exists $e$ such that $c(I_e(Q):Q) \nsubseteq I_e(\m)$
\end{enumerate}
\end{theoremx}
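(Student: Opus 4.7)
The plan is to establish a Fedder-type isomorphism
\[
\Hom_R(F^e_*(R/Q), R/Q) \;\cong\; F^e_*\bigl((Q^{[p^e]}:Q)/Q^{[p^e]}\bigr)
\]
generalizing the classical one in the regular case, and then read off both criteria. Fix a generator $\Phi_e$ of the free rank-one $F^e_* R$-module $\Hom_R(F^e_* R, R)$ (which exists by Gorenstein $F$-finiteness), so that every $R$-linear map $F^e_* R \to R$ has the form $\phi_r \colon F^e_* s \mapsto \Phi_e(F^e_*(rs))$ for a unique $r \in R$.

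The heart of the argument is the identity $I_e(Q) = Q^{[p^e]}$, which can fail over a singular ambient ring in general but holds in our setting thanks to $\pd_R(R/Q) < \infty$. I combine two vanishing results. Since $F^e_* R$ is maximal Cohen--Macaulay over the Gorenstein ring $R$, we have $\Ext^i_R(F^e_* R, R) = 0$ for all $i > 0$, and an induction on projective dimension extends this to $\Ext^i_R(F^e_* R, N) = 0$ for every $N$ of finite projective dimension. In parallel, Peskine--Szpiro gives $\operatorname{Tor}^R_i(F^e_* R, R/Q) = 0$ for $i > 0$. Applying $\Hom_R(F^e_* R, -)$ to $0 \to Q \to R \to R/Q \to 0$ and using $\Ext^1_R(F^e_* R, Q) = 0$ yields
\[
\Hom_R(F^e_* R, R/Q) \;\cong\; F^e_* R \,/\, F^e_*(I_e(Q)) \;=\; F^e_*(R/I_e(Q)).
\]
Meanwhile, applying $\Hom_R(F^e_* R, R) \otimes_R {-}$ and $\Hom_R(F^e_* R, -)$ term-by-term to a finite free resolution of $R/Q$ produces the same complex (both functors agree on finite-free modules via $\Hom_R(F^e_* R, R) \cong F^e_* R$), so the combined Ext- and Tor-vanishing identifies the natural comparison map $\Hom_R(F^e_* R, R) \otimes_R R/Q \to \Hom_R(F^e_* R, R/Q)$ as an isomorphism. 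Its left-hand side is $F^e_* R \otimes_R R/Q \cong F^e_*(R/Q^{[p^e]})$, so comparing presentations and using $Q^{[p^e]} \subseteq I_e(Q)$ forces $I_e(Q) = Q^{[p^e]}$.

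With this identity in hand, applying $\Hom_R(-, R/Q)$ to $0 \to F^e_* Q \to F^e_* R \to F^e_*(R/Q) \to 0$ and plugging in the description of $\Hom_R(F^e_* R, R/Q)$ produces an injection $\Hom_R(F^e_*(R/Q), R/Q) \hookrightarrow F^e_*(R/Q^{[p^e]})$ whose image consists of those classes $\overline{F^e_* r}$ with $\Phi_e(F^e_*(rQ)) \subseteq Q$. This last condition reads $rQ \subseteq I_e(Q) = Q^{[p^e]}$, so the image equals $F^e_*((Q^{[p^e]}:Q)/Q^{[p^e]})$, giving the desired Fedder-type isomorphism.

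Both criteria then follow directly. Since $R$ is $F$-finite, $R/Q$ is $F$-pure if and only if the map $R/Q \to F^e_*(R/Q)$ sending $1 \mapsto F^e_* 1$ splits, if and only if some $\phi_r$ with $r \in Q^{[p^e]}:Q$ sends $F^e_* 1$ to a unit of the local ring $R/Q$; since $\phi_r(F^e_* 1) = \overline{\Phi_e(F^e_* r)}$, this translates to $\Phi_e(F^e_* r) \notin \mathfrak{m}$, equivalently $r \notin I_e(\mathfrak{m})$, which is (1). Repeating the argument with $1 \mapsto F^e_* c$ for an arbitrary $c$ outside every minimal prime of $Q$ characterises strong $F$-regularity by the condition $c(Q^{[p^e]}:Q) \not\subseteq I_e(\mathfrak{m})$ for some $e$; substituting $Q^{[p^e]} = I_e(Q)$ produces the stated form (2). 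The main obstacle is precisely the identification $I_e(Q) = Q^{[p^e]}$, which is where the finite-projective-dimension hypothesis plays its essential role; downstream of this, the argument closely mirrors Fedder's and Glassbrenner's regular-ambient proofs.
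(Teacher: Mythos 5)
Your argument is correct, but it takes a genuinely different route from the paper's. The paper first proves a lifting lemma (Lemma \ref{lem.LiftMap}) by Grothendieck-dualizing a finite free resolution of $R/Q$ and realizing $\phi^\vee$ as an honest chain map, then feeds the lift through Lemma \ref{lem.MapRestrict} and finishes with the localization argument of Lemma \ref{lem.AssPrimeFinitePdim}, which requires $Q$ radical (supplied by $F$-purity, resp.\ strong $F$-regularity, of $R/Q$). You instead assemble a full Fedder-type isomorphism $\Hom_{R/Q}(F^e_*(R/Q),R/Q)\cong F^e_*\bigl((Q^{[p^e]}:Q)/Q^{[p^e]}\bigr)$ from two vanishing statements --- $\Ext^{>0}_R(F^e_*R,N)=0$ for $N$ of finite projective dimension (since $F^e_*R$ is maximal Cohen--Macaulay over the Gorenstein ring $R$) and Peskine--Szpiro's $\operatorname{Tor}^R_{>0}(F^e_*R,R/Q)=0$ --- which is essentially Iyengar's argument from Remark \ref{rmk.Sri} made explicit and pushed further. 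Your route buys the identity $I_e(Q)=Q^{[p^e]}$ for \emph{every} ideal of finite projective dimension, not only radical ones as in Lemma \ref{lem.AssPrimeFinitePdim}, and the lifting lemma falls out as a byproduct of the surjectivity of $\Hom_R(F^e_*R,R)\to\Hom_R(F^e_*R,R/Q)$; what the paper's arrangement buys is that the implications $(1)\Rightarrow(2)\Rightarrow(3)$ of Corollaries \ref{cor.Fedder} and \ref{cor.Glassbrenner} hold with no finiteness hypothesis at all, whereas your isomorphism invokes $\pd(R/Q)<\infty$ from the outset. Two small points to tighten: ``$\Phi_e(F^e_*r)\notin\m$, equivalently $r\notin I_e(\m)$'' is literally only one implication ($r\notin I_e(\m)$ yields some $s$ with $\Phi_e(F^e_*(rs))\notin\m$, and one then replaces $r$ by $rs$, harmless since $(Q^{[p^e]}:Q)$ is an ideal --- the same gloss occurs in Fedder's original proof); and the ``for some, equivalently every, $e$'' clause should be recorded as following from the fact that your criterion is an equivalence for each fixed $e$ while $F$-purity is independent of $e$.
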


To show these results we generalize, to quotients of Gorenstein rings by ideals of finite projective dimension, Fedder's key lemma on lifting $p^{-1}$-linear maps in quotients of regular rings, {\itshape c.f.} \cite[Corollaries to Lemma 1.6]{Fedder}.
\begin{theoremx}[Lemma \ref{lem.LiftMap}]
	Let $R$ be a local $F$-finite Gorenstein ring and let $Q\subseteq R$ be an ideal of finite projective dimension. Then any map $\phi\in \Hom_{R/Q}(F^e_*(R/Q), R/Q)$ lifts to a map $\widetilde{\phi}\in \Hom_R(F^e_*R, R)$.
\end{theoremx}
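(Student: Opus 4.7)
The plan is to reduce the lifting problem to a vanishing of $\Ext^1$, and then to establish that vanishing using the Gorenstein property of $R$ together with the finite projective dimension of $Q$. Given $\phi \in \Hom_{R/Q}(F^e_*(R/Q), R/Q)$, let $\pi \colon F^e_* R \twoheadrightarrow F^e_*(R/Q)$ be the natural surjection and set $\phi' := \phi \circ \pi \in \Hom_R(F^e_* R, R/Q)$. If $\phi'$ admits a lift $\widetilde{\phi} \in \Hom_R(F^e_* R, R)$ along $R \twoheadrightarrow R/Q$, then because $\pi$ kills $F^e_* Q$, the lift $\widetilde{\phi}$ must carry $F^e_* Q$ into $Q$, and hence descends modulo $Q$ to a map $F^e_*(R/Q) \to R/Q$ that agrees with $\phi$. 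Applying $\Hom_R(F^e_* R, -)$ to the short exact sequence $0 \to Q \to R \to R/Q \to 0$ identifies the obstruction to lifting $\phi'$ with its image in $\Ext^1_R(F^e_* R, Q)$, so it suffices to prove $\Ext^1_R(F^e_* R, Q) = 0$.

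The main homological input is that $F^e_* R$ is a maximal Cohen--Macaulay $R$-module. Since $R$ is Gorenstein, hence Cohen--Macaulay, any system of parameters $x_1, \ldots, x_d$ of $R$ is a regular sequence, and so are its $p^e$-th powers $x_1^{p^e}, \ldots, x_d^{p^e}$. Under the $R$-action on $F^e_* R$, where $r$ acts as multiplication by $r^{p^e}$ on underlying elements, this translates to $x_1, \ldots, x_d$ being a regular sequence on $F^e_* R$, so $\Depth_R(F^e_* R) = \dim R$. Since $R$ is Gorenstein, the standard vanishing $\Ext^i_R(N, R) = 0$ for $i > 0$ and $N$ maximal Cohen--Macaulay applies with $N = F^e_* R$, giving $\Ext^{\geq 1}_R(F^e_* R, F) = 0$ for every finitely generated free $R$-module $F$.

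To finish I would use $\pd_R Q < \infty$ to propagate this vanishing from free modules to $Q$. Pick a finite free resolution $0 \to F_n \to \cdots \to F_0 \to Q \to 0$ and iterate syzygies: the short exact sequence $0 \to \Omega Q \to F_0 \to Q \to 0$, together with the vanishing of $\Ext^{\geq 1}_R(F^e_* R, F_0)$, yields $\Ext^i_R(F^e_* R, Q) \cong \Ext^{i+1}_R(F^e_* R, \Omega Q)$ for every $i \geq 1$, and $n$ iterations reduce the computation to $\Ext^{n+1}_R(F^e_* R, F_n) = 0$. In particular $\Ext^1_R(F^e_* R, Q) = 0$, so the lift $\widetilde{\phi}$ exists. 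The only step I anticipate needing real care is verifying the maximal Cohen--Macaulay property of $F^e_* R$ over $R$; the remainder is formal long-exact-sequence bookkeeping.
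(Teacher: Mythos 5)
Your proof is correct, but it takes a genuinely different route from the paper. The paper proves the lemma by Grothendieck duality: it dualizes $\phi$ to a map $\mathbf{R}\Hom_R(R/Q,R)\to \mathbf{R}\Hom_R(F^e_*(R/Q),R)$, realizes this as an honest chain map out of the dual of a finite free resolution of $R/Q$, and dualizes back to read off the lift $\widetilde{\phi}$ in the top degree (an alternative derived-category argument via $-\otimes^{\mathbf{L}}_R R/Q$ and Peskine--Szpiro appears in Remark 3.2). You instead reduce everything to the vanishing $\Ext^1_R(F^e_*R,Q)=0$, which you obtain from two standard facts: $F^e_*R$ is maximal Cohen--Macaulay (Frobenius powers of a system of parameters remain a regular sequence), so $\Ext^{>0}_R(F^e_*R,R)=0$ over the Gorenstein ring $R$; and $\pd_R Q<\infty$ lets you dimension-shift along a finite free resolution of $Q$. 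Your observation that any lift of $\phi\circ\pi$ along $R\twoheadrightarrow R/Q$ automatically carries $F^e_*Q$ into $Q$, and hence genuinely restricts to $\phi$, closes the argument. Both proofs rest on the same two pillars --- Gorensteinness forces $\mathbf{R}\Hom_R(F^e_*R,R)$ to be concentrated in degree zero, and finite projective dimension of $R/Q$ does the rest --- but your version is more elementary (no derived categories, only long exact sequences), while the paper's duality formulation makes the mechanism more transparent and is closer in spirit to the inversion-of-adjunction picture the authors discuss afterwards. One small point worth spelling out in a final write-up: the standard vanishing $\Ext^{>0}_R(N,R)=0$ for $N$ maximal Cohen--Macaulay over Gorenstein $R$ is usually proved via local duality, so either cite it or include the short argument.
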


For an arbitrary radical ideal $I\subseteq R$ (i.e., not necessarily having finite projective dimension), the containment $I^{(hn)} \subseteq I^n$ may easily fail for $h=\BHt(I)$. However, Takagi \cite[Theorem 4.6]{TakagiFormulas} showed that we can always fix the failure of this containment by multiplying by a power of the Jacobian ideal, tightening a result of Hochster and Huneke \cite[Theorem 3.7 (b)]{comparison}. In the same spirit, we show that we can fix the failure of Harbourne-like containments by multiplying by a power of the Jacobian ideal.

\begin{theoremx}[Theorem \ref{thm.MainInfPdim}]
Let $R$ be a geometrically reduced equidimensional $k$-algebra finitely generated over a field $k$ of prime characteristic $p$. Let $Q\subseteq R$ be an ideal and let $J=J(R/k)$ be the Jacobian ideal of $R$. Then
\begin{enumerate}
  \item If $R/Q$ is $F$-pure and $Q$ has big height $h$, then $J^n Q^{(hn-h+1)} \subseteq Q^n$ for all $n \geq 1$. 
  \item If $R/Q$ is strongly $F$-regular and $h\geq 2$ is at least the largest minimal number of generators of $QR_P$ for all $P\in\Ass(R/Q)$, then $J^{2n-2} Q^{((h-1)(n-1)+1)} \subseteq Q^n$ for all $n \geqslant 1$.
\end{enumerate}
\end{theoremx}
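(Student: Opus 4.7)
The plan is to reduce the statement to the known regular-ring containments by using the Jacobian ideal $J = J(R/k)$ as a uniform test element for tight closure in $R$, in the spirit of Hochster--Huneke \cite{comparison} and Takagi \cite{TakagiFormulas}. Once $R/Q$ is $F$-pure (respectively, strongly $F$-regular), the Frobenius splitting on $R/Q$ combined with the test-element role of $J$ should allow one to execute inside $R$ a version of the Grifo--Huneke argument \cite{GrifoHuneke} originally carried out in the regular setting. The crucial preliminary is that every element of $J$ is a test element for tight closure in $R$: choosing a Noether normalization $A \hookrightarrow R$, geometric reducedness and equidimensionality force the extension $\mathrm{Frac}(R)/\mathrm{Frac}(A)$ to be separable, producing a nonzero trace map $\mathrm{Tr} \colon R \to A$ whose composition with splittings of $A$ yields $R$-linear maps $F^e_* R \to R$ whose image contains $J$ via a Lipman--Sathaye-style construction.

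For part (1), I would adapt \cite[Theorem 3.2]{GrifoHuneke}: the $F$-pure splitting $\bar{\phi}_e \colon F^e_*(R/Q) \to R/Q$ should lift, up to multiplication by $J$, to an $R$-linear map at each Frobenius level, and combining these lifts with the Ein--Lazarsfeld--Smith/Hochster--Huneke symbolic containment yields, after $n$ iterations, a containment of the form $J^{n-1} Q^{(hn-h+1)} \subseteq (Q^n)^*$. One final application of the test-element property of $J$ pushes the containment into $Q^n$ itself, producing the claimed factor $J^n$. Part (2) follows the same blueprint but substitutes strong $F$-regularity of $R/Q$, together with the generator bound $h \geq \max_{P \in \Ass(Q)} \mu(QR_P)$, in order to invoke the sharper containment of \cite[Theorem 4.1]{GrifoHuneke}; tracking Jacobian factors through the refined argument should yield the power $J^{2n-2}$.

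The main anticipated obstacle is the careful bookkeeping of the Jacobian exponents. One must show that each Frobenius lifting of the splitting $\bar\phi_e$ to $R$ contributes exactly one factor of $J$, and that the final passage from tight closure into the actual ideal $Q^n$ requires one further multiplication by $J$; for part (2), the extra $J^{n-2}$ should come from an auxiliary symbolic containment arising in the strongly $F$-regular refinement of Grifo--Huneke whose tight closure must also be controlled by $J$. Accomplishing this will require a careful adaptation of Takagi's Brian\c{c}on--Skoda-style refinement of \cite[Theorem 3.7(b)]{comparison} to the finer Harbourne-style containment setting, ensuring that the reduction to the regular ring $A$ preserves the relevant big-height data of $Q$ and that the $F$-purity (respectively strong $F$-regularity) of $R/Q$ interacts compatibly with the trace map from $R$ to $A$.
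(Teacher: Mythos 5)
Your general strategy---lift the splitting of $R/Q$ to $R$ at the cost of a factor of $J$, and use $J$ to repair the failure of Frobenius powers to behave as in the regular case---is indeed the right one, and the first half matches the paper's key Lemma \ref{lem.LiftingMapJacobian} (which is proved not via a trace map to a Noether normalization but by observing that $J$ annihilates $\Ext^1_R(F^e_*R,-)$, since $J$ is generated by elements $c$ for which multiplication by $c$ on $F^e_*R$ factors through a free module). However, the heart of your argument is the asserted containment $J^{n-1}Q^{(hn-h+1)}\subseteq (Q^n)^*$, and this is exactly the step you do not justify. A pure tight-closure computation (Ein--Lazarsfeld--Smith / Hochster--Huneke / Takagi style) only yields the exponent $hn$, not the Harbourne exponent $hn-h+1$; the improvement by $h-1$ comes precisely from feeding the splitting of $R/Q$ into a Fedder-type colon-ideal argument, i.e., from containments of the form $(I^\varphi_e(Q):Q)\subseteq \bigl(I^\varphi_e(Q)(Q^{n-1})^{[q]}:(J^{n-1}Q^{(hn-h+1)})^{[q]}\bigr)$ together with $\varphi(F^e_*(I_e^\varphi(Q)I^{[q]}:K^{[q]}))\subseteq (QI:K)$. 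Your sketch never explains how the $F$-purity of $R/Q$ enters the tight-closure computation, and without it the exponent you need is out of reach. Relatedly, establishing the Frobenius-power comparison $(Q^{(hn-h+1)})^{q-1}(J^{[q]})^{n-1}\subseteq (Q^{[q]})^{n-1}$ in the singular ring requires the Hochster--Huneke Jacobian lemma $J^{[q]}(I^{[q]})^W\subseteq I^{[q]}$ applied $n-1$ times; this, not an iteration of liftings, is where most of the Jacobian factors actually come from.

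Your exponent bookkeeping is also off as stated: in the paper's argument only \emph{one} lifted map (hence one factor of $J$) is used per induction step, while the remaining $J^{n-1}$ (respectively $J^{2n-4}$) comes from the induction hypothesis combined with the colon-ideal containment above; the claim that ``each Frobenius lifting contributes exactly one factor of $J$'' iterated $n$ times does not describe a workable induction, because the lifted map $\varphi$ depends on $e$ and on the chosen $x\in J$, and one must first prove $J\subseteq (Q^n:J^{n-1}Q^{(hn-h+1)})+Q$ and then remove the ``$+Q$'' using the induction hypothesis. For part (2) the phrase ``the extra $J^{n-2}$ should come from an auxiliary symbolic containment'' is not an argument; the actual source of $J^{2n-2}$ is the single-step containment $J^2Q^{((h-1)(n-1)+1)}\subseteq QQ^{((h-1)(n-2)+1)}$ (one $J$ from lifting, one from the colon lemma), multiplied through by $J^{2n-4}$ from the inductive hypothesis. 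As written, the proposal identifies the correct ingredients but leaves the decisive containment unproved and would need essentially the full colon-ideal machinery of the paper to be completed.
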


Takagi's result \cite{TakagiFormulas} relies on a refinement of the subadditivity formula for test ideals. Our method is different and it relates directly to the liftablity of $p^{-1}$-linear maps; while not every such map can be lifted from $R/Q$ to $R$, we show that under various assumptions, every $p^{-1}$-linear map can be lifted after multiplication by elements in the Jacobian ideal (Lemma \ref{lem.LiftingMapJacobian}).

Finally, in Section 5, we extend a containment result by Takagi and Yoshida \cite[Remark 3.4]{TakagiYoshida} involving $F$-pure thresholds to non-regular rings (see Theorems \ref{fpt finite pdim} and \ref{fpt infinite pdim}), and in Section 6, we provide some examples showing the sharpness of our result. Throughout this article, all rings are commutative, Noetherian, with unity, and have positive prime characteristic $p$.

\section{Preliminaries}

In this section, we recall some definitions and collect some basic facts about the Frobenius non-splitting ideals $I_e(-)$. We use $F^e_*R$ to denote the Frobenius pushforward of $R$, i.e., the target of the natural Frobenius map $R\to F_*^eR$. We say $R$ is {\it $F$-finite} if $F^e_*R$ is a finite $R$-module.

\begin{definition}[{\itshape c.f.} \cite{AE}]
Suppose $R$ is an $F$-finite ring.  Let $Q\subseteq R$ be an ideal and let $\varphi \in \Hom_R(F^e_*R, R)$. We define
\[
I_e^\varphi(Q) = \left\lbrace r \in R : \varphi(F^e_*(rR)) \subseteq Q \right\rbrace.
\]
Even without a fixed $\varphi$, we define
\[
I_e(Q) = \left\lbrace r \in R : \varphi(F^e_*r) \in Q \textrm{ for all } \varphi \in \Hom_R(F^e_*R, R) \right\rbrace.
\]
\end{definition}

The ideals $I_e$ can be viewed as a different way to generalize $Q^{[p^e]}$ from the regular case to the singular case.  Indeed, it is straightforward to see that
\[
Q^{[p^e]} \subseteq I_e(Q) \subseteq I_e^\varphi(Q).
\]
On the other hand, by \cite[Lemma 1.6]{Fedder}, it follows that $Q^{[p^e]} = I_e(Q)$ if $R$ is regular.  We will generalize this result to radical ideals of finite projective dimension at the end of this section in Lemma \ref{lem.AssPrimeFinitePdim}.

\begin{remark}[The Gorenstein case]
When $R$ is local $F$-finite and Gorenstein, $\Hom_R(F^e_*R, R)\cong (F^e_*R)\cdot \Phi_e$ where $\Phi_e$ is the Grothendieck trace map. Therefore, in this case
\[
I_e(Q) = \left\lbrace r \in R : \Phi_e(F^e_*(rR)) \subseteq Q \right\rbrace = I_e^{\Phi}(Q).
\]
\end{remark}

We next recall the definitions of $F$-pure and strongly $F$-regular rings.

\begin{definition}
Let $R$ be an $F$-finite ring. We say $R$ is \emph{$F$-pure} if the natural map $R\to F_*^eR$ splits as a map of $R$-modules for one (or equivalently, all) $e>0$. We say $R$ is \emph{strongly $F$-regular}, if for every $c$ not in any minimal prime of $R$, there exists $e>0$ such that the map $R\to F^e_*R$ sending $1$ to $F^e_*c$ splits as a map of $R$-modules.
\end{definition}

\begin{lemma}
\label{lem.MapRestrict}
Let $R$ be a local $F$-finite Gorenstein ring and consider an ideal $Q\subseteq R$. The map $\Phi_e(F^e_*r\cdot-)\in \Hom_R(F^e_*R, R)$ induces a map in $\Hom_{R/Q}(F^e_*(R/Q), R/Q)$ if and only if $r\in (I_e(Q): Q)$.
\end{lemma}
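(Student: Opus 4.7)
The plan is to unwind what it means for $\psi_r := \Phi_e(F^e_*(r \cdot -)) \in \Hom_R(F^e_*R, R)$ to descend to the quotient $R/Q$, and then translate the resulting condition into $r \in (I_e(Q):Q)$ using the Gorenstein remark that $I_e(Q) = I_e^{\Phi_e}(Q)$.

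First, since the Frobenius pushforward $F^e_*$ is exact, applying it to $0 \to Q \to R \to R/Q \to 0$ identifies $F^e_*(R/Q)$ with $F^e_*R / F^e_*Q$. Thus $\psi_r$ descends to an element of $\Hom_{R/Q}(F^e_*(R/Q), R/Q)$ if and only if $\psi_r(F^e_*Q) \subseteq Q$; the induced map is automatically $R/Q$-linear because the $R$-actions on $F^e_*(R/Q)$ and $R/Q$ factor through $R/Q$ (note $q^{p^e} \in Q$ for any $q \in Q$). Unwinding, the condition becomes: $\Phi_e(F^e_*(rq)) \in Q$ for every $q \in Q$.

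Next, I will check that this is equivalent to ``$rq \in I_e(Q)$ for every $q \in Q$'', i.e., $r \in (I_e(Q):Q)$. By the remark preceding the statement, the Gorenstein hypothesis gives $I_e(Q) = I_e^{\Phi_e}(Q) = \{x \in R : \Phi_e(F^e_*(xs)) \in Q \text{ for all } s \in R\}$. Setting $x = rq$ and $s = 1$ yields the forward implication. Conversely, if $\Phi_e(F^e_*(rq')) \in Q$ for every $q' \in Q$, then for a fixed $q \in Q$ and any $s \in R$ we have $qs \in Q$, so $\Phi_e(F^e_*((rq)s)) = \Phi_e(F^e_*(r(qs))) \in Q$, which says $rq \in I_e^{\Phi_e}(Q) = I_e(Q)$.

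The proof is essentially a bookkeeping exercise in the definitions, and I do not anticipate a serious obstacle. The only nontrivial input is the Gorenstein identification $I_e(Q) = I_e^{\Phi_e}(Q)$ recorded in the remark, together with the routine observation that the $R$-module structure on $F^e_*(R/Q)$ descends to an $R/Q$-structure.
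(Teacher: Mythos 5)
Your proposal is correct and follows essentially the same route as the paper: the paper's proof likewise reduces the descent condition to $\Phi_e(F^e_*(rQ))\subseteq Q$ and identifies this with $rQ\subseteq I_e(Q)$ via the Gorenstein fact $I_e(Q)=I_e^{\Phi}(Q)$. You merely spell out the ``by definition'' step and the $R/Q$-linearity of the induced map more explicitly, which is fine.
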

\begin{proof}
The map needs to send $F^e_*Q$ to $Q$, which means $\Phi_e(F^e_*(rQ))\subseteq Q$. This happens if and only if $rQ\subseteq I_e(Q)$ by definition.
\end{proof}

\begin{lemma}
\label{lem.PhiI_e}
	Let $I$, $J$ and $Q$ be ideals in $R$ and $\varphi \in \Hom_R(F^e_*R,R)$. Then for all $q=p^e$,
	$$\varphi \left(F^e_* (I_e(Q) I^{[q]} : J^{[q]}) \right)\subseteq\varphi \left(F^e_* (I^\varphi_e(Q) I^{[q]} : J^{[q]}) \right) \subseteq  \left( Q I : J \right).$$
\end{lemma}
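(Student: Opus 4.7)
The plan is to check the two inclusions separately; the first follows instantly from definitions, while the second rests on the standard ``projection-formula'' identity that moves an $R$-module factor across a Frobenius pushforward.

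For the first inclusion, recall that $I_e(Q) = \bigcap_{\psi} I_e^{\psi}(Q)$, where $\psi$ ranges over $\Hom_R(F^e_* R, R)$; in particular $I_e(Q) \subseteq I_e^\varphi(Q)$. Multiplying by $I^{[q]}$ and taking the colon ideal with $J^{[q]}$ preserves inclusions, so
\[
I_e(Q) I^{[q]} : J^{[q]} \;\subseteq\; I_e^\varphi(Q) I^{[q]} : J^{[q]}.
\]
Applying $\varphi(F^e_* -)$ to both sides yields the first inclusion.

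For the second inclusion, I would let $r \in I_e^\varphi(Q) I^{[q]} : J^{[q]}$ and aim to show $\varphi(F^e_* r) \in QI : J$. Fix an arbitrary $j \in J$; then $rj^q \in I_e^\varphi(Q) I^{[q]}$, so I can write $rj^q = \sum_{k} s_k i_k^{q}$ with $s_k \in I_e^\varphi(Q)$ and $i_k \in I$. Using the $R$-module structure identity $a \cdot F^e_* b = F^e_*(a^q b)$ together with $R$-linearity of $\varphi$, each summand satisfies
\[
\varphi\bigl(F^e_*(s_k i_k^{q})\bigr) = \varphi(i_k \cdot F^e_* s_k) = i_k \, \varphi(F^e_* s_k).
\]
By the definition of $I_e^\varphi(Q)$ applied to $r' = 1 \in R$, we have $\varphi(F^e_* s_k) \in Q$, and therefore $i_k \varphi(F^e_* s_k) \in QI$. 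Summing over $k$ and applying the same identity to $rj^q = j \cdot F^e_* r$ (on the pushforward side) gives
\[
j \, \varphi(F^e_* r) = \varphi\bigl(F^e_*(r j^q)\bigr) \in QI.
\]
Since $j \in J$ was arbitrary, $\varphi(F^e_* r) \in QI : J$, completing the proof.

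There is no genuine obstacle here; the only conceptual step is recognizing the projection-formula identity $\varphi(F^e_*(s i^q)) = i\, \varphi(F^e_* s)$, which lets $q$-th powers inside the pushforward be extracted as ordinary $R$-multiplications after applying $\varphi$. Everything else is bookkeeping with colon ideals and the definition of $I_e^\varphi(Q)$.
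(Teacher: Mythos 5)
Your proposal is correct and follows essentially the same route as the paper: the first inclusion from $I_e(Q) \subseteq I_e^\varphi(Q)$, and the second from the projection formula $\varphi(F^e_*(s\, i^q)) = i\,\varphi(F^e_* s)$ combined with the definition of $I_e^\varphi(Q)$. The only difference is cosmetic --- the paper runs the same computation at the level of ideals ($\varphi(F^e_* r)J = \varphi(F^e_*(rJ^{[q]})) \subseteq \varphi(F^e_*(I_e^\varphi(Q)))\,I \subseteq QI$) rather than element by element.
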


\begin{proof}
	The first inclusion is trivial since $I_e(Q)\subseteq I_e^\varphi(Q)$. Let $r \in \left( I^\varphi_e(Q) I^{[q]} : J^{[q]} \right)$. Then
\[\varphi(F^e_*r) J = \varphi(F^e_*(r J^{[q]})) \subseteq \varphi\left(F^e_*(I^\varphi_e(Q) I^{[q]})\right) = \varphi\left( F^e_*(I^\varphi_e(Q)) \right) I \subseteq QI. \qedhere \]
\end{proof}

In the regular setting, Frobenius powers preserve associated primes, an idea which plays an important role in \cite{comparison}. This is no longer true in general. However, if we replace Frobenius powers with $I_e(-)$ ideals, then the analogous statement holds.

\begin{lemma}
\label{lem.AssIeQisQ}
For any ideal $Q\subseteq R$, we have $\Ass(R/I_e(Q)) \subseteq \Ass(R/Q)$ for all $e > 0$. Moreover, if $R$ is $F$-pure then $\Ass(R/I_e(Q)) = \Ass(R/Q)$.
\end{lemma}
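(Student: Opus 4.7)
The plan is to reduce to the local case and then argue directly in each part. Since $R$ is $F$-finite, $F^e_*R$ is finitely presented, so $\Hom$-formation commutes with localization and consequently $I_e(Q)R_P=I_e(QR_P)$ for any prime $P$. Because $F$-purity localizes (a splitting of Frobenius restricts to a splitting over $R_P$) and associated primes can be detected after localization at $P$, the theorem reduces to two claims in which $(R,\m)$ is local and $P=\m$: (a) if $\m\in\Ass(R/I_e(Q))$, then $\m\in\Ass(R/Q)$; and (b) if $R$ is additionally $F$-pure and $\m\in\Ass(R/Q)$, then $\m\in\Ass(R/I_e(Q))$.

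For (a), I would pick $r\notin I_e(Q)$ with $\m r\subseteq I_e(Q)$ and choose $\varphi\in\Hom_R(F^e_*R,R)$ with $t:=\varphi(F^e_*r)\notin Q$, which exists by the very definition of $I_e(Q)$. Because $I_e(Q)$ is an ideal, for every $x\in\m$ one has $x^{p^e}r=x^{p^e-1}(xr)\in I_e(Q)$, so applying $\varphi$ and using $R$-linearity gives
\[
xt \;=\; x\,\varphi(F^e_*r)\;=\;\varphi(F^e_*(x^{p^e}r))\;\in\;Q.
\]
Thus $\m t\subseteq Q$ with $t\notin Q$, so $\m\in\Ass(R/Q)$. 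For (b), I would pick $t\notin Q$ with $\m t\subseteq Q$; since $Q\subseteq I_e(Q)$, it suffices to show $t\notin I_e(Q)$. Using $F$-purity, fix a splitting $\varphi\colon F^e_*R\to R$ of the Frobenius, i.e., $\varphi(F^e_*1)=1$, and define
\[
\varphi'\in\Hom_R(F^e_*R,R),\qquad \varphi'(F^e_*x):=\varphi(F^e_*(t^{p^e-1}x)),
\]
which is $R$-linear as the composition of $\varphi$ with multiplication by $F^e_*t^{p^e-1}$ in the $F^e_*R$-module $F^e_*R$. A direct calculation then gives $\varphi'(F^e_*t)=\varphi(F^e_*t^{p^e})=t\cdot\varphi(F^e_*1)=t\notin Q$, so $t\notin I_e(Q)$ as required.

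The main obstacle is part (b): one must exhibit a \emph{single} Frobenius-linear map separating $t$ from $Q$. The trick is to twist the splitting of Frobenius by $F^e_*t^{p^e-1}$ inside the $F^e_*R$-module $\Hom_R(F^e_*R,R)$, producing a $\varphi'$ whose value at $F^e_*t$ equals $t$; this is where $F$-purity is used in an essential way, since only a splitting (as opposed to some nonzero map) makes this computation close. Part (a) is, by contrast, almost formal once one notices that $I_e(Q)$ is an ideal, which lets the relation $xr\in I_e(Q)$ be promoted to $x^{p^e}r\in I_e(Q)$.
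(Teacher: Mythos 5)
Your argument is correct, and it takes a genuinely different route from the paper. The paper deduces both inclusions from external results of Polstra--Smirnov: the identity $\left( I_e(Q) : (P^N)^{[q]} \right) = I_e\left( Q : P^N \right)$ yields $\Ass(R/I_e(Q)) \subseteq \Ass(R/Q)$, and the injectivity of $I \mapsto I_e(I)$ on ideals of an $F$-pure ring yields the reverse inclusion. You instead localize at the prime in question and argue by hand: in one direction you promote a witness $r$ for $\m \in \Ass(R/I_e(Q))$ to the witness $t = \varphi(F^e_*r)$ for $\m \in \Ass(R/Q)$ via the observation that $x^{p^e}r = x^{p^e-1}(xr) \in I_e(Q)$ for $x \in \m$; in the other you twist a Frobenius splitting by $F^e_*t^{p^e-1}$ to manufacture a map separating $t$ from $I_e(Q)$. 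Both computations check out --- in particular $\varphi' = F^e_*t^{p^e-1}\cdot\varphi$ is $R$-linear and satisfies $\varphi'(F^e_*t) = t$ --- and the splitting is used exactly where the $F$-purity hypothesis sits, as you note. What your approach buys is a self-contained, elementary proof that makes transparent where each hypothesis enters; what it costs is the preliminary reduction, and there you should justify (or cite) the equality $I_e(Q)R_P = I_e(QR_P)$: the inclusion $I_e(QR_P) \subseteq I_e(Q)R_P$ needs a finite generating set of $\Hom_R(F^e_*R,R)$ and a clearing of denominators, not merely the compatibility of $\Hom$ with localization for finitely presented modules. A pleasant side effect of working locally is that the implication ``$P$ not associated implies $(I : (P^N)^{[q]}) = I$,'' which in general requires $P$ to avoid every associated prime rather than merely fail to be one, becomes automatic because $P$ is the maximal ideal.
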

\begin{proof}
First, note that since $Q^{[q]}\subseteq I_e(Q)$ for $q=p^e$, associated primes of $I_e(Q)$ contain $Q^{[q]}$ and hence contain $Q$. Now given a prime $P \supseteq Q$, if $P \notin \Ass(R/Q)$ then $(Q:P) = Q$. Hence by \cite[Lemma 2.3 (7)]{PolstraSmirnov},
$$\left( I_e(Q) : \left( P \right)^{[q]} \right) = I_e (Q : P) = I_e(Q).$$
This implies $P\notin \Ass(R/I_e(Q))$ and hence $\Ass(R/I_e(Q)) \subseteq \Ass(R/Q)$.

Conversely, if $P\supseteq Q$ and $P\notin \Ass(R/I_e(Q))$, then we have
$$I_e(Q) = \left( I_e (Q) : P^{[q]} \right) = I_e \left( Q : P \right).$$
By \cite[Corollary 2.4]{PolstraSmirnov}, this implies $\left( Q : P \right) = Q$ when $R$ is $F$-pure, and hence $P \notin \Ass(R/Q)$. Thus $\Ass(R/I_e(Q)) = \Ass(R/Q)$ when $R$ is $F$-pure.
\end{proof}

\begin{definition}
Let $I\subseteq R$ be an ideal and $W$ be a multiplicative system. We denote by $I^W$ those elements of $R$ whose images in $W^{-1}R$ are contained in $W^{-1}I$.
\end{definition}

\begin{lemma}
\label{lem.AssPrimeFinitePdim}
Let $Q\subseteq R$ be an ideal such that $\pd (R/Q) < \infty$. Then $(Q^{[q]})^W=Q^{[q]}$ for all $q=p^e$, where $W$ denotes the multiplicative system $R-\cup_{P \in \Ass(R/Q)} P$. Moreover, if $Q$ is radical then we also have $I_e(Q)=Q^{[q]}$ for all $q=p^e$. 
\end{lemma}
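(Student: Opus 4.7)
The plan is to handle the two claims separately. For the first, I would prove the stronger statement $\Ass(R/Q^{[q]}) \subseteq \Ass(R/Q)$, which immediately yields $(Q^{[q]})^W = Q^{[q]}$, using Peskine-Szpiro and the Auslander-Buchsbaum formula. For the second, I would localize at each $P \in \Min(Q)$, observe that $R_P$ is regular by Auslander-Buchsbaum-Serre, and reduce to Fedder's original lemma in the regular setting.

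For the first claim, pick $P \in \Ass(R/Q^{[q]})$ and localize at $P$. Since $\pd_{R_P}(R_P/Q_P) < \infty$, Peskine-Szpiro ensures that the minimal free $R_P$-resolution of $R_P/Q_P$ remains acyclic after the Frobenius base-change functor, yielding a free $R_P$-resolution of $R_P/(Q_P)^{[q]}$ of the same length. Thus $\pd_{R_P}(R_P/(Q_P)^{[q]}) = \pd_{R_P}(R_P/Q_P)$, and the Auslander-Buchsbaum formula forces $\Depth(R_P/Q_P) = \Depth(R_P/(Q_P)^{[q]}) = 0$, so $P \in \Ass(R/Q)$ as desired.

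For the second claim, assume $Q$ is radical. The short exact sequence $0 \to I_e(Q)/Q^{[q]} \to R/Q^{[q]} \to R/I_e(Q) \to 0$ together with the first claim gives $\Ass(I_e(Q)/Q^{[q]}) \subseteq \Ass(R/Q^{[q]}) \subseteq \Ass(Q) = \Min(Q)$, so it suffices to verify $(I_e(Q))_P = (Q^{[q]})_P$ for each $P \in \Min(Q)$. For such $P$, since $Q$ is radical, $Q_P = PR_P$ is the maximal ideal of $R_P$; since $\pd_{R_P}(R_P/PR_P) < \infty$, Auslander-Buchsbaum-Serre guarantees $R_P$ is regular. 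Fedder's original lemma then gives $I_e(PR_P) = (PR_P)^{[q]}$ in $R_P$, and commuting $I_e$ with localization (which uses $F$-finiteness) completes the argument.

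The main technical obstacle will be justifying the commutation $(I_e(Q))_P = I_e(Q_P)$. Since $R$ is $F$-finite, $F^e_*R$ is finitely presented over $R$ and $\Hom_R(F^e_*R, R)$ is compatible with localization, so membership in $I_e(Q)$ amounts to finitely many $R$-linear conditions that can be checked after localization. The one subtlety is that an $R$-linear map does not let us move an arbitrary scalar across $F^e_*$; however, $q$-th powers of scalars can be pulled across via $a \cdot F^e_*x = F^e_*(a^q x)$, and this is enough to convert local conditions back to global statements.
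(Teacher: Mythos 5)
Your proposal is correct and follows essentially the same route as the paper: the paper cites Hochster--Huneke's Lemma 2.2 (whose proof is exactly your Peskine--Szpiro plus Auslander--Buchsbaum argument) to get $\Ass(R/Q^{[q]})=\Ass(R/Q)$, then reduces the second claim to the localizations at the (minimal) associated primes of $Q$, where $R_P$ is regular and Fedder's $I_e(J)=J^{[q]}$ applies. The only cosmetic difference is that the paper invokes $\Ass(R/I_e(Q))\subseteq\Ass(R/Q)$ from its Lemma 2.7 to justify the reduction, whereas you use the submodule $I_e(Q)/Q^{[q]}$ of $R/Q^{[q]}$; both are valid, and your extra care with $I_e(-)$ commuting with localization fills in a step the paper takes for granted.
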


\begin{proof}
By \cite[Lemma 2.2]{comparison}, $\Ass(R/Q)=\Ass(R/Q^{[q]})$ since $\pd (R/Q)<\infty$. The first assertion follows immediately from this.

For the second assertion, notice that $Q^{[q]}\subseteq I_e(Q)$ and $\Ass(R/Q^{[q]}) = \Ass(R/Q)$ by \cite[Lemma 2.2]{comparison}, thus to show $Q^{[q]}=I_e(Q)$ it is enough to check that $Q^{[q]}=I_e(Q)$ after localizing at each $P\in\Ass(R/Q^{[q]})= \Ass(R/Q)$. But since $\pd(R/Q)<\infty$ and $Q$ is radical, $R_P$ is regular and so $I_e(Q)R_P=I_e(QR_P)=Q^{[q]}R_P$.  
\end{proof}

We also need the following lemma which is a version of \cite[Lemma 2.6]{GrifoHuneke} in the non-regular setting, where again we replace Frobenius powers with $I_e(-)$ ideals.

\begin{lemma}\label{q lemma for I_e}
Let $Q\subseteq R$ be an ideal and let $h$ be the largest minimal number of generators of $QR_P$ for all $P\in\Ass(R/Q)$. Then for every $n\geq 1$ and $q=p^e$ we have
$$Q^{(q(h+n-1)-h+1)} \subseteq I_e(Q^{(n)}).$$
\end{lemma}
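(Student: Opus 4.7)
The plan is to work locally at each associated prime of $Q$. Take $r \in Q^{(q(h+n-1)-h+1)}$ and any $\varphi \in \Hom_R(F^e_*R, R)$; since $Q^{(n)}$ is defined as $\bigcap_{P\in\Ass(Q)} Q^nR_P \cap R$, to show $\varphi(F^e_*r) \in Q^{(n)}$ it suffices to show that $\varphi(F^e_*r) \in Q^nR_P$ for each $P\in\Ass(Q)$.

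Fix such a $P$. Since $(F^e_*R)_P = F^e_*R_P$, localization produces $\varphi_P \in \Hom_{R_P}(F^e_*R_P, R_P)$, and $r/1$ lies in $(QR_P)^{q(h+n-1)-h+1}$. By hypothesis, $QR_P$ is generated by at most $h$ elements $f_1, \ldots, f_h$, so $r/1$ can be expressed as an $R_P$-linear combination of monomials $f_1^{a_1}\cdots f_h^{a_h}$ with $\sum a_i = m := q(h+n-1)-h+1$.

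The key (and essentially only) computation is a pigeonhole estimate on each such monomial. Writing $a_i = qb_i + c_i$ with $0 \leq c_i \leq q-1$, we get $\sum c_i \leq h(q-1)$, hence
\[
q\sum b_i \;=\; m - \sum c_i \;\geq\; q(h+n-1) - h + 1 - h(q-1) \;=\; q(n-1) + 1,
\]
which forces $\sum b_i \geq n$. Therefore $f_1^{a_1}\cdots f_h^{a_h} = (f_1^{b_1}\cdots f_h^{b_h})^q \cdot f_1^{c_1}\cdots f_h^{c_h}$ lies in $\bigl((QR_P)^n\bigr)^{[q]}$, and so $r/1 \in ((QR_P)^n)^{[q]}$.

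Applying $\varphi_P$ finishes the argument: by $R_P$-linearity of $\varphi_P$ on $F^e_*R_P$, for $x \in (QR_P)^n$ and $y\in R_P$ we have $\varphi_P(F^e_*(y x^q)) = x\,\varphi_P(F^e_* y) \in (QR_P)^n$, so $\varphi_P(F^e_*(r/1)) \in Q^nR_P$. Since this holds for every $P\in\Ass(Q)$, we conclude $\varphi(F^e_*r) \in Q^{(n)}$, proving $r \in I_e(Q^{(n)})$. I do not anticipate any real obstacle here beyond the combinatorial check above; the argument is a mild elaboration of \cite[Lemma 2.6]{GrifoHuneke}, with $I_e(-)$ in place of Frobenius powers and with the local number of generators of $QR_P$ in place of the big height (which coincide when the localizations are regular, e.g.\ when $Q$ is a radical ideal of finite projective dimension).
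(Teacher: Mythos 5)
Your proof is correct, and the heart of it --- localize at each $P\in\Ass(Q)$, use that $QR_P$ needs at most $h$ generators, and run the pigeonhole count $q\sum b_i \geq q(h+n-1)-h+1-h(q-1)=q(n-1)+1$ --- is exactly the computation the paper delegates to \cite[Lemma 2.4]{comparison}. Where you genuinely diverge is in how the problem is reduced to the local statement. The paper first invokes Lemma \ref{lem.AssIeQisQ} to get $\Ass(R/I_e(Q^{(n)}))\subseteq \Ass(R/Q^{(n)})=\Ass(R/Q)$, so that the containment of ideals $Q^{(q(h+n-1)-h+1)}\subseteq I_e(Q^{(n)})$ can be checked after localizing at $\Ass(Q)$; that lemma in turn rests on the Polstra--Smirnov results quoted in Section 2, and the paper also uses that $I_e(-)$ commutes with localization. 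You instead unwind the definition of $I_e(Q^{(n)})$: for a fixed $r$ and a fixed $\varphi$ you verify $\varphi(F^e_*r)\in Q^nR_P$ at each $P\in\Ass(Q)$, which suffices because $Q^{(n)}$ is \emph{by definition} the intersection of the contractions of $Q^nR_P$ over $P\in\Ass(Q)$. This only needs the compatibility $(F^e_*R)_P\cong F^e_*(R_P)$ and the standard fact $\varphi(F^e_*(I^{[q]}))\subseteq I$, so your route is more elementary and self-contained; what the paper's route buys is a clean statement at the level of ideals ($\Ass(R/I_e(-))\subseteq\Ass(R/-)$) that is reused elsewhere (e.g.\ in Lemma \ref{lem.AssPrimeFinitePdim}). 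Both arguments are valid.
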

\begin{proof}
By Lemma \ref{lem.AssIeQisQ}, $\Ass \left( R/ I_e(Q^{(n)}) \right) \subseteq \Ass \left( R / Q^{(n)} \right)$, and $\Ass \left( R / Q^{(n)} \right) = \Ass \left( R / Q \right)$ by definition of $Q^{(n)}$. Thus it is enough to show that the inclusion holds after localizing at associated primes of $Q$. But after localizing at $P \in \Ass(R/Q)$, ordinary and symbolic powers of $Q$ are the same and by assumption we have that $Q$ is now generated by $h$ elements. Thus by \cite[Lemma 2.4]{comparison} (see also \cite[Lemma 2.5]{GrifoHuneke}), we have
\[Q^{q(h+n-1)-h+1}R_P\subseteq (Q^n)^{[q]}R_P=(Q^{(n)})^{[q]}R_P\subseteq I_e(Q^{(n)}R_P)=I_e(Q^{(n)})R_P. \qedhere\]
\end{proof}

\section{Ideals of finite projective dimension}

The goal of this section is to give Fedder-type criteria for $F$-purity and strong $F$-regularity of quotients of $F$-finite Gorenstein rings and to apply these to obtain symbolic power containments similar to \cite{GrifoHuneke}. 

\subsection{A generalization of Fedder's criterion}

The following lemma on lifting Cartier maps is the key ingredient we will use to extend Fedder's criterion to singular ambient rings. We believe this lemma is also of independent interest.

\begin{lemma}
\label{lem.LiftMap}
Let $R$ be a local $F$-finite Gorenstein ring and let $Q\subseteq R$ be an ideal of finite projective dimension. Then any map $\phi\in \Hom_{R/Q}(F^e_*(R/Q), R/Q)$ lifts to a map $\widetilde{\phi}\in \Hom_R(F^e_*R, R)$.
\end{lemma}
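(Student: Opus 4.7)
The plan is to reduce the lifting problem to a vanishing of $\Ext^1$. Given $\phi\colon F^e_*(R/Q)\to R/Q$, precomposing with the canonical surjection $\pi\colon F^e_*R\twoheadrightarrow F^e_*(R/Q)$ yields a map $\phi\circ\pi \in \Hom_R(F^e_*R,R/Q)$. If I can lift $\phi\circ\pi$ to some $\widetilde{\phi}\in \Hom_R(F^e_*R,R)$ along the quotient $R\to R/Q$, then $\widetilde{\phi}$ automatically carries $F^e_* Q$ into $Q$: its reduction modulo $Q$ equals $\phi\circ\pi$, which vanishes on $F^e_* Q$. Consequently $\widetilde{\phi}$ descends to the original $\phi$ on $R/Q$. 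Applying $\Hom_R(F^e_*R,-)$ to $0\to Q\to R\to R/Q\to 0$ produces the exact sequence
\[\Hom_R(F^e_*R,R)\to \Hom_R(F^e_*R,R/Q)\to \Ext^1_R(F^e_*R,Q),\]
so it suffices to show $\Ext^1_R(F^e_*R,Q)=0$.

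The first input is that $F^e_*R$ is a maximal Cohen--Macaulay $R$-module: if $x_1,\ldots,x_d$ is a regular system of parameters of $R$, it acts on $F^e_*R$ via its $p^e$-th powers $x_1^{p^e},\ldots,x_d^{p^e}$, and powers of a regular sequence form a regular sequence, so $\depth_R(F^e_*R)=\dim R$. Because $R$ is Gorenstein, the standard characterization of MCM modules via local duality then yields $\Ext^i_R(F^e_*R,R)=0$ for all $i\geq 1$.

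The second input is the finite projective dimension hypothesis, which I would use to bootstrap the previous vanishing. By induction on $n=\pd_R N$ I claim $\Ext^i_R(F^e_*R,N)=0$ for all $i\geq 1$ and every finitely generated $N$ with $\pd_R N<\infty$. The base case $n=0$ follows from the previous paragraph after taking direct summands. For the inductive step, take a presentation $0\to N'\to R^k\to N\to 0$ with $\pd_R N' = n-1$; the long exact $\Ext$ sequence, together with the inductive vanishings $\Ext^i_R(F^e_*R,R^k)=0$ and $\Ext^{i+1}_R(F^e_*R,N')=0$, forces $\Ext^i_R(F^e_*R,N)=0$ for all $i\geq 1$. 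Applying this with $N=Q$, which has finite projective dimension since $\pd_R(R/Q)<\infty$, gives $\Ext^1_R(F^e_*R,Q)=0$ and completes the lifting.

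The conceptual crux, more than any technical hurdle, is recognizing how the two hypotheses interact through the MCM property of $F^e_*R$: the Gorenstein condition makes $F^e_*R$ cohomologically trivial against $R$, and the finite projective dimension of $R/Q$ is precisely what propagates that triviality from $R$ to the submodule $Q$. Neither hypothesis is dispensable, since without them $\Ext^1_R(F^e_*R,Q)$ is generically nonzero and the lemma fails.
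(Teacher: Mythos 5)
Your proof is correct, but it takes a genuinely different route from the paper's. The paper argues by Grothendieck duality: it dualizes $\phi$ into a map of complexes $G_\bullet^* \to F^e_*(G_\bullet^*)$ built from a finite free resolution of $R/Q$, dualizes back, and reads off the lift as the induced map on zeroth cohomology (with an alternative derived-category argument in Remark \ref{rmk.Sri}). You instead identify the obstruction to lifting as $\Ext^1_R(F^e_*R, Q)$ and kill it directly: $F^e_*R$ is maximal Cohen--Macaulay because a system of parameters of $R$ --- a regular sequence since $R$ is Cohen--Macaulay (note: not a \emph{regular} system of parameters, which a singular $R$ does not have) --- acts on $F^e_*R$ through $p^e$-th powers; Gorensteinness then gives $\Ext^{>0}_R(F^e_*R,R)=0$, and an induction on projective dimension propagates this vanishing to $Q$. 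Both proofs ultimately rest on the same two pillars, namely $\Ext^{>0}_R(F^e_*R,R)=0$ (equivalent to $\mathbf{R}\Hom_R(F^e_*R,R)\cong \Hom_R(F^e_*R,R)$, which is where Gorenstein enters) and the finiteness of $\pd(R/Q)$, but your argument is more elementary --- only long exact sequences and depth counting --- and it runs parallel to the paper's own treatment of the infinite-projective-dimension case in Lemma \ref{lem.LiftingMapJacobian}, where the same exact sequence $\Hom_R(F^e_*R,R)\to\Hom_R(F^e_*R,R/Q)\to\Ext^1_R(F^e_*R,Q)$ appears and the $\Ext^1$ is merely annihilated by the Jacobian rather than shown to vanish. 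What the paper's duality formulation buys is a description of the lift that is compatible with the derived-category identifications used elsewhere; what yours buys is transparency about exactly where each hypothesis is used and why the lemma fails without them.
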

\begin{proof}
Fix $\phi: F^e_*(R/Q)\to R/Q$. Consider its Grothendieck dual, and note that since $R$ is local Gorenstein, $\omega_R^\bullet\cong R$:
$$\phi^\vee: \mathbf{R}\Hom_R(R/Q, R)\to \mathbf{R}\Hom_R(F^e_*(R/Q), R).$$
Given a finite free resolution of $R/Q$,
$$G_\bullet: 0\to R^{n_s}\to R^{n_{s-1}}\to\cdots\to R^{n_1}\to R\to 0,$$
we have $\mathbf{R}\Hom_R(R/Q, R)\cong G_\bullet^*$, and by duality $\mathbf{R}\Hom_R(F^e_*(R/Q), R)\cong F^e_*(G_\bullet^*)$.  Since $G_\bullet^*$ is a finite complex of free $R$-modules, $\phi^\vee$ can be realized as an honest map of chain complexes: $G_\bullet^*\to F^e_*(G_\bullet^*)$. That is:
\[\xymatrix{
0 & {R^{n_s}}^*\ar[l]\ar[d] & {R^{n_{s-1}}}^* \ar[l]\ar[d] & \cdots\ar[l] & {R^{n_1}}^*\ar[l]\ar[d] & R^*\ar[l]\ar[d] & 0\ar[l]  \\
0 & F^e_*({R^{n_s}}^*)\ar[l] & F^e_*({R^{n_{s-1}}}^*)\ar[l] & \cdots \ar[l] & F^e_*({R^{n_1}}^*)\ar[l] & F^e_*(R^*)\ar[l] & 0\ar[l]
}.
\]
Now we apply $\Hom_R(-, R)$ to this diagram, we obtain:
\[\xymatrix{
0 \ar[r] & {R^{n_s}}\ar[r] & {R^{n_{s-1}}} \ar[r] & \cdots\ar[r] & {R^{n_1}}\ar[r] & R \ar@{.>}[r] & R/Q \ar@{.>}[r] & 0  \\
0 \ar[r] & F^e_*{R^{n_s}}\ar[r] \ar[u] & F^e_*{R^{n_{s-1}}}\ar[r] \ar[u] & \cdots \ar[r] & F^e_*{R^{n_1}} \ar[r]\ar[u] & F^e_*R \ar@{.>}[r]\ar[u]^{\widetilde{\phi}} & F^e_*(R/Q) \ar[u]^{\phi} \ar@{.>}[r] & 0
}.
\]

We claim that this diagram represents $\phi$ viewed as a map in $\Hom_{D(R)}(F^e_*(R/Q), R/Q)$. Since the first row is a complex of finite free modules, applying $\Hom_R(-, R)$ is the same as applying $\mathbf{R}\Hom_R(-, R)$. For the second row, note that by duality $$\mathbf{R}\Hom_R(F^e_*R, R)\cong F^e_*R\cong \Hom_R(F^e_*R, R)\cong h^0(\mathbf{R}\Hom_R(F^e_*R, R))$$ and hence $F^e_*R$ is acyclic when applying $\mathbf{R}\Hom_R(-, R)$. Therefore, applying $\Hom_R(-, R)$ is also the same as applying $\mathbf{R}\Hom_R(-, R)$ to the second row (i.e., they represent the same object in the derived category). This shows the diagram represents $\phi^{\vee\vee}=\phi$ in the derived category. In particular, $\phi$ is exactly the induced map on the zeroth cohomology of this diagram. Therefore the map $\widetilde{\phi}$ is a lift of $\phi$ to $\Hom_R(F^e_*R, R)$ as desired.
\end{proof}

\begin{remark}
\label{rmk.Sri}
Srikanth Iyengar communicated to us an alternative proof of Lemma \ref{lem.LiftMap}, which we sketch here. Since $R$ is $F$-finite Gorenstein and local, $\Hom_R(F_*^eR, R)\cong \mathbf{R}\Hom_R(F^e_*R, R)$\footnote{In fact, it is well-known that under mild assumptions, the property that $\Hom_R(F_*^eR, R)\cong \mathbf{R}\Hom_R(F^e_*R, R)$ for all $e$ characterizes Gorenstein rings, see \cite{Herzog1974}.} and thus $$\Hom_R(F^e_*R, R)\twoheadrightarrow \Hom_R(F^e_*R, R)\otimes_R(R/Q)\cong h^0(\mathbf{R}\Hom_R(F^e_*R, R)\otimes^\mathbf{L}_R(R/Q)).$$
Since $\pd(R/Q)<\infty$, we know that
$$\mathbf{R}\Hom_R(F^e_*R, R)\otimes^\mathbf{L}_R(R/Q)\cong \mathbf{R}\Hom_R(F^e_*R, R/Q)\cong \mathbf{R}\Hom_{R/Q}(F^e_*R\otimes^\mathbf{L}_R(R/Q), R/Q).$$
By \cite[Th\'{e}or\`{e}me (I.7)]{P-S}, $F^e_*R\otimes^\mathbf{L}_R(R/Q)\cong F^e_*R\otimes_R(R/Q)$ since $\pd(R/Q)<\infty$. Now as $F^e_*R\otimes_R(R/Q)\twoheadrightarrow F^e_*(R/Q)$, we have
\[
\begin{array}{rl}
& \Hom_{R/Q}(F^e_*(R/Q), R/Q)\\
\hookrightarrow& \Hom_{R/Q}(F^e_*R\otimes_R(R/Q), R/Q)\\
=& h^0(\mathbf{R}\Hom_{R/Q}(F^e_*R\otimes^\mathbf{L}_R(R/Q), R/Q)).
\end{array}
\]
Putting all these together we find that
$$\Hom_R(F_*^eR, R)\twoheadrightarrow h^0(\mathbf{R}\Hom_R(F^e_*R, R)\otimes^\mathbf{L}_R(R/Q))\hookleftarrow \Hom_{R/Q}(F^e_*(R/Q), R/Q),$$
which is precisely saying that any $\phi\in \Hom_{R/Q}(F^e_*(R/Q), R/Q)$ comes from a map $\widetilde{\phi}\in \Hom_R(F_*^eR, R)$ under the natural restriction map.
\end{remark}

\begin{thm}
\label{thm.LiftFpure}
Let $R$ be a local $F$-finite Gorenstein ring and let $Q\subseteq R$ be an ideal such that $\pd(R/Q)<\infty$.
\begin{enumerate}
  \item If $R/Q$ is $F$-pure, then $\Phi_e(F^e_*(I_e(Q):Q))=R$. In particular, $R$ is $F$-pure.
  \item If $R/Q$ is strongly $F$-regular, then for any $c \notin Q$ there exists $e_0$ such that for all $e>e_0$, $\Phi_e\left(F^e_*(c(I_e(Q):Q))\right)=R$. In particular, $R$ is strongly $F$-regular.
\end{enumerate}
\end{thm}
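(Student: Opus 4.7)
The strategy is to translate $F$-splitting data from $R/Q$ to $R$ using the lifting lemma (Lemma~\ref{lem.LiftMap}), then exploit the Gorenstein hypothesis to realize every lifted map as multiplication by a single element via the trace, and finally use Lemma~\ref{lem.MapRestrict} to force that element into $(I_e(Q):Q)$.

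For part (1), $F$-purity of $R/Q$ yields $\phi \in \Hom_{R/Q}(F^e_*(R/Q), R/Q)$ with $\phi(F^e_*\overline{1}) = \overline{1}$. Applying Lemma~\ref{lem.LiftMap} produces a lift $\widetilde{\phi} \in \Hom_R(F^e_*R, R)$. Since $R$ is local $F$-finite Gorenstein, $\Hom_R(F^e_*R, R) = F^e_*R \cdot \Phi_e$, so $\widetilde{\phi}(-) = \Phi_e(F^e_*(r \cdot -))$ for some $r \in R$. Lemma~\ref{lem.MapRestrict} forces $r \in (I_e(Q):Q)$, and the lifting property gives $\Phi_e(F^e_*r) = \widetilde{\phi}(F^e_*1) \equiv \phi(F^e_*\overline{1}) = 1 \pmod Q$, so $\Phi_e(F^e_*r) \in 1 + \m$ is a unit in $R$. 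This immediately yields $\Phi_e(F^e_*(I_e(Q):Q)) = R$; normalizing $\widetilde{\phi}$ by this unit produces a Frobenius splitting of $R$, showing $R$ is $F$-pure.

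Part (2) proceeds analogously. For $c$ outside every minimal prime of $Q$, the image $\bar c$ avoids every minimal prime of $R/Q$, so strong $F$-regularity of $R/Q$ provides a $\phi$ with $\phi(F^e_*\bar c) = \bar 1$. Lifting as above, I obtain $r \in (I_e(Q):Q)$ with $\Phi_e(F^e_*(rc)) \equiv 1 \pmod Q$ a unit, so $rc \in c(I_e(Q):Q)$ gives $\Phi_e(F^e_*(c(I_e(Q):Q))) = R$. The statement for all $e > e_0$ (rather than just some $e$) follows from the standard observation that composing a splitting at level $e$ with Frobenius yields splittings at every higher level. To conclude $R$ itself is strongly $F$-regular, I would choose by prime avoidance an element $d$ outside every prime in $\Min(R) \cup \Min(Q)$; the conclusion applied to $d$ yields a splitting witnessing $d$, and combined with the $F$-purity of $R$ already established, standard arguments upgrade this to strong $F$-regularity of $R$.

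The central technical obstacle is the clean realization of the lifted map as $\Phi_e(F^e_*(r \cdot -))$ with $r \in (I_e(Q):Q)$: this hinges on the Gorenstein hypothesis making $\Hom_R(F^e_*R, R)$ a cyclic $F^e_*R$-module and on Lemma~\ref{lem.LiftMap} producing the lift in the first place. Once this identification is secured, translating $\phi(F^e_*\overline{1}) = \overline{1}$ (resp.\ $\phi(F^e_*\bar c) = \bar 1$) into the statement that $\Phi_e(F^e_*r)$ (resp.\ $\Phi_e(F^e_*(rc))$) is a unit modulo $Q$ is immediate from the commutativity of the restriction square.
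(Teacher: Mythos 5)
Your argument for the displayed equalities $\Phi_e(F^e_*(I_e(Q):Q))=R$ and $\Phi_e(F^e_*(c(I_e(Q):Q)))=R$, and for the $F$-purity of $R$ in part (1), is correct and is essentially the paper's own proof: lift the splitting via Lemma~\ref{lem.LiftMap}, write the lift as $\Phi_e(F^e_*(r\cdot -))$ using the Gorenstein hypothesis, invoke Lemma~\ref{lem.MapRestrict} to place $r$ in $(I_e(Q):Q)$, and observe that $\Phi_e(F^e_*r)$ (resp.\ $\Phi_e(F^e_*(rc))$) is congruent to $1$ modulo $Q$, hence a unit in the local ring. You in fact spell out the unit argument more explicitly than the paper does. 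One small caution on the ``for all $e>e_0$'' claim: the composition-with-Frobenius trick should be performed inside $R/Q$ (strong $F$-regularity of $R/Q$ already yields, for each $c$, maps $\phi_e$ with $\phi_e(F^e_*\bar c)=\bar 1$ for all $e\gg 0$), and then each $\phi_e$ is lifted separately; if you instead compose the lifted maps on $R$ you must re-verify that the composite still restricts to $R/Q$.

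There is, however, a genuine gap in your deduction that $R$ is strongly $F$-regular in part (2). Choosing $d$ by prime avoidance outside every prime of $\Min(R)\cup\Min(Q)$ and producing a splitting $\phi(F^e_*d)=1$ does not, even together with $F$-purity of $R$, imply strong $F$-regularity: the standard criterion requires the witnessing element to be a test element, i.e.\ one needs $R_d$ to be (strongly $F$-)regular. Since the theorem's conclusion only applies to elements avoiding the minimal primes of $Q$, the real issue is to produce an element that is simultaneously outside every minimal prime of $Q$ and a test element for $R$. This is where the hypotheses are actually used: because $R/Q$ is strongly $F$-regular, $Q$ is radical, and together with $\pd(R/Q)<\infty$ this forces $R_P$ to be regular for every $P\in\Min(Q)$; since $R$ is $F$-finite (hence excellent) the singular locus is closed and misses $\Min(Q)$, so one can choose $c$ not in any minimal prime of $Q$ with $R_c$ regular. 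Applying the established equality to this particular $c$ then gives a splitting $\phi(F^e_*c)=1$ with $c$ a test element, which is what yields strong $F$-regularity of $R$. Your ``standard arguments upgrade this'' elides exactly this step.
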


\begin{proof}The proof of the two cases are similar and both follow from Lemma \ref{lem.LiftMap}.
\begin{enumerate}[(1)]
	\item By Lemma \ref{lem.LiftMap}, any Frobenius splitting $\phi\in \Hom_{R/Q}(F^e_*(R/Q), R/Q)$ can be lifted to some $\widetilde{\phi}\in \Hom_R(F^e_*R, R)$. Since $R$ is local Gorenstein, $\widetilde{\phi}=\Phi_e(F^e_*r\cdot-)$ for some $r\in \left( I_e(Q):Q \right)$ by Lemma \ref{lem.MapRestrict}. Therefore $\Phi_e(F^e_*(I_e(Q):Q))=R$ and it follows that $R$ is $F$-pure.

\item Since $R/Q$ is strongly $F$-regular, there exists $e_0$ such that for all $e>e_0$, there exists $\phi\in \Hom_{R/Q}(F^e_*(R/Q), R/Q)$ such that $\phi(F^e_*c)=1$. We can lift $\phi$ to $\widetilde{\phi}\in \Hom_R(F^e_*R, R)$ by Lemma \ref{lem.LiftMap}. Since $R$ is local Gorenstein, $\widetilde{\phi}=\Phi_e(F^e_*r\cdot-)$ for some $r\in I_e(Q):Q$ by Lemma \ref{lem.MapRestrict}. Therefore $\Phi_e\left(F^e_*(c(I_e(Q):Q))\right)=R$. Since $R/Q$ is local and strongly $F$-regular, $Q$ is prime ideal. Since $\pd(R/Q)<\infty$, $R_Q$ is regular. Therefore we can pick $c\notin Q$ that is a test element for $R$, and $\Phi_e\left(F^e_*(c(I_e(Q):Q))\right)=R$ implies $R\to F^e_*R$ sending $1$ to $F^e_*c$ splits. Thus $R$ is strongly $F$-regular. \qedhere
\end{enumerate}
\end{proof}

\begin{corollary}[Fedder's criterion in singular rings]
\label{cor.Fedder}
Let $(R,\m)$ be a local $F$-finite Gorenstein ring and $Q\subseteq R$ be an ideal. Consider the following three conditions:
\begin{enumerate}
  \item $\left( Q^{[p^e]}:Q \right) \nsubseteq I_e(\m)$.
  \item $\left( I_e(Q):Q \right) \nsubseteq I_e(\m)$.
  \item $R/Q$ is $F$-pure.
\end{enumerate}
We have $(1)\Rightarrow (2)\Rightarrow (3)$, and moreover $(3)\Rightarrow(1)$ if $\pd(R/Q)<\infty$.
\end{corollary}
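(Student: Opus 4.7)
The plan is to prove the three implications in succession, leveraging Lemma \ref{lem.MapRestrict}, Lemma \ref{lem.AssPrimeFinitePdim}, and Theorem \ref{thm.LiftFpure}. For $(1)\Rightarrow(2)$, I would simply observe that the standing containment $Q^{[p^e]}\subseteq I_e(Q)$ (recorded right after the definition of $I_e(-)$) forces $(Q^{[p^e]}:Q)\subseteq (I_e(Q):Q)$, so any element of the former that avoids $I_e(\m)$ also witnesses (2).

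For $(2)\Rightarrow(3)$, I would pick $r\in (I_e(Q):Q)$ with $r\notin I_e(\m)$. By the definition of $I_e(\m)$ there exists $\varphi\in\Hom_R(F^e_*R,R)$ with $\varphi(F^e_*r)\notin\m$, hence a unit. Since $R$ is local $F$-finite Gorenstein, $\Hom_R(F^e_*R,R)$ is cyclic over $F^e_*R$ generated by the trace $\Phi_e$, so $\varphi=\Phi_e(F^e_*s\cdot-)$ for some $s\in R$ and $u:=\Phi_e(F^e_*(sr))$ is a unit. Because $(I_e(Q):Q)$ is an ideal, $sr$ still lies in it, and Lemma \ref{lem.MapRestrict} lets $\Phi_e(F^e_*(sr)\cdot-)$ descend to a map $\overline\psi\in\Hom_{R/Q}(F^e_*(R/Q),R/Q)$ sending $F^e_*\overline 1$ to $\overline u\in (R/Q)^{\times}$. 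Rescaling by $\overline u^{-1}$ then produces a genuine Frobenius splitting of $R/Q$.

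For $(3)\Rightarrow(1)$ under the hypothesis $\pd(R/Q)<\infty$, $F$-purity of $R/Q$ first forces $R/Q$ to be reduced, so $Q$ is radical. The second assertion of Lemma \ref{lem.AssPrimeFinitePdim} then supplies $I_e(Q)=Q^{[p^e]}$, so $(Q^{[p^e]}:Q)=(I_e(Q):Q)$. Theorem \ref{thm.LiftFpure}(1) delivers $\Phi_e(F^e_*(I_e(Q):Q))=R$, and a hypothetical containment $(I_e(Q):Q)\subseteq I_e(\m)$ would then give $\Phi_e(F^e_*(I_e(Q):Q))\subseteq\Phi_e(F^e_*I_e(\m))\subseteq\m$ directly from the definition of $I_e(\m)$, contradicting the equality with $R$.

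I expect the only genuinely nontrivial move to be the Gorenstein rescaling trick in $(2)\Rightarrow(3)$: the Gorenstein hypothesis is exactly what allows the arbitrary $\varphi$ to be replaced by a scalar multiple of $\Phi_e$, while the fact that $(I_e(Q):Q)$ absorbs multiplication by $s$ is what preserves the descent to $R/Q$. The other two implications amount to bookkeeping once Lemma \ref{lem.AssPrimeFinitePdim} and Theorem \ref{thm.LiftFpure} are in hand.
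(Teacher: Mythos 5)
Your proposal is correct and follows essentially the same route as the paper: the containment $Q^{[p^e]}\subseteq I_e(Q)$ for $(1)\Rightarrow(2)$, Lemma \ref{lem.MapRestrict} together with the Gorenstein cyclicity of $\Hom_R(F^e_*R,R)$ over $F^e_*R$ for $(2)\Rightarrow(3)$, and Theorem \ref{thm.LiftFpure} combined with Lemma \ref{lem.AssPrimeFinitePdim} for $(3)\Rightarrow(1)$. The extra details you supply --- the rescaling by the unit $\overline{u}^{-1}$ and the observation that $F$-purity forces $Q$ to be radical so that the second assertion of Lemma \ref{lem.AssPrimeFinitePdim} applies --- are exactly the steps the paper leaves implicit.
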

\begin{proof}
$(1)\Rightarrow(2)$ since $Q^{[p^e]}\subseteq I_e(Q)$, $(2)\Rightarrow(3)$ by Lemma \ref{lem.MapRestrict}. Finally, when $\pd(R/Q)<\infty$ and $R/Q$ is $F$-pure, we know that $\Phi_e(F^e_*(I_e(Q):Q))=R$ by Theorem \ref{thm.LiftFpure}, so $(I_e(Q):Q)\nsubseteq I_e(\m)$ and hence $(Q^{[p^e]}:Q) \nsubseteq I_e(\m)$ by Lemma \ref{lem.AssPrimeFinitePdim}.
\end{proof}

Note that when $R$ is regular, every ideal has finite projective dimension and $I_e(\m)=\m^{[p^e]}$, so in that setting Corollary \ref{cor.Fedder} is precisely the classical Fedder's criterion \cite{Fedder}. By the same proof, we also have the following, which extends \cite{Glassbrenner}.

\begin{corollary}[Glassbrenner's criterion in singular rings]
\label{cor.Glassbrenner}
Let $(R,\m)$ be a local $F$-finite Gorenstein ring and $Q\subseteq R$ be an ideal. Consider the following three conditions:
\begin{enumerate}
  \item For any $c$ not in any minimal prime of $Q$, there exists $e$ such that $c(Q^{[p^e]}:Q) \nsubseteq I_e(\m)$.
  \item For any $c$ not in any minimal prime of $Q$, there exists $e$ such that $c(I_e(Q):Q) \nsubseteq I_e(\m)$.
  \item $R/Q$ is strongly $F$-regular.
\end{enumerate}
We have $(1)\Rightarrow (2)\Rightarrow (3)$, and moreover $(3)\Rightarrow(1)$ if $\pd(R/Q)<\infty$.
\end{corollary}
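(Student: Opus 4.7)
The plan is to mirror the proof of Corollary \ref{cor.Fedder}, adapting each step to accommodate the auxiliary test element $c$. Throughout, I use that since $R$ is local $F$-finite Gorenstein, every $\phi\in\Hom_R(F^e_*R,R)$ has the form $\Phi_e(F^e_*a\cdot-)$ for a unique $a\in R$, so an element $r\in R$ fails to lie in $I_e(\m)$ if and only if $\Phi_e(F^e_*(ar))$ is a unit for some $a\in R$.

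The implication $(1)\Rightarrow(2)$ is immediate from $Q^{[p^e]}\subseteq I_e(Q)$, which gives $c(Q^{[p^e]}:Q)\subseteq c(I_e(Q):Q)$. For $(2)\Rightarrow(3)$, fix $c$ outside every minimal prime of $Q$. By (2), choose $e$ and $s\in(I_e(Q):Q)$ with $cs\notin I_e(\m)$. By the remark above, pick $a\in R$ with $\Phi_e(F^e_*(asc))$ a unit. Since $(I_e(Q):Q)$ is an ideal of $R$, we have $as\in(I_e(Q):Q)$, so Lemma \ref{lem.MapRestrict} tells us that $\Phi_e(F^e_*(as)\cdot-)$ descends to a map $\bar\phi\in\Hom_{R/Q}(F^e_*(R/Q),R/Q)$, and $\bar\phi(F^e_*\bar c)=\overline{\Phi_e(F^e_*(asc))}$ is a unit in $R/Q$. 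Rescaling produces a splitting $R/Q\to F^e_*(R/Q)$ sending $1$ to $F^e_*\bar c$, which establishes strong $F$-regularity of $R/Q$. For $(3)\Rightarrow(1)$ under the hypothesis $\pd(R/Q)<\infty$, Theorem \ref{thm.LiftFpure}(2) provides, for each $c$ not in a minimal prime of $Q$, some $e$ with $\Phi_e(F^e_*(c(I_e(Q):Q)))=R$, which forces $c(I_e(Q):Q)\nsubseteq I_e(\m)$. Because strong $F$-regularity of $R/Q$ makes $Q$ radical, Lemma \ref{lem.AssPrimeFinitePdim} applies to give $I_e(Q)=Q^{[p^e]}$, upgrading this to $c(Q^{[p^e]}:Q)\nsubseteq I_e(\m)$, which is (1).

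The only genuine subtlety, and the step I would single out as the main (if modest) obstacle, is in $(2)\Rightarrow(3)$: condition (2) is an \emph{existence} statement asserting that \emph{some} Cartier-type operator does not land in $\m$, whereas strong $F$-regularity requires a specific splitting evaluating to $1$ on $F^e_*\bar c$. Bridging the two requires one to replace $s$ by $as$ for a suitably chosen $a$, and the observation that keeps this legal is simply that $(I_e(Q):Q)$ is stable under multiplication by $R$. Once this is noted, Lemma \ref{lem.MapRestrict} converts the Gorenstein-lifted Cartier operator into an actual $R/Q$-linear splitting, and the rest is bookkeeping parallel to Corollary \ref{cor.Fedder}.
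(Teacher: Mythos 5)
Your proof is correct and follows exactly the route the paper intends: the paper proves Corollary \ref{cor.Glassbrenner} by the remark ``by the same proof'' as Corollary \ref{cor.Fedder}, and your argument is precisely that proof adapted to carry the element $c$ through, using Lemma \ref{lem.MapRestrict} for $(2)\Rightarrow(3)$, Theorem \ref{thm.LiftFpure}(2) for $(3)\Rightarrow(2)$, and Lemma \ref{lem.AssPrimeFinitePdim} (valid since strong $F$-regularity of $R/Q$ makes $Q$ radical) to upgrade to $(1)$.
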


\begin{remark}
The above results can also be viewed as a strong sort of inversion of adjunction of $F$-purity (in arbitrary codimension).  It would be natural to ask if analogous results held for log canonical singularities in characteristic zero.  The corresponding statements include statements of the following form.  Suppose that $R$ is a Gorenstein normal domain of finite type over $\mathbb{C}$.
\begin{enumerate}
\item If $R/Q$ has finite projective dimension and is semi-log canonical, then does there exist a $\mathbb{Q}$-divisor on $\Spec R$ such that $(R, \Delta)$ is log canonical with log canonical center $V(Q)$?
\item If $R/Q$ has finite projective dimension and is log terminal, then does there exist a $\mathbb{Q}$-divisor on $\Spec R$ such that $(R, \Delta)$ is log canonical with minimal log canonical center at $V(Q)$?
\end{enumerate}
\end{remark}

\begin{remark}
One cannot expect Lemma \ref{lem.LiftMap} and Theorem \ref{thm.LiftFpure} to hold without the Gorenstein hypothesis on $R$. In \cite{AnuragNotDeform}, Singh constructed examples of Cohen-Macaulay non-Gorenstein $R$ such that $R/(x)$ is strongly $F$-regular for a nonzerodivisor $x$ (so clearly $\pd R/(x)<\infty$) but $R$ is not even $F$-pure. In particular, no Frobenius splitting of $R/(x)$ lifts to $R$.
\end{remark}

\subsection{Applications to symbolic powers}
If $W$ is the multiplicative system $R-\cup_{P\in\Ass(R/I)}P$ (so $W$ consists of all nonzerodivisor on $R/I$), note that $(I^n)^W = I^{(n)}$ is the $n$-th symbolic power of $I$.

\begin{lemma}
\label{lem.ContainmentContraction}
Let $Q\subseteq R$ be an ideal and let $n\geq 2$ be a positive integer. Then for all $q = p^e$ sufficiently large,
$$\left( Q^{(hn-h+1)} \right)^{\left\lfloor \frac{q-1}{n-1} \right\rfloor} \subseteq (Q^{[q]})^W,$$
where $W$ denotes the multiplicative system $R-\cup_{P \in \Ass(R/Q)} P$ and $h$ is the largest analytic spread of $QR_P$ for all $P\in\Ass(R/Q)$.
\end{lemma}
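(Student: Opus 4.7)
The plan is to localize at each $P \in \Ass(Q)$ so that symbolic powers collapse to ordinary powers, then reduce the Frobenius containment to a known Hochster--Huneke-style estimate for ideals with small analytic spread.

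\textbf{First step: reduce to a local problem.} By the definition of the contraction $(-)^W$, we have $x \in (Q^{[q]})^W$ iff $x \in Q^{[q]}R_P$ for every $P \in \Ass(Q)$. On the other hand, the very definition of $Q^{(hn-h+1)}$ given in the introduction forces $Q^{(hn-h+1)}R_P \subseteq Q^{hn-h+1}R_P$ for each such $P$, and hence
\[
\bigl(Q^{(hn-h+1)}\bigr)^{\lfloor (q-1)/(n-1)\rfloor}R_P \;\subseteq\; Q^{s(hn-h+1)}R_P,
\]
where $s := \lfloor (q-1)/(n-1) \rfloor$. So, writing $I := QR_P$, I will prove
\[
I^{s(h(n-1)+1)} \;\subseteq\; I^{[q]} \qquad \text{in } R_P
\]
for each $P \in \Ass(Q)$ and every sufficiently large $q = p^e$. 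Since $\Ass(Q)$ is finite, a single sufficiently large $q$ will work for all of them simultaneously.

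\textbf{Second step: reduction argument.} After the harmless faithfully flat extension $R_P \rightsquigarrow R_P(x) = R_P[x]_{\m R_P[x]}$, I may assume the residue field at $P$ is infinite; note that analytic spread is preserved, so $\ell(I) \leq h$ still holds. Then $I$ admits a minimal reduction $J \subseteq I$ with $\mu(J) \leq h$ and $JI^r = I^{r+1}$ for some integer $r \geq 0$ (the reduction number). Consequently, $I^m = J^{m-r}I^r$ for all $m \geq r$. Because $J$ is generated by at most $h$ elements, a standard pigeonhole argument gives
\[
J^{h(q-1)+1} \;\subseteq\; J^{[q]} \;\subseteq\; I^{[q]},
\]
which is exactly \cite[Lemma~2.4]{comparison}. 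Combining these, whenever $m - r \geq h(q-1)+1$ we have $I^m \subseteq J^{m-r} \subseteq I^{[q]}$.

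\textbf{Third step: matching exponents.} It remains to check that $m := s(h(n-1)+1) = sh(n-1) + s$ exceeds $h(q-1) + r + 1$ for $q \gg 0$. From $s(n-1) \leq q-1 < (s+1)(n-1)$ one gets $s(n-1) \geq q - n + 1$, whence $sh(n-1) \geq h(q-1) - h(n-2)$. Therefore
\[
m \;\geq\; h(q-1) - h(n-2) + s,
\]
so the desired inequality $m \geq h(q-1) + r + 1$ holds provided $s \geq h(n-2) + r + 1$, which is automatic once $q$ is large enough (taking $r$ to be the maximum of the reduction numbers across the finitely many $P \in \Ass(Q)$).

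\textbf{Main obstacle.} The step I expect to require the most care is the passage to an infinite residue field and the invocation of a minimal reduction with exactly $\ell(I)$ generators: this is where the hypothesis uses analytic spread rather than minimal number of generators, and the faithful-flatness argument needs to be compatible with contraction/extension of both ordinary and Frobenius powers. Everything else reduces to the familiar pigeonhole bound $J^{h(q-1)+1} \subseteq J^{[q]}$ together with bookkeeping of the (finitely many) reduction numbers $r_P$.
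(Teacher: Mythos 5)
Your proof is correct and follows essentially the same route as the paper's: localize at the associated primes of $Q$, replace $QR_P$ by a reduction generated by $h$ elements, apply the pigeonhole bound $J^{h(q-1)+1}\subseteq J^{[q]}$, and absorb the reduction number by taking $q\gg 0$. The only differences are presentational — you make the passage to an infinite residue field via $R_P(x)$ explicit and arrange the exponent bookkeeping slightly differently — but the content is identical.
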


\begin{proof}
It suffices to prove this after localizing at $P$ for each $P\in\Ass(R/Q)$. Therefore we may assume that the left hand side is the ordinary power of $Q$ and the right hand side is $Q^{[q]}$, and we reduce to show a containment in $R_P$. Then it is enough to check such a containment in a faithfully flat extension of $R_P$. Thus we may assume that the residue field of $R_P$ is infinite, and therefore we may assume that $Q$ is integral over an ideal $I$ generated by $h$ elements. Now for all large $q$ we write
	$$q-1 = a (n-1) + r,$$
	where $a, r \geqslant 0$ and $r \leqslant n-2$. Then
	$$(hn-h+1) \left\lfloor \frac{q-1}{n-1} \right\rfloor = (hn-h+1)a = h(q-1-r)+a = hq-h+a-rh.$$
	Thus we have
$$Q^{(hn-h+1)\left\lfloor \frac{q-1}{n-1} \right\rfloor}=Q^{hq-h+(a-rh)}.$$
Since $Q$ is integral over $I$, there exists $s$ such that for all $t\geq 1$,
	$$Q^sQ^t=Q^sI^t\subseteq I^t.$$
Now for $q\gg0$ such that $a=\lfloor\frac{q-1}{n-1}\rfloor\geq rh+s+1$ we have
$$Q^{hq-h+(a-rh)}\subseteq Q^sQ^{hq-h+1}\subseteq I^{hq-h+1}\subseteq I^{[q]}\subseteq Q^{[q]},$$ where we use the Pigeonhole Principle for $I^{hq-h+1}\subseteq I^{[q]}$ (since $I$ is generated by $h$ elements).
This finishes the proof.
\end{proof}

\begin{lemma}
\label{lem.I_econtainment1}
	Let $Q\subseteq R$ be an ideal with $\pd(R/Q)<\infty$. Let $h$ be the largest analytic spread of $QR_P$ for all $P\in\Ass(R/Q)$. Then for all $n \geqslant 1$,
	$$\left( I : Q \right) \subseteq \left( I  \left( Q^{n-1} \right)^{[q]} : \left( Q^{(hn-h+1)} \right)^{[q]} \right)$$
	for all $q = p^e \gg 0$ and all ideal $I\subseteq R$.
\end{lemma}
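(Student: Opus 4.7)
The plan is to reduce the stated colon inclusion to the simpler absolute containment
\[(Q^{(hn-h+1)})^{[q]} \subseteq Q \cdot (Q^{n-1})^{[q]} \quad \text{for } q \gg 0.\]
Once this is in hand, for any $r \in (I:Q)$ we have $rQ \subseteq I$, hence
\[r \cdot (Q^{(hn-h+1)})^{[q]} \subseteq rQ \cdot (Q^{n-1})^{[q]} \subseteq I \cdot (Q^{n-1})^{[q]},\]
which is exactly $r \in \bigl( I (Q^{n-1})^{[q]} : (Q^{(hn-h+1)})^{[q]} \bigr)$. The degenerate case $n = 1$ can be dispatched separately: then $(Q^{n-1})^{[q]} = R$ and the claim reduces to $(I:Q) \subseteq (I : Q^{[q]})$, which is immediate from $Q^{[q]} \subseteq Q$.

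To establish the displayed containment for $n \geq 2$, I would observe that $(Q^{(hn-h+1)})^{[q]}$ is generated as an ideal by the elements $a^q$ with $a \in Q^{(hn-h+1)}$. Writing $a^q = a \cdot a^{q-1}$ and using $a \in Q^{(hn-h+1)} \subseteq Q$, it suffices to verify
\[a^{q-1} \in (Q^{n-1})^{[q]} \quad \text{for every } a \in Q^{(hn-h+1)},\]
which, once proved, gives $a^q \in Q \cdot (Q^{n-1})^{[q]}$ for each generator, and hence the containment of ideals.

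The heart of the argument is therefore the membership $a^{q-1} \in (Q^{n-1})^{[q]}$. For this I would invoke Lemma \ref{lem.ContainmentContraction}, which yields
\[(Q^{(hn-h+1)})^{k} \subseteq (Q^{[q]})^{W}, \qquad k = \lfloor (q-1)/(n-1) \rfloor,\]
for all $q \gg 0$, and then apply Lemma \ref{lem.AssPrimeFinitePdim}: since $\pd(R/Q) < \infty$, one has $(Q^{[q]})^{W} = Q^{[q]}$. Raising the resulting inclusion $(Q^{(hn-h+1)})^{k} \subseteq Q^{[q]}$ to the $(n-1)$st power gives $(Q^{(hn-h+1)})^{k(n-1)} \subseteq (Q^{n-1})^{[q]}$, and since $k(n-1) \leq q-1$ by the definition of the floor, the element $a^{q-1}$ lies in $(Q^{(hn-h+1)})^{q-1} \subseteq (Q^{n-1})^{[q]}$ as required. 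I do not foresee a significant obstacle here; the only subtlety is correctly coupling the finite-projective-dimension hypothesis (used solely to collapse $(Q^{[q]})^{W}$ to $Q^{[q]}$ via Lemma \ref{lem.AssPrimeFinitePdim}) with the ordinary-power containment supplied by Lemma \ref{lem.ContainmentContraction}, after which the factorization $a^q = a \cdot a^{q-1}$ closes the argument.
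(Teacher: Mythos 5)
Your proposal is correct and follows essentially the same route as the paper: the case $n=1$ is dispatched trivially, the factorization $a^q = a\cdot a^{q-1}$ reduces everything to $\left(Q^{(hn-h+1)}\right)^{q-1}\subseteq \left(Q^{n-1}\right)^{[q]}=\left(Q^{[q]}\right)^{n-1}$, and that in turn reduces to $\left(Q^{(hn-h+1)}\right)^{\left\lfloor (q-1)/(n-1)\right\rfloor}\subseteq Q^{[q]}$, which is exactly the combination of Lemma \ref{lem.ContainmentContraction} and Lemma \ref{lem.AssPrimeFinitePdim} that the paper invokes. No gaps.
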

\begin{proof}
	If $n=1$, the desired inclusion comes down to $\left( I : Q \right) \subseteq \left( I : Q^{[q]} \right)$ which is trivial. Now we fix $n\geq 2$. Let $t \in \left( I : Q \right)$. Then
	\[
        t \left( Q^{(hn-h+1)} \right)^{[q]} \subseteq (tQ) \cdot \left( Q^{(hn-h+1)} \right)^{q-1} \subseteq I \, \left( Q^{(hn-h+1)} \right)^{q-1}.
    \]
	The result will follow if we show that
	$$\left( Q^{(hn-h+1)} \right)^{q-1} \subseteq \left( Q^{n-1} \right)^{[q]} = \left( Q^{[q]} \right)^{n-1}.$$
Therefore it is enough to prove
		$$\left( Q^{(hn-h+1)} \right)^{\left\lfloor \frac{q-1}{n-1} \right\rfloor} \subseteq  Q^{[q]}.$$
This follows by Lemma \ref{lem.ContainmentContraction} and Lemma \ref{lem.AssPrimeFinitePdim}.
\end{proof}

The next lemma will be crucial to handle the strongly $F$-regular case.
\begin{lemma}
\label{lem.I_econtainment2}
Let $Q\subseteq R$ be an ideal and let $h\geq 2$ be the largest minimal number of generators of $QR_P$ for all $P \in \Ass(R/Q)$. Then for all $n$ and for all $e \gg 0$, we have
$$\left( Q^{n+h-1} : Q^{(n+h-1)} \right) \left( I_e(Q) : Q \right) \subseteq \left( I_e(Q) I_e \left( Q^{(n)} \right) : \left( Q^{(n+h-1)}  \right)^{[q]} \right).$$
In particular, if $Q$ is radical and $\pd(R/Q)<\infty$, then we have
$$\left( Q^{n+h-1} : Q^{(n+h-1)} \right) \left( I_e(Q) : Q \right) \subseteq \left( Q^{[q]} I_e \left( Q^{(n)} \right) : \left( Q^{(n+h-1)}  \right)^{[q]} \right).$$
\end{lemma}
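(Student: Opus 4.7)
The plan is to prove the first containment by a direct element chase, factoring elements of $(Q^{(n+h-1)})^{[q]}$ so as to apply both colon conditions separately; the ``in particular'' statement will then follow from Lemma \ref{lem.AssPrimeFinitePdim}, which gives $I_e(Q) = Q^{[q]}$ under the stated hypotheses.

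Fix $s \in (Q^{n+h-1}:Q^{(n+h-1)})$, $t \in (I_e(Q):Q)$, and $a \in Q^{(n+h-1)}$. Since $(Q^{(n+h-1)})^{[q]}$ is generated as an $R$-module by $q$-th powers of elements of $Q^{(n+h-1)}$, it suffices to prove $sta^q \in I_e(Q) \cdot I_e(Q^{(n)})$. The key rewriting is $sta^q = t\,(sa)\,a^{q-1}$: the colon condition on $s$ gives $sa \in Q^{n+h-1} = Q \cdot Q^{n+h-2}$, so I may write $sa = \sum_i u_i v_i$ with $u_i \in Q$ and $v_i \in Q^{n+h-2}$, which yields
$$
sta^q \;=\; \sum_i (tu_i)\cdot (v_i\, a^{q-1}),
$$
and $tu_i \in I_e(Q)$ directly from $t \in (I_e(Q):Q)$ and $u_i \in Q$.

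To handle the second factor, I use that symbolic-power exponents add:
$$
v_i\, a^{q-1} \;\in\; Q^{(n+h-2)}\cdot (Q^{(n+h-1)})^{q-1} \;\subseteq\; Q^{\bigl(n+h-2 + (q-1)(n+h-1)\bigr)} \;=\; Q^{(q(n+h-1)-1)}.
$$
The hypothesis $h \geq 2$ yields $q(n+h-1)-1 \geq q(n+h-1)-h+1$, so Lemma \ref{q lemma for I_e} places $v_i a^{q-1}$ inside $Q^{(q(n+h-1)-h+1)} \subseteq I_e(Q^{(n)})$. Combining these facts shows $sta^q \in I_e(Q)\cdot I_e(Q^{(n)})$, which completes the first containment; the second then follows by substituting $I_e(Q) = Q^{[q]}$ from Lemma \ref{lem.AssPrimeFinitePdim}.

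The main thing to spot is the identity $sta^q = t(sa)a^{q-1}$: it uses precisely one factor of $a$ to feed the $s$-colon (producing $sa \in Q^{n+h-1}$, which we may split multiplicatively since ordinary powers always factor), so the $t$-colon can absorb a $Q$-factor and the remaining $a^{q-1}$ still carries enough symbolic-power strength to invoke Lemma \ref{q lemma for I_e}. The hypothesis $h \geq 2$ supplies exactly the unit of slack needed at the last step, so no further maneuvering is required.
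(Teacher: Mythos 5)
Your proof is correct and is essentially the paper's argument phrased element‑wise rather than ideal‑wise: both use one copy of $Q^{(n+h-1)}$ to feed the colon $(Q^{n+h-1}:Q^{(n+h-1)})$, split the resulting ordinary power as $Q\cdot Q^{n+h-2}$ to feed the other colon, bound the leftover $Q^{n+h-2}\,(Q^{(n+h-1)})^{q-1}$ by $Q^{(q(n+h-1)-1)}$, and invoke Lemma \ref{q lemma for I_e} via the slack from $h\geq 2$, with the ``in particular'' statement coming from Lemma \ref{lem.AssPrimeFinitePdim}. No gaps.
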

\begin{proof}
Let $s \in \left( I_e(Q) : Q \right)$ and $t \in \left( Q^{n+h-1} : Q^{(n+h-1)} \right)$. Then
\begin{align*}
st \left( Q^{(n+h-1)}  \right)^{[q]} & \subseteq st \left( Q^{(n+h-1)}  \right)^{q} \\
& \subseteq s \left( t Q^{(n+h-1)} \right) \left( Q^{(n+h-1)} \right)^{q-1} \\
& \subseteq (sQ) Q^{n+h-2} \left( Q^{(n+h-1)} \right)^{q-1} \\
& \subseteq I_e(Q) Q^{n+h-2} \left( Q^{(n+h-1)} \right)^{q-1} \\
& \subseteq I_e(Q) Q^{((n+h-1)(q-1) + n + h - 2)}.
\end{align*}
Now note that if $h \geqslant 2$, then
\begin{align*}
(n+h-1)(q-1) + n + h - 2 & = (n+h-1)q -1 \\
& \geqslant (n+h-1)q - h + 1
\end{align*}
so by Lemma \ref{q lemma for I_e},
$$Q^{((n+h-1)(q-1) + n + h - 2)} \subseteq I_e(Q^{(n)}).$$
This shows that
$$st \left( Q^{(n+h-1)}  \right)^{[q]} \subseteq I_e(Q) I_e \left( Q^{(n)} \right),$$
as desired. The last assertion follows from Lemma \ref{lem.AssPrimeFinitePdim}.
\end{proof}

Now we state and prove our main result on symbolic power containments for ideals of finite projective dimension.

\begin{thm}
\label{thm.MainFinitePdim}
Let $R$ be an $F$-finite Gorenstein ring and $Q\subseteq R$ be an ideal with $\pd(R/Q)<\infty$ and big height $h$. Then we have
\begin{enumerate}
  \item If $R/Q$ is $F$-pure, then $Q^{(hn-h+1)}\subseteq Q^n$ for all $n \geqslant 1$.
  \item If $R/Q$ is strongly $F$-regular and $h\geq 2$, then $Q^{((h-1)(n-1)+1)}\subseteq Q^n$ for all $n \geqslant 1$.
\end{enumerate}
\end{thm}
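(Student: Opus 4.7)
Both containments are ideal containments and can therefore be checked after localizing at a maximal ideal $\m$ of $R$; all hypotheses (being $F$-finite Gorenstein, $\pd(R/Q)<\infty$, $F$-purity or strong $F$-regularity of $R/Q$, and the bound $\BHt(Q) \leq h$) survive this localization. I therefore assume $(R,\m)$ is local and denote by $\Phi_e \in \Hom_R(F^e_*R, R)$ the Grothendieck trace, which generates $\Hom_R(F^e_*R, R)$ as an $F^e_*R$-module in the Gorenstein case.

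For part (1), my plan is to combine Theorem \ref{thm.LiftFpure}(1) with Lemmas \ref{lem.I_econtainment1} and \ref{lem.PhiI_e}. Theorem \ref{thm.LiftFpure}(1) produces $s \in (I_e(Q):Q)$ with $\Phi_e(F^e_* s)$ a unit, since locally the identity $\Phi_e(F^e_*(I_e(Q):Q))=R$ forces such an $s$. Lemma \ref{lem.I_econtainment1} with $I = I_e(Q)$ then places $s$ inside the colon
\[
\bigl(I_e(Q)(Q^{n-1})^{[q]} : (Q^{(hn-h+1)})^{[q]}\bigr)
\]
for all $q = p^e$ sufficiently large, and Lemma \ref{lem.PhiI_e} applied with $\varphi = \Phi_e$, $I = Q^{n-1}$, and $J = Q^{(hn-h+1)}$ upgrades this to
\[
\Phi_e(F^e_* s) \cdot Q^{(hn-h+1)} \subseteq Q \cdot Q^{n-1} = Q^n.
\]
Unit-ness of $\Phi_e(F^e_* s)$ immediately gives $Q^{(hn-h+1)} \subseteq Q^n$.

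For part (2), I will induct on $n$; the case $n = 1$ is trivial. For $n \geq 2$, set $m = (h-1)(n-1)+1$ and $n' = m - h + 1 = (h-1)(n-2)+1$, chosen so that the inductive hypothesis reads exactly $Q^{(n')} \subseteq Q^{n-1}$. Applying Lemma \ref{lem.I_econtainment2} in its radical, finite projective dimension form (valid because $F$-purity of $R/Q$ forces $Q$ to be radical) with lemma-index $n'$ yields, for $q \gg 0$,
\[
(Q^m : Q^{(m)})(I_e(Q):Q) \subseteq \bigl(Q^{[q]} I_e(Q^{(n')}) : (Q^{(m)})^{[q]}\bigr).
\]
Since $Q^m R_P = Q^{(m)} R_P$ for every $P \in \Ass(Q)$, the colon $(Q^m : Q^{(m)})$ is not contained in any minimal prime of $Q$, so I can choose $c \in (Q^m : Q^{(m)})$ avoiding every minimal prime of $Q$. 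Theorem \ref{thm.LiftFpure}(2) applied to this $c$ then furnishes, for all $e \gg 0$, some $s \in (I_e(Q):Q)$ with $\Phi_e(F^e_*(cs))$ a unit. Lemma \ref{lem.PhiI_e} applied to the above colon with $\varphi = \Phi_e$, with the lemma's $Q$ taken to be $Q^{(n')}$, its $I$ to be our $Q$, and its $J$ to be $Q^{(m)}$, gives
\[
\Phi_e(F^e_*(cs)) \cdot Q^{(m)} \subseteq Q \cdot Q^{(n')} \subseteq Q \cdot Q^{n-1} = Q^n,
\]
where the second containment is the inductive hypothesis. Since $\Phi_e(F^e_*(cs))$ is a unit, $Q^{(m)} \subseteq Q^n$, closing the induction.

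The main subtlety lies in part (2): one has to choose the shift $n' = (h-1)(n-2)+1$ so that Lemma \ref{lem.I_econtainment2} plugs into the inductive hypothesis, and verify that $(Q^m : Q^{(m)})$ contains an element outside every minimal prime of $Q$ so that the strongly $F$-regular criterion of Theorem \ref{thm.LiftFpure}(2) can be invoked on precisely that element. Once these alignments are in place, both parts of the theorem follow from a single "lift and trace" template, in which $I_e(-)$ and the Grothendieck trace $\Phi_e$ play the roles that ordinary Frobenius powers do in the Grifo--Huneke argument over a regular ring.
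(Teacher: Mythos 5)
Your proposal is correct and follows essentially the same "lift and trace" strategy as the paper: localize, invoke Theorem \ref{thm.LiftFpure} to get an element of $(I_e(Q):Q)$ (times $c$) whose trace is a unit, feed it through Lemmas \ref{lem.I_econtainment1}/\ref{lem.I_econtainment2} and \ref{lem.PhiI_e}, and in part (2) close with the induction on $n$. The only difference is organizational — the paper first proves $Q^{(n+h-1)} \subseteq Q\,Q^{(n)}$ for all $n$ and then cites the Grifo--Huneke induction, while you reindex so the induction is folded directly into the argument.
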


\begin{remark}\label{remark fin pd analytic spread}
	Under the hypothesis of Theorem \ref{thm.MainFinitePdim}, $R_P$ is regular for all $P \in \Ass(R/Q)$, since $\pd (R/Q) < \infty$ and $Q$ is radical. Thus the largest minimal number of generators of $QR_P$ and the largest analytic spread of $Q R_P$ for all $P\in\Ass(R/Q)$ are both the same as $\BHt(Q)$.
\end{remark}

\begin{proof}[Proof of Theorem \ref{thm.MainFinitePdim}]
Since it is enough to check the containment after localizing at each maximal ideal of $R$, we may assume $R$ is local.
\begin{enumerate}
  \item Combining Theorem \ref{thm.LiftFpure}, Lemma \ref{lem.I_econtainment1} and Lemma \ref{lem.PhiI_e}, we have
\begin{align*}
R & \subseteq  \Phi_e(F^e_*(I_e(Q):Q)) & \textrm{ by Theorem \ref{thm.LiftFpure}} \\
& \subseteq   \Phi_e \left( F^e_*  \left( I_e(Q) \left( Q^{n-1} \right)^{[q]} : \left( Q^{(hn-h+1)} \right)^{[q]} \right) \right) & \textrm{ by Lemma \ref{lem.I_econtainment1}} \\
& \subseteq  \left( Q Q^{n-1} : Q^{(hn-h+1)} \right) & \textrm{ by Lemma  \ref{lem.PhiI_e}}.
\end{align*}

Therefore, $Q^{(hn-h+1)}\subseteq Q^n$, as desired.
  \item There exists an element $c \in \left( Q^{n+h-1} : Q^{(n+h-1)} \right)$ not in any minimal prime of $Q$. Thus combining Theorem \ref{thm.LiftFpure}, Lemma \ref{lem.I_econtainment2} and Lemma \ref{lem.PhiI_e}, for $e\gg0$ we have (note we are using Remark \ref{remark fin pd analytic spread} here):
\begin{align*}
R & \subseteq \Phi_e(F^e_*(c (I_e(Q):Q) )) & \textrm{ by Theorem \ref{thm.LiftFpure}} \\
& \subseteq  \Phi_e\left( F^e_* \left( Q^{[q]} I_e( Q^{(n)}) : \left( Q^{(n+h-1)} \right)^{[q]}\right) \right) & \textrm{ by Lemma \ref{lem.I_econtainment2}} \\
& \subseteq \left( QQ^{(n)}: Q^{(n+h-1)} \right) & \textrm{ by Lemma } \ref{lem.PhiI_e}.
\end{align*}
Thus $Q^{(n+h-1)} \subseteq Q Q^{(n)}$ for all $n \geqslant 1$. The containment $Q^{((h-1)(n-1)+1)}\subseteq Q^n$ for all $n \geqslant 1$ now follows by induction on $n$, as in \cite[Theorem 4.1]{GrifoHuneke}. \qedhere
\end{enumerate}
\end{proof}

The following corollary is immediate.

\begin{corollary}
\label{sfr finite pd n=2}
	Let $R$ be an $F$-finite Gorenstein ring and $Q \subseteq R$ be an ideal with $\pd(R/Q)<\infty$ and big height $2$. If $R/Q$ is strongly $F$-regular, then $Q^{(n)}=Q^n$ for all $n\geq 1$.
\end{corollary}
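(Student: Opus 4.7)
The plan is to read this off directly from Theorem \ref{thm.MainFinitePdim}(2) by plugging in $h=2$. The hypotheses of the theorem are exactly the hypotheses we are given: $R$ is $F$-finite Gorenstein, $Q$ has finite projective dimension, $R/Q$ is strongly $F$-regular, and the big height $h=2$ satisfies $h\geq 2$.

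With $h=2$, the exponent on the symbolic power in part (2) of the theorem becomes
\[
(h-1)(n-1)+1 = (n-1)+1 = n,
\]
so Theorem \ref{thm.MainFinitePdim}(2) yields $Q^{(n)} \subseteq Q^n$ for every $n\geq 1$. The reverse containment $Q^n \subseteq Q^{(n)}$ is automatic for any ideal (symbolic powers are obtained by collecting primary components at associated primes of $Q$, so they contain the ordinary powers). Combining the two containments gives $Q^{(n)} = Q^n$ for all $n\geq 1$.

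There is really no obstacle here; the statement is a clean specialization of the preceding theorem to the smallest nontrivial big height. The only thing worth emphasizing for the reader is why $(h-1)(n-1)+1 = n$ when $h=2$, which makes the Harbourne-type containment collapse to the equality of symbolic and ordinary powers.
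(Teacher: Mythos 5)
Your proof is correct and is exactly the paper's intended derivation: the paper states the corollary as an immediate consequence of Theorem \ref{thm.MainFinitePdim}(2), obtained by setting $h=2$ so that $(h-1)(n-1)+1=n$, with the reverse containment $Q^n\subseteq Q^{(n)}$ being automatic. No issues.
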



\section{Ideals of infinite projective dimension}

In this section, we prove versions of the symbolic power containments for ideals not necessarily of finite projective dimension. We begin with several lemmas.

\begin{lemma}[Hochster--Huneke, Lemma 3.6 in \cite{comparison}]
\label{lem.Jacobian}
Let $R$ be a geometrically reduced equidimensional $k$-algebra finitely generated over a field $k$ of prime characteristic $p$. Let $I$ be an ideal of $R$ and let $W$ denote the multiplicative system $R-\cup_{P\in\Ass(R/I)}P$. Let $J = J(R/k)$ be the Jacobian ideal. Then for every $q = p^e$,
$$J^{[q]} ( I^{[q]})^W \subseteq I^{[q]}.$$
\end{lemma}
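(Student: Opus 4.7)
My plan is to follow the Hochster--Huneke approach: reduce via Noether normalization to a polynomial subring $A \subseteq R$, over which Frobenius is flat, and then use a Lipman--Sathaye style Jacobian estimate to transfer the associated--prime behaviour of $I^{[q]}$ back to $R$. Since $R$ is finitely generated, equidimensional, and geometrically reduced over $k$, a generic Noether normalization yields $A = k[z_1, \ldots, z_d] \subseteq R$ with $R$ module-finite over $A$ and $\operatorname{Frac}(R)/\operatorname{Frac}(A)$ separable. The desired containment can be checked after localization at each maximal ideal, so I may assume $A$ is a regular local ring and $R$ is semilocal over it.

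The technical heart of the argument is a Lipman--Sathaye type inclusion
$$J(R/k) \cdot R^{1/q} \;\subseteq\; R \cdot A^{1/q}$$
inside $R^{1/q}$, asserting that the Jacobian clears the ``denominators'' when passing from $R^{1/q}$ to the subextension $R \otimes_A A^{1/q}$. This uses the comparison of $J(R/k)$ with the relative Jacobian $J(R/A)$ up to units at the generic points, which is precisely where the geometric reduction hypothesis enters: it ensures separability of $\operatorname{Frac}(R)/\operatorname{Frac}(A)$, so that $J(R/A)$ is nonzero generically. Granted this, any $u \in R$ with a $q$-th root $u^{1/q} \in R^{1/q}$ can be written, after multiplying by $J$, as $J \cdot u^{1/q} = \sum_i r_i a_i^{1/q}$ for some $r_i \in R$ and $a_i \in A$; raising to the $q$-th power gives $J^{[q]} \cdot u = \sum_i r_i^q a_i$ with $a_i \in A$.

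Now take $u \in (I^{[q]})^W$, so $u \in I^{[q]} R_P$ for every $P \in \Ass(I)$. Since $A$ is regular, Frobenius on $A$ is flat, so for any ideal $\mathfrak{b} \subseteq A$ one has $\Ass(\mathfrak{b}^{[q]}) = \Ass(\mathfrak{b})$, and in particular $(\mathfrak{b}^{[q]})^{W \cap A} = \mathfrak{b}^{[q]}$. Combining this with the hypothesis on $u$ and the finite $A$-module structure on $R$, I would argue that the coefficients $a_i$ appearing above must lie in $I \cap A$, which forces $J^{[q]} u \in I^{[q]}$. The main obstacle will be the Lipman--Sathaye step: formulating the Jacobian inclusion precisely and comparing $J(R/A)$ with $J(R/k)$ up to units at generic points. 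A secondary subtlety is that the multiplicative system $W$ is defined by $\Ass(I)$ in $R$ rather than in $A$, so tracking the interaction between the primary decomposition of $I^{[q]}$ in $R$ and the $A$-module structure of $R$ requires the generic étaleness to be used carefully.
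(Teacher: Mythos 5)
The paper does not prove this lemma; it is quoted directly from Hochster--Huneke, and your overall strategy (Noether normalization plus the Lipman--Sathaye inclusion $J\cdot R^{1/q}\subseteq R\cdot A^{1/q}$) is exactly theirs. However, your finishing step has a genuine gap. From $c\,u^{1/q}=\sum_i r_i a_i^{1/q}$ you propose to deduce that the coefficients $a_i$ lie in $I\cap A$. This is neither justified --- the representation is far from unique, nothing ties the $a_i$ to $I$, and the hypothesis $u\in (I^{[q]})^W$ is never actually brought to bear on that representation --- nor sufficient: even granting $a_i\in I\cap A$, the identity $c^q u=\sum_i r_i^q a_i$ only yields $c^q u\in IR$, not $c^q u\in I^{[q]}$. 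The appeal to flatness of Frobenius over $A$ likewise concerns ideals of $A$, whereas $I$ is an ideal of $R$ and both $I^{[q]}$ and the multiplicative system $W$ live there.

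The correct way to close the argument is the one this paper itself extracts from Hochster--Huneke in the proof of Lemma \ref{lem.LiftingMapJacobian}: the inclusion $c\,R^{1/q}\subseteq R\cdot A^{1/q}$ says that multiplication by $c$ on $F^e_*R$ factors $R$-linearly through $G:=R\otimes_A A^{1/q}$, which is a \emph{free} $R$-module because $A^{1/q}$ is free over the polynomial ring $A$. Now take $u\in (I^{[q]})^W$, so $w^q u\in I^{[q]}$ for some $w\in W$, i.e.\ $w\cdot u^{1/q}\in I\cdot R^{1/q}$. Pushing into $G$, the image $g$ of $u^{1/q}$ satisfies $wg\in IG$; since $G$ is free, $\Ass_R(G/IG)=\Ass(R/I)$, so $w$ is a nonzerodivisor on $G/IG$ and hence $g\in IG$. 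Mapping back to $R^{1/q}$ gives $c\,u^{1/q}\in I\cdot R^{1/q}$, and taking $q$-th powers gives $c^q u\in I^{[q]}$, which is the claim since such elements $c^q$ generate $J^{[q]}$ up to varying the normalization. It is the freeness of $G$ over $R$ --- not any property of $I\cap A$ --- that transfers the $W$-saturation hypothesis on $I^{[q]}$ to the conclusion.
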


Similar to Lemma \ref{lem.I_econtainment1}, we have the following:
\begin{lemma}
\label{lem.I_econtainmentInfPd 1}
	Let $R$ be a geometrically reduced equidimensional $k$-algebra finitely generated over a field $k$ of prime characteristic $p$ and let $Q\subseteq R$ be an ideal. Let $h$ denote the largest analytic spread of $QR_P$ for all $P\in\Ass(R/Q)$. Then for all $n \geqslant 1$,
	$$\left( I : Q \right) \subseteq \left( I \left( Q^{n-1} \right)^{[q]} : \left( J^{n-1} Q^{(hn-h+1)} \right)^{[q]} \right)$$
	for all $q = p^e \gg 0$.
\end{lemma}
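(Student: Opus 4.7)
The plan is to adapt the proof of Lemma \ref{lem.I_econtainment1} by inserting powers of the Jacobian ideal $J$ at the step where the finite-projective-dimension case used Lemma \ref{lem.AssPrimeFinitePdim} (namely the equality $(Q^{[q]})^W = Q^{[q]}$). The Hochster--Huneke result (Lemma \ref{lem.Jacobian}) is precisely the replacement one needs in the singular setting, and once that substitution is made, the rest of the argument proceeds exactly as before.

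First I would handle the trivial $n=1$ case: the statement reduces to $(I:Q) \subseteq (I : Q^{[q]})$, which is immediate since $Q^{[q]} \subseteq Q$. For the rest of the proof I fix $n \geq 2$ and take an arbitrary $t \in (I:Q)$. Expanding
\[
t\bigl(J^{n-1} Q^{(hn-h+1)}\bigr)^{[q]} = J^{(n-1)q}\cdot t\bigl(Q^{(hn-h+1)}\bigr)^{[q]},
\]
and then bounding $(Q^{(hn-h+1)})^{[q]} \subseteq (Q^{(hn-h+1)})^q$ and factoring out $tQ \subseteq I$ as in the proof of Lemma \ref{lem.I_econtainment1}, I reduce the claim to establishing the pure containment
\[
J^{(n-1)q}\bigl(Q^{(hn-h+1)}\bigr)^{q-1} \subseteq \bigl(Q^{n-1}\bigr)^{[q]} = \bigl(Q^{[q]}\bigr)^{n-1}
\]
for all $q \gg 0$.

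To prove this containment, the key is to use Lemma \ref{lem.ContainmentContraction}, which gives $(Q^{(hn-h+1)})^{\lfloor (q-1)/(n-1)\rfloor} \subseteq (Q^{[q]})^W$ for $q$ sufficiently large (this lemma does not need the finite projective dimension hypothesis, only the analytic spread bound, which we have). Combining with Lemma \ref{lem.Jacobian} yields
\[
J^{[q]}\bigl(Q^{(hn-h+1)}\bigr)^{\lfloor (q-1)/(n-1)\rfloor} \subseteq J^{[q]}\bigl(Q^{[q]}\bigr)^W \subseteq Q^{[q]}.
\]
Raising both sides to the $(n-1)$-st power, using $(J^{[q]})^{n-1} = J^{(n-1)q}$, and observing that $(n-1)\lfloor (q-1)/(n-1)\rfloor \leq q-1$ so that $(Q^{(hn-h+1)})^{q-1} \subseteq (Q^{(hn-h+1)})^{(n-1)\lfloor (q-1)/(n-1)\rfloor}$, I obtain the desired containment
\[
J^{(n-1)q}\bigl(Q^{(hn-h+1)}\bigr)^{q-1} \subseteq \bigl(Q^{[q]}\bigr)^{n-1}.
\]

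I do not anticipate a serious obstacle here: the only subtle point is bookkeeping of exponents in the reduction, and the conceptual input is that $J^{[q]}$ is exactly what converts $(Q^{[q]})^W$ back into $Q^{[q]}$ when projective dimension is infinite. The remainder of the argument is identical to Lemma \ref{lem.I_econtainment1}, so combining these steps concludes the proof.
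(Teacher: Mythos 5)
Your proof is correct and follows the paper's argument essentially verbatim: reduce to $(J^{[q]})^{n-1}\left(Q^{(hn-h+1)}\right)^{q-1} \subseteq \left(Q^{[q]}\right)^{n-1}$ and deduce this from the single containment $J^{[q]}\left(Q^{(hn-h+1)}\right)^{\lfloor (q-1)/(n-1)\rfloor} \subseteq Q^{[q]}$, obtained by combining Lemma \ref{lem.ContainmentContraction} with Lemma \ref{lem.Jacobian}. One notational slip: $(J^{n-1})^{[q]} = (J^{[q]})^{n-1}$ is in general strictly contained in $J^{(n-1)q}$, so the ideal in your displays should be $(J^{[q]})^{n-1}$ rather than $J^{(n-1)q}$; this is harmless because that smaller ideal is exactly what the lemma requires and exactly what your final step (raising the single containment to the $(n-1)$-st power) actually produces.
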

\begin{proof}
	Fix $n \geqslant 1$. Let $t \in \left( I : Q \right)$. Then
	\[
        t \left( J^{n-1} Q^{(hn-h+1)} \right)^{[q]} \subseteq (tQ) \cdot \left( Q^{(hn-h+1)} \right)^{q-1} \left( J^{[q]} \right)^{n-1} \subseteq I \, \left( Q^{(hn-h+1)} \right)^{q-1} \left( J^{[q]} \right)^{n-1}.
    \]
    It is thus enough to show that
    $$\left( Q^{(hn-h+1)} \right)^{q-1} \left( J^{[q]} \right)^{n-1} \subseteq \left( Q^{[q]} \right)^{n-1},$$
    which will follow from the following containment:
    $$J^{[q]} \left( Q^{(hn-h+1)} \right)^{\left\lfloor \frac{q-1}{n-1} \right\rfloor} \subseteq Q^{[q]}.$$
This follows from Lemma \ref{lem.ContainmentContraction} and Lemma \ref{lem.Jacobian}.
\end{proof}

The next two lemmas will be used in the strongly $F$-regular case.
\begin{lemma}
\label{lem.ContainmentContraction SFR}
Let $Q\subseteq R$ be an ideal. Let $h\geq 2$ and $n\geq 2$ be positive integers such that $h$ is at least the largest minimal number of generators of $QR_P$ for all $P\in\Ass(R/Q)$.
Then for all $q = p^e$, we have
	$$Q^{(h-1)(n-1)} \left( Q^{((h-1)(n-1)+1)} \right)^{q-1}  \subseteq  \left((Q^{((h-1)(n-2)+1)})^{[q]}\right)^W,$$
where $W$ denotes the multiplicative system $R-\cup_{P\in\Ass(R/Q)}P$.
\end{lemma}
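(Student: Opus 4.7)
The plan is to reduce the statement to a local inclusion at each $P \in \Ass(Q)$ and then apply the Pigeonhole Principle for ideals with few generators. First, I would note that membership in $\bigl((Q^{((h-1)(n-2)+1)})^{[q]}\bigr)^W$ can be checked after passage to $W^{-1}R$, and therefore at each $R_P$ for $P \in \Ass(Q)$. At any such prime, $QR_P$ is $PR_P$-primary, so $(Q^{(k)})R_P = (QR_P)^k$ for every $k \geq 1$; moreover, by hypothesis $QR_P$ can be generated by at most $h$ elements.

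Setting $I := QR_P$ and writing $A = (h-1)(n-1)$ and $m = (h-1)(n-2)+1$, the problem reduces to showing
\[
I^{A + (A+1)(q-1)} \subseteq (I^m)^{[q]}.
\]
The left exponent simplifies to $(A+1)q - 1$, and a direct computation gives
\[
h + m - 1 \;=\; h + (h-1)(n-2) \;=\; (h-1)(n-1) + 1 \;=\; A + 1,
\]
so the left exponent is exactly $(h+m-1)q - 1$.

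Finally, I would invoke the Pigeonhole Principle used in Lemma \ref{q lemma for I_e} (originating in \cite[Lemma 2.4]{comparison}): for any ideal $I$ generated by at most $h$ elements and any $m \geq 1$, one has $I^{(h+m-1)q - h + 1} \subseteq (I^m)^{[q]}$. Since $h \geq 2$, we have $(h+m-1)q - 1 \geq (h+m-1)q - h + 1$, which gives the desired inclusion.

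The only real obstacle is verifying the arithmetic identity $h + m - 1 = A + 1$: the shifted symbolic exponent $(h-1)(n-2)+1$ appearing in the statement is precisely tailored so that the Pigeonhole bound matches the exponent produced by expanding the left-hand side, and the hypothesis $h \geq 2$ provides exactly the one-unit slack needed to conclude.
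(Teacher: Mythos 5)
Your proof is correct and follows essentially the same route as the paper's: localize at each $P\in\Ass(Q)$ so that $QR_P$ is generated by at most $h$ elements, simplify the exponent to $((h-1)(n-1)+1)q-1$, use $h\geq 2$ to drop to $((h-1)(n-1)+1)q-h+1$, and apply the Pigeonhole Principle $I^{(h+m-1)q-h+1}\subseteq (I^m)^{[q]}$ with $m=(h-1)(n-2)+1$. The arithmetic identity $h+m-1=(h-1)(n-1)+1$ that you verify is exactly the paper's bookkeeping in a slightly different notation.
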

\begin{proof}
It suffices to prove the containment after localizing at $P$ for each $P\in\Ass(R/Q)$. Therefore we may assume that both sides are the ordinary powers of $Q$ and $Q$ is generated by $h$ elements. Since $h\geq 2$, for all $q$ we have
$$
(h-1)(n-1) + ((h-1)(n-1)+1)(q-1) = ((h-1)(n-1)+1)q-1 \geq ((h-1)(n-1)+1)q-h+1
$$
Thus we have
$$
Q^{(h-1)(n-1) + ((h-1)(n-1)+1)(q-1)}\subseteq Q^{((h-1)(n-1)+1)q-h+1}\subseteq (Q^{(h-1)(n-2)+1})^{[q]}
$$
where the last inclusion follows from the Pigeonhole Principle: if $I$ is an ideal generated by $h$ elements then $I^{Nq-h+1}\subseteq (I^{N-(h-1)})^{[q]}$.
\end{proof}

\begin{lemma}\label{lem.I_econtainmentInfPd 2}
Let $R$ be a geometrically reduced equidimensional $k$-algebra finitely generated over a field $k$ of prime characteristic $p$ and let $Q\subseteq R$ be an ideal. Let $h\geq 2$ and $n\geq 2$ be positive integers such that $h$ is at least the largest minimal number of generators of $QR_P$ for all $P\in\Ass(R/Q)$.
Then for all $q =p^e \gg0$ and all $I\subseteq R$, we have
	$$\left( Q^{(h-1)(n-1)+1} : Q^{((h-1)(n-1)+1)} \right) \left( I : Q \right) \subseteq \left( I  \left( Q^{((h-1)(n-2)+1)} \right)^{[q]} : \left( J Q^{((h-1)(n-1)+1)} \right)^{[q]} \right).$$
In particular, there exists $c$ not in any minimal prime of $Q$ such that
	$$c \left( I : Q \right) \subseteq \left( I  \left( Q^{((h-1)(n-2)+1)} \right)^{[q]} : \left( J Q^{((h-1)(n-1)+1)} \right)^{[q]} \right).$$
\end{lemma}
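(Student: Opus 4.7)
The plan is to mirror the proof of Lemma \ref{lem.I_econtainment2}, substituting honest Frobenius powers and a Jacobian twist for the $I_e(-)$ ideals used there. The two new ingredients needed, Lemma \ref{lem.ContainmentContraction SFR} (replacing the role Lemma \ref{q lemma for I_e} played previously) and Lemma \ref{lem.Jacobian} (to absorb a factor of $J^{[q]}$ and contract a localization back to $R$), are both already available.

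Fix $s\in(I:Q)$ and $t\in\bigl(Q^{(h-1)(n-1)+1}:Q^{((h-1)(n-1)+1)}\bigr)$. The core computation is to peel one factor of $Q^{((h-1)(n-1)+1)}$ off $(Q^{((h-1)(n-1)+1)})^{[q]}\subseteq(Q^{((h-1)(n-1)+1)})^q$, use $t\cdot Q^{((h-1)(n-1)+1)}\subseteq Q^{(h-1)(n-1)+1}=Q\cdot Q^{(h-1)(n-1)}$, and then $s\cdot Q\subseteq I$, arriving at
\[
st\bigl(JQ^{((h-1)(n-1)+1)}\bigr)^{[q]}\subseteq I\cdot J^{[q]}\cdot Q^{(h-1)(n-1)}\bigl(Q^{((h-1)(n-1)+1)}\bigr)^{q-1}.
\]
Lemma \ref{lem.ContainmentContraction SFR} pushes the right-hand tail into $\bigl((Q^{((h-1)(n-2)+1)})^{[q]}\bigr)^W$ with $W=R\setminus\bigcup_{P\in\Ass(Q)}P$, and then Lemma \ref{lem.Jacobian} absorbs the $J^{[q]}$ so that the whole expression lands inside $I\cdot(Q^{((h-1)(n-2)+1)})^{[q]}$. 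The only slightly delicate point is that Lemma \ref{lem.Jacobian} is phrased with the multiplicative system attached to $\Ass\bigl(Q^{((h-1)(n-2)+1)}\bigr)$ rather than $\Ass(Q)$; since the former is contained in the latter, our $W$ is smaller, so $((\cdot)^{[q]})^W$ is contained in the $W$-contraction used by the lemma, and the absorption still applies.

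For the final assertion, one produces $c$ by prime avoidance. Localizing at any minimal prime $P$ of $Q$ kills every primary component of $Q^{((h-1)(n-1)+1)}$ except the one at $P$, so $Q^{((h-1)(n-1)+1)}R_P=(Q^{(h-1)(n-1)+1})R_P$; hence $\bigl(Q^{(h-1)(n-1)+1}:Q^{((h-1)(n-1)+1)}\bigr)R_P=R_P$, and the colon ideal is contained in no minimal prime of $Q$. Prime avoidance then supplies a $c$ in the colon but outside every minimal prime of $Q$, and applying the first part with $t=c$ yields the ``in particular'' containment. The main obstacle, if any, is keeping the Frobenius exponents straight and checking that Lemma \ref{lem.ContainmentContraction SFR} applies with the exact numerology used above; beyond that, the proof is purely bookkeeping on top of two already-established lemmas.
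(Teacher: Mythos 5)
Your proposal is correct and follows essentially the same route as the paper: the same peeling computation reducing to $Q^{(h-1)(n-1)}\bigl(Q^{((h-1)(n-1)+1)}\bigr)^{q-1}J^{[q]}\subseteq\bigl(Q^{((h-1)(n-2)+1)}\bigr)^{[q]}$, handled by Lemma \ref{lem.ContainmentContraction SFR} followed by Lemma \ref{lem.Jacobian}, with your multiplicative-system remark playing the role of the paper's observation that $Q$ and $Q^{((h-1)(n-2)+1)}$ have the same associated primes. The prime-avoidance argument for the ``in particular'' clause is also the intended one.
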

\begin{proof}
	Fix $n \geqslant 1$. Let $t \in \left( I : Q \right)$ and $s \in \left( Q^{(h-1)(n-1)+1} : Q^{((h-1)(n-1)+1)} \right)$. Then
	\begin{align*}
        (st) \left( J Q^{((h-1)(n-1)+1)} \right)^{[q]} & \subseteq t \left( sQ^{((h-1)(n-1)+1)} \right) \cdot \left( Q^{((h-1)(n-1)+1)} \right)^{q-1}  J^{[q]}\\
       & \subseteq (tQ) Q^{(h-1)(n-1)} \left( Q^{((h-1)(n-1)+1)} \right)^{q-1} J^{[q]}  \\
       & \subseteq I Q^{(h-1)(n-1)} \left( Q^{((h-1)(n-1)+1)} \right)^{q-1} J^{[q]}.
    \end{align*}
    It is thus enough to show that
    $$Q^{(h-1)(n-1)} \left( Q^{((h-1)(n-1)+1)} \right)^{q-1} J^{[q]} \subseteq \left( Q^{((h-1)(n-2)+1)} \right)^{[q]}.$$
This follows from
	\begin{align*}
		Q^{(h-1)(n-1)} \left( Q^{((h-1)(n-1)+1)} \right)^{q-1} J^{[q]} & \subseteq J^{[q]}  \left((Q^{((h-1)(n-2)+1)})^{[q]}\right)^W & (\textrm{By Lemma \ref{lem.ContainmentContraction SFR}}) \\
		& \subseteq \left( Q^{((h-1)(n-2)+1)} \right)^{[q]}. & (\textrm{By Lemma \ref{lem.Jacobian}})
	\end{align*}
Note that we can apply Lemma \ref{lem.Jacobian} here because $Q$ and $Q^{((h-1)(n-2)+1)}$ have the same set of associated primes.
\end{proof}

The next crucial lemma shows that any Cartier map can be lifted at the expense of multiplying by elements in the Jacobian ideal.

\begin{lemma}
\label{lem.LiftingMapJacobian}
	Let $R$ be a geometrically reduced equidimensional $k$-algebra finitely generated over an infinite field $k$ of prime characteristic $p$ and let $Q\subseteq R$ be an ideal. Let $J=J(R/k)$ denote the Jacobian ideal of $R$.
	\begin{enumerate}
		\item If $R/Q$ is $F$-pure, then for every $x\in J$ and all $e>0$, there exists $\varphi\in\Hom_R(F^e_*R, R)$ such that $x=\varphi(F^e_*1)$ mod $Q$ and that $Q\subseteq I_e^\varphi(Q)$.
		\item If $R/Q$ is strongly $F$-regular, then for every $c$ not in any minimal prime of $Q$ and for every $x\in J$, there exists $e\gg0$ and $\varphi\in\Hom_R(F^e_*R, R)$ such that $x=\varphi(F^e_*c)$ mod $Q$ and that $Q\subseteq I_e^\varphi(Q)$.
	\end{enumerate}
\end{lemma}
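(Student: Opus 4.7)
The plan is to reduce the lifting problem to the classical Fedder/Glassbrenner criterion in a polynomial presentation of $R$, and then use the Hochster--Huneke Jacobian lemma (Lemma \ref{lem.Jacobian}) to force the resulting Cartier map to descend from the polynomial ring to $R$. First I would write $R = S/\mathfrak{a}$ with $S = k[x_1,\ldots,x_n]$ a polynomial ring, and let $\widetilde{Q} \subseteq S$ denote the preimage of $Q$, so that $S/\widetilde{Q} \cong R/Q$. Since $S/\widetilde{Q}$ is $F$-pure, the classical Fedder criterion in the regular ring $S$ furnishes (after a unit rescaling) an element $u \in (\widetilde{Q}^{[q]} : \widetilde{Q})$ with $\Phi^S_e(F^e_*u) \equiv 1 \pmod{\widetilde{Q}}$; in case (2), Glassbrenner's criterion yields, for $e \gg 0$, an analogous element $u \in (\widetilde{Q}^{[q]} : c\widetilde{Q})$ with $\Phi^S_e(F^e_*(uc)) \equiv 1 \pmod{\widetilde{Q}}$.

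Given $x \in J = J(R/k)$, I would pick any lift $\widetilde{x} \in S$ and take $\varphi$ to be the descent of
\[
\widetilde{x}\,\Phi^S_e(F^e_*u\cdot -) \;=\; \Phi^S_e\bigl(F^e_*(\widetilde{x}^q u) \cdot -\bigr) : F^e_*S \to S
\]
from $F^e_*S \to S$ to an $R$-linear map $F^e_*R \to R$. Once this descent is established, $\varphi(F^e_*1) \equiv \widetilde{x}\,\Phi^S_e(F^e_*u) \equiv \widetilde{x} \equiv x \pmod{Q}$ (and $\varphi(F^e_*c) \equiv x \pmod{Q}$ in case (2)) is immediate, and $Q \subseteq I^\varphi_e(Q)$ follows automatically from $u\widetilde{Q} \subseteq \widetilde{Q}^{[q]}$ together with $\Phi^S_e(F^e_*\widetilde{Q}^{[q]}) \subseteq \widetilde{Q}$.

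The heart of the argument is the descent condition $\widetilde{x}^q u\,\mathfrak{a} \subseteq \mathfrak{a}^{[q]}$. Since $\pd_S(S/\mathfrak{a}) < \infty$, Lemma \ref{lem.AssPrimeFinitePdim} yields $\Ass_S(\mathfrak{a}^{[q]}) = \Ass_S(\mathfrak{a})$, so it suffices to check after localization at each $P \in \Ass(\mathfrak{a})$. Radicality of $\mathfrak{a}$ forces $\mathfrak{a} S_P = PS_P$, and by comparing primary decompositions $\widetilde{Q} S_P \in \{\mathfrak{a} S_P,\, S_P\}$; in either case the Fedder relation $u\widetilde{Q} \subseteq \widetilde{Q}^{[q]}$ localizes to $u\mathfrak{a} S_P \subseteq \mathfrak{a}^{[q]} S_P$, and hence $u\mathfrak{a} \subseteq (\mathfrak{a}^{[q]})^W$ with $W = S \setminus \bigcup_{P \in \Ass(\mathfrak{a})} P$. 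Finally, transferring Lemma \ref{lem.Jacobian} to the polynomial ring $S$ (using that the image of $J(R/k)$ in $S$ plays the role of the Jacobian of the presentation) gives $\widetilde{x}^q \cdot (\mathfrak{a}^{[q]})^W \subseteq \mathfrak{a}^{[q]}$, completing the descent.

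The main obstacle will be this last step: rigorously transferring the Hochster--Huneke Jacobian lemma from its natural setting inside $R$ to the polynomial ring $S$ where $\mathfrak{a}$ lives, so that $\widetilde{x}^q \in S$ carries $(\mathfrak{a}^{[q]})^W$ into $\mathfrak{a}^{[q]}$. Once that transfer is handled, both cases (1) and (2) close in parallel, differing only in the bookkeeping around the element $c$.
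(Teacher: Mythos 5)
There is a genuine gap, and it sits exactly where you predicted trouble but is of a different nature than you anticipated. The descent condition you need is $\widetilde{x}^q u\,\mathfrak{a} \subseteq \mathfrak{a}^{[q]}$, and since $\pd_S(S/\mathfrak{a})<\infty$ automatically in the polynomial ring, Lemma \ref{lem.AssPrimeFinitePdim} gives $(\mathfrak{a}^{[q]})^W=\mathfrak{a}^{[q]}$ outright --- so your final ``transfer of Lemma \ref{lem.Jacobian} to $S$'' is vacuous and the Jacobian does no work there. The real problem is the preceding sentence: at a minimal prime $P$ of $\mathfrak{a}$ with $\widetilde{Q}S_P=S_P$, the localized Fedder relation reads $uS_P\subseteq S_P$ and gives no information whatsoever about $u\,\mathfrak{a}S_P \subseteq \mathfrak{a}^{[q]}S_P$; your ``in either case'' is false in this case, and nothing in the construction controls $u$ at such primes. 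Concretely, take $S=k[x,y]$, $\mathfrak{a}=(xy)$, $R=S/\mathfrak{a}$ (geometrically reduced, equidimensional, $J=(x,y)R$), $Q=(x,y)R$, so $\widetilde{Q}=(x,y)$ and $R/Q=k$ is $F$-pure. The element $u=(xy)^{q-1}+x^q$ lies in $(\widetilde{Q}^{[q]}:\widetilde{Q})$ and satisfies $\Phi^S_e(F^e_*u)\equiv 1 \pmod{\widetilde{Q}}$, so it is a legitimate output of Fedder's criterion; taking $x\in J$ to be the class of the variable $x$ with lift $\widetilde{x}=x$, one gets $\widetilde{x}^q u\,\mathfrak{a} \ni x^{2q+1}y \notin (x^qy^q)=\mathfrak{a}^{[q]}$ (the failure occurs at the minimal prime $(y)$, which does not contain $\widetilde{Q}$). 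So the map $\Phi^S_e(F^e_*(\widetilde{x}^qu)\cdot -)$ simply does not descend to $R$. The underlying reason is structural: your $\varphi$ is $\widetilde{x}\cdot\Phi^S_e(F^e_*(u\cdot -))$, and post-multiplying outputs by an element $\widetilde{x}\notin\mathfrak{a}$ can never repair the failure of $\Phi^S_e(F^e_*(u\cdot-))$ to carry $F^e_*\mathfrak{a}$ into $\mathfrak{a}$.

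The role the Jacobian actually plays is homological, not a colon-ideal computation in a presentation. The paper's proof stays inside $R$: by the proof of Hochster--Huneke's Theorem 3.4, multiplication by a Jacobian generator on $F^e_*R$ factors through a free $R$-module, hence $J\cdot\Ext^1_R(F^e_*R,-)=0$; applying $\Hom_R(F^e_*R,-)$ to $0\to Q\to R\to R/Q\to 0$ then shows that the composite $F^e_*R\twoheadrightarrow F^e_*(R/Q)\xrightarrow{\ \phi\ }R/Q\xrightarrow{\ x\ }R/Q$ lifts to some $\varphi\in\Hom_R(F^e_*R,R)$, and both conclusions ($\varphi(F^e_*1)\equiv x$ and $Q\subseteq I_e^{\varphi}(Q)$) fall out of the commutative square. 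If you want to salvage your presentation-based approach, you would need to lift $x\cdot\phi$ rather than $\phi$, and justify the lift by this $\Ext^1$-annihilation (or an equivalent factorization through a free module); the Fedder element $u$ alone cannot be forced to satisfy the extra condition $u\in(\mathfrak{a}^{[q]}:\mathfrak{a})$ at the components of $\mathfrak{a}$ away from $\widetilde{Q}$.
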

\begin{proof}
We first claim that $J$ annihilates $\Ext^1_R(F^e_*R, -)$ for all $e>0$. By \cite[Theorem 3.4]{comparison}, when $k$ is infinite,\footnote{Note that \cite[Theorem 3.4]{comparison} requires that we enlarge $k$ to $k(t)$, but this is only needed to guarantee that we have an infinite field so that we can pick general elements.} $J$ can be generated by elements $c$ such that there exists a polynomial ring $A$ over $k$ ($A$ depends on $c$) such that $c \cdot F^e_*R\subseteq F^e_*A\otimes_AR$. It follows that the map $F^e_*R \longrightarrow F^e_*R$ given by multiplication by $c$ factors through a free $R$-module $A_e:=F^e_*A\otimes_AR$ (since $F^e_*A$ is free over $A$ as $A$ is polynomial ring). The map on $\Ext^1_R(F^e_*R, -)$ induced by multiplication by $c$ map must then factor though $\Ext^1_R(A_e, -)=0$, thus multiplication by $c$ on $\Ext^1_R(F^e_*R, -)$ is the $0$-map. As a consequence, $J$ annihilates $\Ext^1_R(F^e_*R, -)$.

The short exact sequence
$$\xymatrix{0 \ar[r] & Q \ar[r] & R \ar[r] & R/Q \ar[r] & 0}$$
induces the exact sequence
$$\xymatrix{\Hom_R(F^e_*R, R) \ar[r] & \Hom_R(F^e_*R, R/Q) \ar[r] & \Ext^1_R(F^e_*R, Q)}.$$
Since $J \cdot \Ext^1_R(F^e_*R, Q) = 0$, every map in $\Hom_R(F^e_*R, R/Q)$ lifts to a map in $\Hom_R(F^e_*R, R)$ after multiplying by elements in $J$.

\begin{enumerate}

\item Now we fix $x\in J$. Consider the splitting $\xymatrix{F^e_*(R/Q) \ar[r]^-{\phi} & R/Q}$, $\phi(F^e_* 1) = 1$.
The composition
$$\xymatrix{F^e_*(R/Q) \ar[r]^-{\phi} & R/Q \ar[r]^-x & R/Q}$$
lifts to a map $\varphi$: $F^e_*R \longrightarrow R$, meaning, there exists a commutative diagram
$$\xymatrix{F^e_* R \ar[r] \ar[d]_-{\varphi} & F^e_*(R/Q) \ar[d]^-{x \cdot \phi(-)} \\ R \ar[r] & R/Q}$$
Tracing the image of $F^e_*1 \in F^e_* R$ through the diagram, we obtain that $x=\varphi(F^e_*1)$ mod $Q$. Moreover, since $\varphi$ induces a map $F^e_*(R/Q)\to R/Q$, we have $Q\subseteq I_e^\varphi(Q)$.

\item We fix $c$ not in any minimal prime of $Q$ and fix $x\in J$. Since $R/Q$ is strongly $F$-regular, there exists $e\gg0$ and $\phi\in \Hom_{R/Q}(F^e_*(R/Q), R/Q)$ such that $\phi(F^e_*c)=1$. Again the composition
$$\xymatrix{F^e_*(R/Q) \ar[r]^-{\phi} & R/Q \ar[r]^-x & R/Q}$$
lifts to a map $\varphi$: $F^e_*R \longrightarrow R$. We consider the commutative diagram
$$\xymatrix{F^e_* R \ar[r] \ar[d]_-{\varphi} & F^e_*(R/Q) \ar[d]^-{x \cdot \phi(-)} \\ R \ar[r] & R/Q}$$
Tracing the image of $F^e_*c \in F^e_* R$ through the diagram, we obtain that $x=\varphi(F^e_*c)$ mod $Q$. Moreover, since $\varphi$ induces a map $F^e_*(R/Q)\to R/Q$, we have $Q\subseteq I_e^\varphi(Q)$. \qedhere
\end{enumerate}
\end{proof}

We can now prove our main result on symbolic power containments for ideals not necessarily of finite projective dimension.

\begin{thm}
\label{thm.MainInfPdim}
Let $R$ be a geometrically reduced equidimensional $k$-algebra finitely generated over a field $k$ of prime characteristic $p$. Let $Q \subseteq R$ be an ideal and let $J=J(R/k)$ be the Jacobian ideal of $R$. Then
\begin{enumerate}
  \item If $R/Q$ is $F$-pure and $h=\BHt(Q)$, then $J^n Q^{(hn-h+1)} \subseteq Q^n$ for all $n \geq 1$. Moreover, if there exists a Frobenius splitting of $R/Q$ that lifts to a Frobenius splitting of $R$, then we have $J^{n-1} Q^{(hn-h+1)} \subseteq Q^n$ for all $n\geq 1$.
  \item If $R/Q$ is strongly $F$-regular and $h\geq 2$ is at least the largest minimal number of generators of $QR_P$ for all $P\in\Ass(R/Q)$, then $J^{2n-2} Q^{((h-1)(n-1)+1)} \subseteq Q^n$ for all $n \geqslant 1$. Moreover, if for every $c$ not in any minimal prime of $Q$, there exists $e\gg0$ and $\phi\in\Hom_R(F_*^e(R/Q), R/Q)$ which sends $F^e_*c$ to $1$ that lifts to $\varphi\in \Hom_R(F^e_*R, R)$, then $J^{n-1} Q^{((h-1)(n-1)+1)} \subseteq Q^n$.
\end{enumerate}
\end{thm}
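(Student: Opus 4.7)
The plan is to mirror the proof of Theorem \ref{thm.MainFinitePdim}, replacing Theorem \ref{thm.LiftFpure} (which required $\pd(R/Q)<\infty$) by the weaker Lemma \ref{lem.LiftingMapJacobian}, and replacing Lemmas \ref{lem.I_econtainment1}--\ref{lem.I_econtainment2} by their Jacobian-weighted analogues Lemma \ref{lem.I_econtainmentInfPd 1} and Lemma \ref{lem.I_econtainmentInfPd 2}. The price of these substitutions is that each application of the lifting lemma only recovers $\varphi(F^e_* 1)$ (or $\varphi(F^e_* c)$) modulo $Q$, so an error term in $Q$ is unavoidable; the plan is to run an induction on $n$ so that this residual term is absorbed by the inductive hypothesis.

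For part (1), I induct on $n$, with base case $n=1$ being the trivial $JQ\subseteq Q$. For the inductive step, fix $y\in J$ and use Lemma \ref{lem.LiftingMapJacobian}(1) to produce, for $e\gg 0$, some $\varphi\in\Hom_R(F^e_*R,R)$ with $\varphi(F^e_* 1)\equiv y\pmod Q$ and $Q\subseteq I_e^\varphi(Q)$. Since $1\in I_e^\varphi(Q):Q$, Lemma \ref{lem.I_econtainmentInfPd 1} applied with $I=I_e^\varphi(Q)$ gives $(J^{n-1}Q^{(hn-h+1)})^{[q]}\subseteq I_e^\varphi(Q)(Q^{n-1})^{[q]}$. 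Pushing this through $\varphi$ and invoking Lemma \ref{lem.PhiI_e}, the image lies in $Q\cdot Q^{n-1}=Q^n$ and contains $\varphi(F^e_* 1)\cdot J^{n-1}Q^{(hn-h+1)}$. Writing $\varphi(F^e_* 1)=y+q_0$ with $q_0\in Q$, the monotonicity $Q^{(hn-h+1)}\subseteq Q^{(h(n-1)-h+1)}$ lets the inductive hypothesis absorb the residual: $q_0\cdot J^{n-1}Q^{(hn-h+1)}\subseteq Q\cdot J^{n-1}Q^{(h(n-1)-h+1)}\subseteq Q\cdot Q^{n-1}=Q^n$. Hence $y\cdot J^{n-1}Q^{(hn-h+1)}\subseteq Q^n$ for every $y\in J$, proving $J^n Q^{(hn-h+1)}\subseteq Q^n$. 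For the ``moreover'' clause, if the Frobenius splitting already lifts then one may take $\varphi(F^e_* 1)=1$ exactly, skipping the multiplication by $y$, and the same argument yields the sharper $J^{n-1}Q^{(hn-h+1)}\subseteq Q^n$.

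For part (2), I induct on $n$ again; the substantive step is the one-step containment
\[
J^2\,Q^{((h-1)(n-1)+1)}\subseteq Q\cdot Q^{((h-1)(n-2)+1)}\qquad(n\geq 2).
\]
Fix $x\in J$, let $c\notin\bigcup_{P\in\Min(Q)}P$ be the test element furnished by Lemma \ref{lem.I_econtainmentInfPd 2}, and use Lemma \ref{lem.LiftingMapJacobian}(2) to obtain $\varphi\in\Hom_R(F^e_*R,R)$ for some $e\gg 0$ with $\varphi(F^e_* c)\equiv x\pmod Q$ and $Q\subseteq I_e^\varphi(Q)$. Using $1\in I_e^\varphi(Q):Q$, Lemma \ref{lem.I_econtainmentInfPd 2} puts $c$ inside the colon ideal $\left(I_e^\varphi(Q)(Q^{((h-1)(n-2)+1)})^{[q]}\,:\,(JQ^{((h-1)(n-1)+1)})^{[q]}\right)$; pushing through $\varphi$ by Lemma \ref{lem.PhiI_e} then yields $\varphi(F^e_* c)\cdot JQ^{((h-1)(n-1)+1)}\subseteq Q\cdot Q^{((h-1)(n-2)+1)}$, and the residual term in this case is absorbed without induction since $J\subseteq R$ and $Q^{((h-1)(n-1)+1)}\subseteq Q^{((h-1)(n-2)+1)}$. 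Letting $x$ range over $J$ gives the displayed containment, and multiplying by $J^{2n-4}$ and invoking the inductive hypothesis $J^{2n-4}Q^{((h-1)(n-2)+1)}\subseteq Q^{n-1}$ closes the induction, giving $J^{2n-2}Q^{((h-1)(n-1)+1)}\subseteq Q^n$. The ``moreover'' clause in (2) again saves one factor of $J$ at each step because $\varphi(F^e_* c)\equiv 1\pmod Q$, producing $JQ^{((h-1)(n-1)+1)}\subseteq Q\cdot Q^{((h-1)(n-2)+1)}$ and hence $J^{n-1}Q^{((h-1)(n-1)+1)}\subseteq Q^n$.

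The main obstacle throughout is the bookkeeping: one must arrange that the drop in the symbolic exponent coming from the monotonicity $Q^{(a+k)}\subseteq Q^{(a)}$, combined with the extra factor of $Q$ from the residual $q_0$ or $q_1$ of the approximate lift, lands on the previous level of the induction. This accounting is what pins down the specific Jacobian exponents $n$ and $2n-2$ (respectively $n-1$ under the stronger ``moreover'' hypotheses) appearing in the statements, and once the induction is set up correctly the rest of the proof reduces to assembling the lemmas cited above.
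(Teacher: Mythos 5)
Your proposal is correct and takes essentially the same route as the paper's proof: the same induction on $n$, the same use of Lemma \ref{lem.LiftingMapJacobian} combined with Lemmas \ref{lem.I_econtainmentInfPd 1}, \ref{lem.I_econtainmentInfPd 2} and \ref{lem.PhiI_e}, and the same absorption of the mod-$Q$ ambiguity of the lifted map into the previous inductive level. The only difference is presentational — the paper records this absorption as $J\subseteq(\cdots)+Q$ together with $Q\subseteq(\cdots)$, whereas you decompose $\varphi(F^e_*1)=y+q_0$ directly — and the content is identical.
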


\begin{proof}$\,$
	\begin{enumerate}[(1)]
	\item We can replace $R$ by $R':=R\otimes_kk(t)$ and $Q$ by $Q':=QR'$, then $R'/Q'$ is still $F$-pure, $h=\BHt(Q')$, $J(R'/k(t))=JR'$, and it is enough to check the containment in $R'$ since it is faithfully flat over $R$. Therefore we can and we will assume $k$ is infinite in order to invoke Lemma \ref{lem.LiftingMapJacobian}.

We use induction on $n$. If $n=1$ then the containment is obvious. So we assume $n\geq 2$ and we assume the containment for $n-1$, that is, $J^{n-1} Q^{(h(n-1)-h+1)} \subseteq Q^{n-1}$.

For every $x\in J$, we fix a map $\varphi\in\Hom_R(F_*^eR, R)$ as in Lemma \ref{lem.LiftingMapJacobian}. Now by Lemma \ref{lem.LiftingMapJacobian} and Lemma \ref{lem.I_econtainmentInfPd 1}, for all $q = p^e\gg0$ we have
	$$R=\left( I^\varphi_e(Q) : Q \right) \subseteq \left( I^\varphi_e(Q) \left( Q^{n-1} \right)^{[q]} : \left( J^{n-1} Q^{(hn-h+1)} \right)^{[q]} \right).$$
Here we are using that the largest analytic spread of $QR_P$ for all $P\in\Ass(R/Q)$ is the same as the big height of $Q$ because $Q$ is radical. Applying $\varphi(F^e_*-)$ and by Lemma \ref{lem.PhiI_e}, we have
	$$\varphi(F^e_*R) \subseteq \left( Q^n : J^{n-1} Q^{(hn-h+1)} \right).$$
Since $x=\varphi(F^e_*1)$ mod $Q$ by Lemma \ref{lem.LiftingMapJacobian}, we know that $x\in \left( Q^n : J^{n-1} Q^{(hn-h+1)} \right)$ mod $Q$. Since this is true for all $x\in J$, we have
	$$J \subseteq \left( Q^n : J^{n-1} Q^{(hn-h+1)} \right) + Q.$$
	By our induction hypothesis, $J^{n-1} Q^{(hn-h+1)}\subseteq J^{n-1} Q^{(h(n-1)-h+1)} \subseteq Q^{n-1}$. Therefore,
	$$Q J^{n-1} Q^{(hn-h+1)} \subseteq Q \cdot Q^{n-1} = Q^n,$$
	and thus
	$Q \subseteq \left( Q^n : J^{n-1} Q^{(hn-h+1)} \right)$.
	Therefore,
	$$J \subseteq \left( Q^n : J^{n-1} Q^{(hn-h+1)} \right),$$
	and thus
	$$J^{n} Q^{(hn-h+1)} = J \left( J^{n-1} Q^{(hn-h+1)} \right) \subseteq Q^{n}.$$
Finally, if there is a Frobenius splitting $\phi$ of $R/Q$ that lifts to a Frobenius splitting $\varphi$ of $R$, then directly applying Lemma \ref{lem.I_econtainmentInfPd 1} (note that, in this case, we do not need to enlarge $R$ to assume $k$ is infinite since we will not need Lemma \ref{lem.LiftingMapJacobian} in the following argument), we have
$$R=\left( I^\varphi_e(Q) : Q \right) \subseteq \left( I^\varphi_e(Q) \left( Q^{n-1} \right)^{[q]} : \left( J^{n-1} Q^{(hn-h+1)} \right)^{[q]} \right).$$
Applying $\varphi(F^e_*-)$ and by Lemma \ref{lem.PhiI_e} (and using that $\varphi$ is a Frobenius splitting), we know that
$$R\subseteq QQ^{n-1}:\left( J^{n-1} Q^{(hn-h+1)} \right),$$
that is, $J^{n-1} Q^{(hn-h+1)}\subseteq Q^n$.
	
	\item When $R/Q$ is strongly $F$-regular, we can replace $R$ by $R\otimes_kk(t)$ to assume that $k$ is infinite. We use induction on $n$. If $n=1$ then the containment is obvious. So we assume $n\geq 2$ and we assume the containment for $n-1$, that is, $J^{2n-4} Q^{((h-1)(n-2)+1)} \subseteq Q^{n-1}$.

For every $x\in J$, we fix a map $\varphi\in\Hom_R(F_*^eR, R)$ as in Lemma \ref{lem.LiftingMapJacobian}. Now by Lemma \ref{lem.LiftingMapJacobian} and Lemma \ref{lem.I_econtainmentInfPd 2}, for all $q = p^e\gg0$ we have
$$cR=c \left( I_e^\varphi(Q) : Q \right) \subseteq \left( I_e^\varphi(Q)  \left( Q^{((h-1)(n-2)+1)} \right)^{[q]} : \left( J Q^{((h-1)(n-1)+1)} \right)^{[q]} \right).$$
	Applying $\varphi(F^e_*-)$ and by Lemma \ref{lem.PhiI_e}, we have
	$$\varphi(F^e_*(cR)) \subseteq \left(QQ^{((h-1)(n-2)+1)}: J Q^{((h-1)(n-1)+1)} \right).$$
Since $x=\varphi(F^e_*c)$ mod $Q$ by Lemma \ref{lem.LiftingMapJacobian}, we know that $x\in \left(QQ^{((h-1)(n-2)+1)}: J Q^{((h-1)(n-1)+1)} \right)$ mod $Q$. Since this is true for all $x\in J$, we have
	$$J \subseteq \left(QQ^{((h-1)(n-2)+1)}: J Q^{((h-1)(n-1)+1)} \right) + Q.$$
Clearly, $Q\subseteq \left(QQ^{((h-1)(n-2)+1)}: J Q^{((h-1)(n-1)+1)} \right)$ because
$$QJ Q^{((h-1)(n-1)+1)}\subseteq QQ^{((h-1)(n-1)+1)}\subseteq QQ^{((h-1)(n-2)+1)}.$$
Therefore we have
$$J \subseteq \left(QQ^{((h-1)(n-2)+1)}: J Q^{((h-1)(n-1)+1)} \right),$$
and thus
$$J^2 Q^{((h-1)(n-1)+1)}\subseteq QQ^{((h-1)(n-2)+1)}.$$
Multiplying both side by $J^{2n-4}$ and use the induction hypothesis, we have
$$J^{2n-2} Q^{((h-1)(n-1)+1)} \subseteq J^{2n-4} QQ^{((h-1)(n-2)+1)}\subseteq QQ^{n-1}= Q^n.$$
Finally, if for every $c$ not in any minimal prime of $Q$, there exists $e\gg0$ and $\phi\in\Hom_R(F_*^e(R/Q), R/Q)$ which sends $F^e_*c$ to $1$ that lifts to $\varphi\in \Hom_R(F^e_*R, R)$, then directly applying Lemma \ref{lem.I_econtainmentInfPd 2} (again, in this case, we do not need to enlarge $R$ to assume $k$ is infinite since we will not need Lemma \ref{lem.LiftingMapJacobian}), for all $q=p^e\gg0$ we have
$$cR=c \left( I_e^\varphi(Q) : Q \right) \subseteq \left( I_e^\varphi(Q)  \left( Q^{((h-1)(n-2)+1)} \right)^{[q]} : \left( J Q^{((h-1)(n-1)+1)} \right)^{[q]} \right).$$
Applying $\varphi(F^e_*-)$ and by Lemma \ref{lem.PhiI_e} (and using that $\varphi$ sends $F^e_*c$ to $1$), we know that
$$R \subseteq \left(QQ^{((h-1)(n-2)+1)}: J Q^{((h-1)(n-1)+1)} \right) .$$
Therefore we have
$$J^{n-1}Q^{((h-1)(n-1)+1)}=J^{n-2}J Q^{((h-1)(n-1)+1)}\subseteq QJ^{n-2}Q^{((h-1)(n-2)+1)}\subseteq QQ^{n-1}=Q^n$$
where we have used induction on $n$. \qedhere
	\end{enumerate}
\end{proof}

\section{$F$-pure threshold results}

In this section, we prove versions of symbolic power containments in singular rings involving the $F$-pure threshold. These can be viewed as extensions of the result in \cite[Remark 3.4]{TakagiYoshida}. However, our approach is different from \cite{TakagiYoshida}, inspired by our extension of the ideas in \cite{GrifoHuneke} in the previous sections. We recall that the \emph{$F$-pure threshold of} $I$, $\fpt(I)$, is the supreme of all $t\geq 0$ such that the pair $(R,I^t)$ is $F$-pure. When $(R,\m)$ is strongly $F$-regular local, this is the same as $\lim_{e\to\infty}\frac{\max\{r | I^r \nsubseteq I_e(\m)\}}{p^e}$.

\begin{thm}\label{fpt finite pdim}
Let $R$ be a strongly $F$-regular ring. Let $I\subseteq R$ be a radical ideal with $\BHt(I)=h$ and $\pd(R/I)<\infty$. If the pair $(R, (I^{(a)})^{b})$ is strongly $F$-regular for some $a$ and $b\in\mathbb{R}_{\geq 0}$, then $I^{(hn-\lfloor ab \rfloor)}\subseteq I^n$ for all $n \geqslant 1$. In particular,
$$I^{(hn-\lfloor\fpt(I)\rfloor)}\subseteq I^n$$
for all $n \geqslant 1$.
\end{thm}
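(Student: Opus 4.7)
The plan is to adapt the strategy of the proof of Theorem \ref{thm.MainFinitePdim}(1), using the pair strong $F$-regularity hypothesis in place of the $F$-purity of $R/I$. After localizing at a maximal ideal and fixing $u \in I^{(hn-m)}$ with $m=\lfloor ab\rfloor$, we apply the pair SFR to a suitable nonzerodivisor $c \in R$ to obtain $e \gg 0$, an element $d \in (I^{(a)})^{\lceil b(p^e-1)\rceil}$, and a Cartier map $\phi \in \Hom_R(F^e_*R, R)$ with $\phi(F^e_*(cd))=1$. This splitting will play the role that $\Phi_e(F^e_*((I_e(I):I)))=R$ played in Theorem \ref{thm.MainFinitePdim}(1), and is what drives the improved exponent $hn-m$ in place of $hn-h+1$.

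The key technical step is to establish an analog of Lemma \ref{lem.I_econtainment1} incorporating the pair element $d$. Following the template of Lemma \ref{lem.ContainmentContraction}, one first shows
\[
d \cdot (I^{(hn-m)})^{\lfloor(q-1)/(n-1)\rfloor} \subseteq I^{[q]}
\]
for $q=p^e \gg 0$. Locally at each $P \in \Min(I)=\Ass(I)$, the ideal $IR_P$ equals $\mathfrak{m}_P$ and is generated by $h$ elements, since $R_P$ is regular by $\pd(R/I)<\infty$. The exponent $a\lceil b(p^e-1)\rceil + (hn-m)\lfloor(q-1)/(n-1)\rfloor$ exceeds the Pigeonhole threshold $hq-h+1$ for $p^e \gg 0$, using $ab\geq m$ together with a short calculation valid whenever $n\geq 2$. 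Lemma \ref{lem.AssPrimeFinitePdim} then promotes the local containment to a global one via $(I^{[q]})^W=I^{[q]}$. Raising both sides to the $(n-1)$-th power and using $(n-1)\lfloor(q-1)/(n-1)\rfloor\leq q-1$ gives
\[
d^{n-1}\cdot (I^{(hn-m)})^{q-1}\subseteq (I^{[q]})^{n-1}=(I^{n-1})^{[q]}.
\]

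Imitating the proof of Lemma \ref{lem.I_econtainment1} with this input yields the inclusion $(I_e^\phi(I):I)\cdot d^{n-1}\subseteq (I_e^\phi(I)(I^{n-1})^{[q]} : (I^{(hn-m)})^{[q]})$, and Lemma \ref{lem.PhiI_e} then produces $\phi(F^e_*(td^{n-1}))\cdot I^{(hn-m)}\subseteq I^n$ for every $t\in(I_e^\phi(I):I)$. The final step, which is the main obstacle, is to arrange $\phi(F^e_*(td^{n-1}))=1$ for some such $t$. Given only that $\phi(F^e_*(cd))=1$, one handles this by applying the pair SFR hypothesis to a modified nonzerodivisor such as $cd^{n-2}$ (still nonzero, since a connected strongly $F$-regular ring is a domain) to produce a new splitting compatible with the required form, or, alternatively, by iterating the Cartier algebra to promote $\phi$ to a map $F^{(n-1)e}_*R\to R$ with the desired surjectivity. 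The ``in particular'' statement then follows by choosing $(a,b)=(1,t)$ for $t<\fpt(I)$ close enough that $\lfloor t\rfloor=\lfloor\fpt(I)\rfloor$.
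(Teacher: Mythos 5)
Your reduction to the containment $\phi(F^e_*(td^{n-1}))\cdot I^{(hn-m)}\subseteq I^n$ is fine, but the final step you flag as ``the main obstacle'' is a genuine gap, not a detail. The pair hypothesis hands you a single $\phi$ with $\phi(F^e_*(cd))=1$ for a \emph{single} factor of $d$, and gives no reason whatsoever that $c\in(I_e^\phi(I):I)$: unlike Theorem \ref{thm.MainFinitePdim}, this theorem carries no $F$-purity or strong $F$-regularity hypothesis on $R/I$, and it is exactly that hypothesis (via Lemma \ref{lem.LiftMap} and Lemma \ref{lem.MapRestrict}) which in Theorem \ref{thm.LiftFpure} produces an element of $(I_e(Q):Q)$ mapping to a unit. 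Neither of your proposed repairs closes this. Applying the pair hypothesis to $cd^{n-2}$ is circular: $d$ is only produced \emph{together with} $e$ as output of the strong $F$-regularity of the pair, so it cannot be fed back into the test element before $e$ is known; and composing Cartier maps to pass to $F^{(n-1)e}_*R\to R$ replaces $q$ by $q^{n-1}$, which destroys the exponent count in your key containment $d^{n-1}(I^{(hn-m)})^{q-1}\subseteq (I^{n-1})^{[q]}$. (Separately, your division by $n-1$ excludes $n=1$, and your derivation of the ``in particular'' statement via $t<\fpt(I)$ with $\lfloor t\rfloor=\lfloor\fpt(I)\rfloor$ breaks down when $\fpt(I)$ is an integer.)

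The paper's proof sidesteps the need for any lifted or restricted splitting. From strong $F$-regularity of the pair one gets $\fpt(I^{(a)})>b$, hence an $\epsilon>0$ with $(I^{(a)})^{\lfloor(b+\epsilon)q\rfloor}\nsubseteq I_e(\m)$ for $q\gg0$. One then proves the purely power-theoretic containment $(I^{(hn-\lfloor ab\rfloor)})^{q}\,(I^{(a)})^{\lfloor(b+\epsilon)q\rfloor}\subseteq (I^{[q]})^n\subseteq I_e(I^n)$ by dividing exponents by $n$ (not $n-1$, so $n=1$ is included) and checking at the primes in $\Ass(I)=\Ass(I^{[q]})$, where $R_P$ is regular and the Pigeonhole Principle applies; Lemma \ref{lem.AssPrimeFinitePdim} globalizes this. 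The conclusion is then by contradiction: if $I^{(hn-\lfloor ab\rfloor)}\nsubseteq I^n$, then $(I^n:I^{(hn-\lfloor ab\rfloor)})\subseteq\m$, so Lemma \ref{lem.PhiI_e} forces $\left(I_e(I^n):(I^{(hn-\lfloor ab\rfloor)})^{[q]}\right)\subseteq I_e(\m)$, contradicting $(I^{(a)})^{\lfloor(b+\epsilon)q\rfloor}\nsubseteq I_e(\m)$. If you want to salvage your approach, you should abandon the colon ideal $(I_e^\phi(I):I)$ and the element $d$ altogether and argue with the non-containment in $I_e(\m)$ as above.
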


\begin{proof}
It is enough to prove the containment after localizing at each maximal ideal of $R$ (note that all the hypotheses are preserved under localization), thus we may assume $(R,\m)$ is local. Since $(R, (I^{(a)})^{b})$ is strongly $F$-regular, we know that $\fpt(I^{(a)})>b$ and thus there exists $\epsilon>0$ such that for all $q=p^e\gg0$, $(I^{(a)})^{\lfloor(b+\epsilon)q\rfloor}\nsubseteq I_e(\m)$.

Fix $n \geqslant 1$. We claim that there exists $q\gg0$ such that
\begin{equation}\label{eq fpt}
(I^{(a)})^{\lfloor(b+\epsilon)q\rfloor}\subseteq \left( I_e(I^n):(I^{(hn-\lfloor ab \rfloor)})^{[q]} \right).
\end{equation}
This will finish the proof: indeed, if $I^{(hn- \lfloor ab \rfloor)}\nsubseteq I^n$, then $\left( I^n : I^{(hn-\lfloor ab\rfloor)} \right) \subseteq \m$ which implies $I_e(I^n):(I^{(hn-\lfloor ab \rfloor)})^{[q]} \subseteq I_e(\m)$ by Lemma \ref{lem.PhiI_e}, and hence
$$(I^{(a)})^{\lfloor(b+\epsilon)q\rfloor}\subseteq \left( I_e(I^n):(I^{(hn-\lfloor ab \rfloor)})^{[q]} \right) \subseteq I_e(\m),$$
which is a contradiction.

It remains to prove (\ref{eq fpt}); since $(I^{[q]})^n\subseteq I_e(I^n)$, it is enough to show that
\[
(I^{(hn-\lfloor ab \rfloor)})^q\cdot (I^{(a)})^{\lfloor(b + \epsilon) q \rfloor} \subseteq (I^{[q]})^n,
\]
and thus enough to prove
\[
(I^{(hn-\lfloor ab \rfloor)})^{\left\lfloor\frac{q}{n}\right\rfloor}\cdot (I^{(a)})^{\left\lfloor\frac{(b+\epsilon)q}{n}\right\rfloor}\subseteq I^{[q]}.
\]
Since $\pd(R/I)<\infty$, $\Ass(R/I^{[q]})=\Ass(R/I)$. Therefore, it is enough to check this last containment after localization at the minimal primes of $I$. But since $I$ is radical and $\pd(R/I)<\infty$, localizing at minimal primes of $I$ will yield a regular local ring. Therefore we may assume $I$ is generated by $h$ elements and that its symbolic and ordinary powers coincide. We set $q=kn+r$ with $0\leq r<n$; note that $\lfloor\frac{q}{n}\rfloor=k$. When $q\gg0$, we also must have $k\gg0$, so for $q\gg0$ we may assume $\epsilon k-1 \geq \frac{h(r-1)+1}{a}$, and thus
\begin{eqnarray*}
(hn- \lfloor ab \rfloor)\left\lfloor\frac{q}{n}\right\rfloor + a \left\lfloor \frac{(b+\epsilon)q}{n}\right\rfloor & \geq & (hn-ab)\left\lfloor\frac{q}{n}\right\rfloor + a \left\lfloor \frac{(b+\epsilon)q}{n}\right\rfloor \\
{} & \geq & (hn-ab)k + a \left\lfloor(b + \epsilon) k \right\rfloor \\
  {} &\geq& (hn-ab)k+a((b+\epsilon)k-1) \\
  {} &=& (hn-ab)k+abk+a(\epsilon k-1)  \\
  {} &=& hnk+a(\epsilon k-1)\\
     &\geq& hnk+h(r-1)+1\\
     &=&  hq-h+1.
\end{eqnarray*}
Therefore, after localization at each minimal prime of $I$,
$$(I^{(hn-ab)})^{\left\lfloor\frac{q}{n}\right\rfloor}\cdot (I^{(a)})^{\left\lfloor\frac{(b +\epsilon)q}{n}\right\rfloor} \subseteq I^{hq-h+1}\subseteq I^{[q]},$$
where the last inclusion follows by the Pigeonhole Principle, since $I$ is locally generated by at most $h$ elements.
\end{proof}

Similar computations also yield a statement for ideals not necessarily of finite projective dimension, as we show below in \autoref{fpt infinite pdim}.  The referee pointed out to us an alternate and straightforward refinement of this result as a straightforward consequence  of Takagi's subadditivity formula \cite[Theorem 2.7]{TakagiFormulas}.  We provide the referee's proof below in \autoref{referee}.

\begin{thm}[{\itshape c.f.} \cite{TakagiFormulas}]\label{fpt infinite pdim}
Let $R$ be a strongly $F$-regular, geometrically reduced equidimensional $k$-algebra finitely generated over a field $k$ of prime characteristic $p$. Let $I\subseteq R$ be a radical ideal, $h$ be the largest minimal number of generators of $IR_P$ as $P$ runs through the associated primes of $I$, and let $J$ be the Jacobian ideal of $R$. If the pair $(R, (I^{(a)})^{b})$ is strongly $F$-regular for some $a$ and $b\in\mathbb{R}_{\geq 0}$, then $J^n I^{(hn-\lfloor ab \rfloor)}\subseteq I^n$ for all $n \geqslant 1$. In particular, $$J^n I^{(hn-\lfloor\fpt(I)\rfloor)}\subseteq I^n$$ for all $n \geqslant 1$.
\end{thm}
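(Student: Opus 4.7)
The plan is to argue by contradiction, mirroring the proof of Theorem \ref{fpt finite pdim} but using Lemma \ref{lem.Jacobian} (the Jacobian containment) in place of the step ``$R_P$ is regular for every $P \in \Ass(I)$'' which there relied on $\pd(R/I) < \infty$. Suppose $J^n I^{(hn-\lfloor ab \rfloor)} \nsubseteq I^n$. Fix a maximal ideal $\m$ of $R$ containing the proper colon ideal $(I^n : J^n I^{(hn-\lfloor ab \rfloor)})$. Strong $F$-regularity of the pair $(R, (I^{(a)})^b)$ localizes, so $\fpt(I^{(a)} R_\m) > b$; hence there is $\epsilon > 0$ with $(I^{(a)})^{\lfloor (b+\epsilon)q \rfloor} \nsubseteq I_e(\m)$ for all $q \gg 0$.

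The heart of the proof is the key containment
$$
(J^n)^{[q]} \cdot (I^{(hn-\lfloor ab \rfloor)})^{[q]} \cdot (I^{(a)})^{\lfloor (b+\epsilon)q \rfloor} \subseteq (I^{[q]})^n \subseteq I_e(I^n).
$$
Using $(J^n)^{[q]} = (J^{[q]})^n$, the containment $(I^{(hn-\lfloor ab \rfloor)})^{[q]} \subseteq (I^{(hn-\lfloor ab \rfloor)})^q$, and $\lfloor (b+\epsilon)q \rfloor \geq n\lfloor (b+\epsilon)q/n \rfloor$, the left-hand side is contained in $\bigl( J^{[q]} (I^{(hn-\lfloor ab \rfloor)})^{\lfloor q/n \rfloor} (I^{(a)})^{\lfloor (b+\epsilon)q/n \rfloor} \bigr)^n$, so it suffices to prove
$$
J^{[q]} \, (I^{(hn-\lfloor ab \rfloor)})^{\lfloor q/n \rfloor} \, (I^{(a)})^{\lfloor (b+\epsilon)q/n \rfloor} \subseteq I^{[q]}.
$$
After localizing at any $P \in \Ass(I)$, symbolic and ordinary powers of $I$ agree and $IR_P$ is minimally generated by at most $h$ elements; the exponent bookkeeping from the proof of Theorem \ref{fpt finite pdim} (writing $q = kn + r$ with $0 \leq r < n$ and using $\epsilon k - 1 \geq (h(r-1)+1)/a$ for $q \gg 0$) yields total exponent at least $hq - h + 1$, so the Pigeonhole principle places the product inside $(IR_P)^{[q]}$. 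Hence the product lies in $(I^{[q]})^W$ with $W = R \setminus \bigcup_{P \in \Ass(I)} P$, and multiplying by $J^{[q]}$ lands in $I^{[q]}$ by Lemma \ref{lem.Jacobian}.

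To close the argument, the key containment rewrites as $(I^{(a)})^{\lfloor (b+\epsilon)q \rfloor} \subseteq \bigl( I_e(I^n) : (J^n I^{(hn-\lfloor ab \rfloor)})^{[q]} \bigr)$. Applying Lemma \ref{lem.PhiI_e} with its $Q = I^n$, with its ``$I$'' taken to be $R$, and with its ``$J$'' taken to be $J^n I^{(hn-\lfloor ab \rfloor)}$, every $\varphi \in \Hom_R(F^e_*R, R)$ sends the colon into $(I^n : J^n I^{(hn-\lfloor ab \rfloor)}) \subseteq \m$, whence $\bigl( I_e(I^n) : (J^n I^{(hn-\lfloor ab \rfloor)})^{[q]} \bigr) \subseteq I_e(\m)$. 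Combining, $(I^{(a)})^{\lfloor (b+\epsilon)q \rfloor} \subseteq I_e(\m)$, contradicting the choice of $\epsilon$. The ``in particular'' statement follows as in Theorem \ref{fpt finite pdim}, by taking $a = 1$ and $b$ slightly below $\fpt(I)$. The main obstacle is the Pigeonhole-style exponent estimate inside the key containment; the novelty over the finite-projective-dimension case is the insertion of $(J^n)^{[q]}$, which compensates for the failure of $\Ass(I^{[q]}) = \Ass(I)$ via Lemma \ref{lem.Jacobian} and produces the Jacobian multiplier $J^n$ on the final bound.
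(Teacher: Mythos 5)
Your proposal is correct and follows essentially the same route as the paper: reduce to the local case, use $\fpt(I^{(a)})>b$ to get $(I^{(a)})^{\lfloor(b+\epsilon)q\rfloor}\nsubseteq I_e(\m)$, establish the key containment $(J^nI^{(hn-\lfloor ab\rfloor)})^{[q]}(I^{(a)})^{\lfloor(b+\epsilon)q\rfloor}\subseteq (I^{[q]})^n\subseteq I_e(I^n)$ by reducing to $J^{[q]}(I^{(hn-\lfloor ab\rfloor)})^{\lfloor q/n\rfloor}(I^{(a)})^{\lfloor(b+\epsilon)q/n\rfloor}\subseteq I^{[q]}$, handle the symbolic powers by localizing at $\Ass(I)$ with the Pigeonhole estimate from Theorem \ref{fpt finite pdim}, absorb the resulting $(I^{[q]})^W$ via Lemma \ref{lem.Jacobian}, and conclude with Lemma \ref{lem.PhiI_e}. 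This matches the paper's argument in both structure and detail.
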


\begin{proof}
The strategy is very similar to the proof of Theorem \ref{fpt finite pdim}. It is enough to prove the containment after localizing at each maximal ideal of $R$, thus we may assume $(R,\m)$ is local. Since $(R, (I^{(a)})^{b})$ is strongly $F$-regular, $\fpt(I^{(a)})>b$ and thus there exists $\epsilon>0$ such that for all $q=p^e\gg0$, $(I^{(a)})^{\lfloor(b+\epsilon)q\rfloor}\nsubseteq I_e(\m)$.

Fix $n \geqslant 1$. We claim that there exists $q\gg0$ such that
\begin{equation}\label{need}
(I^{(a)})^{\lfloor(b+\epsilon)q\rfloor}\subseteq \left( I_e(I^n):(J^n I^{(hn-\lfloor ab \rfloor)})^{[q]} \right).
\end{equation}
Note that this claim will finish the proof: if $J^n I^{(hn-\lfloor ab \rfloor)} \nsubseteq I^n$, then $\left( I^n : J^n I^{(hn-\lfloor ab\rfloor)} \right) \subseteq \m$, which implies $\left( I_e(I^n):(J^n I^{(hn-\lfloor ab \rfloor)})^{[q]} \right) \subseteq I_e(\m)$ by Lemma \ref{lem.PhiI_e}, and hence
$$(I^{(a)})^{\lfloor(b+\epsilon)q\rfloor}\subseteq \left( I_e(I^n):(J^n I^{(hn-\lfloor ab \rfloor)})^{[q]} \right) \subseteq I_e(\m),$$
 which is a contradiction.

It remains to prove (\ref{need}); since $(I^{[q]})^n\subseteq I_e(I^n)$, it is enough to show that
\[
(J^n I^{(hn-\lfloor ab \rfloor)})^{[q]}\cdot (I^{(a)})^{\lfloor(b + \epsilon) q \rfloor} \subseteq (I^{[q]})^n,
\]
and thus enough to prove
\[
J^{[q]} (I^{(hn-\lfloor ab \rfloor)})^{\left\lfloor\frac{q}{n}\right\rfloor}\cdot (I^{(a)})^{\left\lfloor\frac{(b+\epsilon)q}{n}\right\rfloor}\subseteq I^{[q]}.
\]
By Lemma \ref{lem.Jacobian}, $J^{[q]} \left( I^{[q]} \right)^W \subseteq I^{[q]}$, where $W$ is the complement of the union of all the associated primes of $I$. Thus all we need to show is that
$$(I^{(hn-\lfloor ab \rfloor)})^{\left\lfloor\frac{q}{n}\right\rfloor} \cdot (I^{(a)})^{\left\lfloor\frac{(b+\epsilon)q}{n} \right\rfloor} \subseteq \left( I^{[q]} \right)^W,$$
and it is sufficient to prove this containment after localizing at each associated prime $P$ of $I$. By our assumption on $h$, after localization at $P$, $I$ can be generated by $h$ elements. The rest of the proof is the same as in Theorem \ref{fpt finite pdim}.
\end{proof}

\begin{remark}\label{referee}
The referee has kindly provided us with the following improvement of Theorem \ref{fpt infinite pdim}, using the subadditivity property of asymptotic test ideals \cite{TakagiFormulas}. With the same notation as in Theorem \ref{fpt infinite pdim}, since $(R, (I^{(a)})^b)$ is strongly $F$-regular, the asymptotic test ideal $\tau(\lfloor ab \rfloor \cdot I^{(\bullet)})$ is the unit ideal.\footnote{We refer to \cite{TakagiFormulas} for the precise definition of asymptotic test ideal. In our context, we are considering the graded family of ideals $\mathfrak{a}_\bullet:=\{\mathfrak{a}_n\}_n$ such that $\mathfrak{a}_n=I^{(n)}$, and we use $I^{(\bullet)}$ to abbreviate this notation.} Therefore, we have
\begin{eqnarray*}
  J^{n-1}I^{(hn-\lfloor ab \rfloor)} &=& J^{n-1}I^{(hn-\lfloor ab \rfloor)}\tau(\lfloor ab \rfloor \cdot I^{(\bullet)}) \\
   &\subseteq & J^{n-1}\tau(hn \cdot I^{(\bullet)}) \\
   &\subseteq& \tau(h\cdot I^{(\bullet)})^n \subseteq I^n,
\end{eqnarray*}
where the first inclusion follows from \cite[Lemma 4.5]{TakagiFormulas}, the second inclusion follows from \cite[Proposition 4.4]{TakagiFormulas}, and the last inclusion follows since $\tau(h\cdot I^{(\bullet)}) \subseteq I$: after localizing at each associated prime $P$ of $I$, we have $\tau(h\cdot I^{(\bullet)}R_P) = \tau(I^hR_P) \subseteq IR_P$ by \cite[Theorem 1.11]{TakagiFormulas} (in fact, here we only need to assume that $h$ is the largest analytic spread of $IR_P$ as $P$ runs over all associated primes of $I$).
\end{remark}

\section{Examples}

In this section, we collect a few examples: an example suggesting Theorem \ref{thm.MainInfPdim} can be improved in the strongly $F$-regular case, and examples of ideals of finite projective dimension for which the conclusion of Theorem \ref{thm.MainFinitePdim} does not follow from previously known results in the regular setting.

\begin{example}\label{example xy-zk}

	Let $R = K [ x,y,z ]/(xy-z^k)$ where $k \geqslant 2$ and $K$ is a field of characteristic $p \nmid k$. Consider the prime ideal $Q = (x,z)$ in $R$, which has infinite projective dimension. Theorem \ref{thm.MainFinitePdim} does not apply to $Q$; if it did, it would say that $Q^{(n)} = Q^n$ for all $n \geqslant 1$, since $R/Q$ is strongly $F$-regular. In fact, $Q^{(n)} \neq Q^n$ for all $n \geqslant 2$, since $x^{n+r} \in Q^{(kn+r)} \setminus Q^{kn}$ for all $n \geqslant 1$ and $0 \leqslant r < k$. It turns out that $Q^{(kn)} = \left( x^n \right)$. Theorem \ref{thm.MainInfPdim} does apply, and it says that $J^{n} Q^{(n)} \subseteq Q^n$ for all $n$. We claim, however, that this can be improved. First, let's show that
	$$J^{(k-1)n} Q^{(kn)} \subseteq Q^{kn} \textrm{ for all } n \geqslant 1.$$
	To do that, we start by noting that the Jacobian ideal is given by $J = \left( x,y,z^{k-1} \right)$, so $J^{(k-1)n}$ is generated by all the elements of the form $x^ay^bz^{(k-1)c}$ with $a+b+c=(k-1)n$. If $n + a - b \geqslant 0$, then
	$$\left( x^ay^bz^{(k-1)c} \right) x^n = x^{n+a-b} \left( xy \right)^b z^{(k-1)c} = x^{n+a-b} z^{kb+(k-1)c} \in Q^{n+(k-1)(b+c) + a} \subseteq Q^{n + a + b + c} = Q^{kn}.$$
	On the other hand, if $n + a - b < 0$, then we necessarily have $b > n$, so
	$$y^b \cdot x^n = y^{b-n} (xy)^{n} = y^{b-n} z^{kn} \in Q^{kn}.$$
	This proves our claim that $J^{(k-1)n} Q^{(kn)} \subseteq Q^{kn}$. We now observe that, in fact, this can be extended to
	$$J^{\left\lfloor \frac{k-1}{k} n \right\rfloor} Q^{(n)} \subseteq Q^n \textrm{ for all } n \geqslant 1.$$
	To see this, we will first need to check that for each $0 < r < k$,
	$$Q^{(kn+r)} = Q^{(k(n+1))} + \sum_{a=0}^n Q^{(k(n-a))} Q^{ak+r} = (x^{n+1}) + \sum_{a=0}^n (x^{n-a}) Q^{ak+r}.$$
	Equivalently, the statement above says that the symbolic Rees algebra of $Q$ is generated by $Q$ and $Q^{(k)} = (x)$. To show this, we need to do calculate the symbolic powers of $Q$. Since $Q$ is a quasi-homogeneous prime, its symbolic powers are generated by monomials in $x$, $y$, and $z$, and for height reasons we must have $Q^{(n)} = Q^n : (x,y,z)^\infty$. A given $f$ is in $Q^{(n)}$ if and only $f \cdot (x,y,z)^N \subseteq Q^{n}$ for some $N$, which is equivalent to checking whether $fy^N \in Q^{n}$ for some $N$. When $N$ is sufficiently large,
	$$x^ay^bz^c \cdot y^N = y^{b+N-a}(xy)^a (z^c) = y^{b+N-a}z^{ka+c} \in Q^{ka+c},$$
	so we have $x^ay^bz^c \in Q^{(n)}$ if and only if $ka + c \geqslant n$. This shows that indeed we have
	$$Q^{(kn)} = \left( Q^{(k)} \right)^n = (x^n) \textrm{ and } Q^{(kn+r)} = Q^{(k(n+1))} + \sum_{a=0}^n Q^{(k(n-a))} Q^{ak+r} \textrm{ for all } n \geqslant 1 \textrm{ and } 0 < r < k.$$
	We still need to show that $J^{\left\lfloor \frac{k-1}{k} (kn+r) \right\rfloor} Q^{(kn+r)} \subseteq Q^{kn+r}$ for all $n \geqslant 1$ and $0 < r < k$. Now
	$$\left\lfloor \frac{k-1}{k} (kn+r) \right\rfloor = \left\lfloor (k-1)n + r - \frac{r}{k} \right\rfloor = (k-1)n + r -1,$$
	and $r \geqslant 1$, so using that $J^{(k-1)n} Q^{(kn)} \subseteq Q^{kn}$ we get
	\begin{align*}
		J^{\left\lfloor \frac{k-1}{k} (kn+r) \right\rfloor} Q^{(kn+r)} & = J^{(k-1)n + r -1} Q^{(k(n+1))} + J^{(k-1)n + r -1} \sum_{a=0}^n  Q^{(k(n-a))} Q^{ak+r} \\
		& \subseteq J^{(k-1)n} Q^{(kn)}  J^{r-1}Q^{(k)} + J^{(k-1)n} Q^{(kn)} Q^r \\
		& \subseteq Q^{kn} J^{r-1}Q^{(k)} + Q^{kn} Q^r.
	\end{align*}
	So our claim is now reduced to checking that $J^{r-1} Q^{(k)} \subseteq Q^r$. Given a generator for $J^{r-1}$, say $x^ay^bz^{(k-1)c}$ with $a+b+c\geqslant r-1$, we need to check that $x^{a+1}y^bz^{(k-1)c} \in Q^r$. And indeed, either $b = 0$, in which case $a+1+(k-1)c \geqslant r$ and $x^{a+1}z^{(k-1)c} \in Q^r$, or $b \geqslant 1$ and $x^{a+1}y^bz^{(k-1)c} \in (xy) = (z^k) \subseteq Q^k \subseteq Q^{r}$.
		
	Finally, note that $x^{(k-1)n-1} \cdot x^n \notin Q^{kn}$ and thus $J^{(k-1)n-1} Q^{(kn)} \nsubseteq Q^{kn}$. We conclude that Theorem \ref{thm.MainInfPdim} can be improved for this example: the theorem says that $J^{n} Q^{(n)} \subseteq Q^n$ for all $n \geqslant 1$, while our computations show that the minimum $\alpha$ such that
	$$J^{\left\lfloor \alpha n \right\rfloor} Q^{(n)} \subseteq Q^n \textrm{ for all } n \geqslant 1$$
	is $\alpha = \frac{k-1}{k}$.
\end{example}

It is natural to ask if Theorem \ref{thm.MainInfPdim} (2) can be improved in general:

\begin{question}
	Let $R$ be a geometrically reduced equidimensional $k$-algebra, finitely generated over a field $k$ of prime characteristic $p$. Let $Q \subseteq R$ be an ideal and let $J=J(R/k)$ be the Jacobian ideal of $R$. Supoose that $R/Q$ is strongly $F$-regular and $h\geq 2$ is at least the largest minimal number of generators of $QR_P$ for all $P\in\Ass(R/Q)$. Is
	$$J^{n} Q^{((h-1)(n-1)+1)} \subseteq Q^n$$
	for all $n \geqslant 1$?
\end{question}

Given Example \ref{example xy-zk}, a positive answer to this question would essentially give an optimal value for the power of $J$ required to fix the failure of Theorem \ref{thm.MainFinitePdim} when the ideals are not necessarily of finite projective dimension.

We are grateful to Bernd Ulrich for communicating the following example and proposition to us.

\begin{example}
\label{example Bernd}
Let $S=K(u_{ij}, v_{ij})\llbracket x_1, x_2, x_3, x_4, x_5, x_6 \rrbracket$, $I=(\Delta_1, \Delta_2, \Delta_3)$ where the $\Delta$'s are the $2\times 2$ minors of the $2\times 3$ matrix $M$ whose entries are linear combinations of the $x_i$'s using $u_{ij}$, i.e., the entries in the matrix are $\sum_j u_{ij}x_j$ (so they are all generic). Let $f=\sum v_{ij}x_ix_j$ be a generic polynomial of degree $2$ and set $R=S/f$. Then $0\to R^2\xrightarrow{M} R^3\xrightarrow{(\Delta_1, \Delta_2, \Delta_3)} R\to R/I\to 0$ is a projective resolution of $R/I$ over $R$ since the rank and depth conditions are easily verified. Hence $\pd(R/I)=\Ht(I)=2$ and one can check that $R/I$ is strongly $F$-regular (we can also choose more specific matrix and $f$ to make this true, but the generic choice always works). Theorem \ref{thm.MainFinitePdim} then implies that $I^{(n)}=I^n$ for all $n$.
\end{example}

\begin{remark}
We want to point out that, in the above example, one cannot find a regular local ring $(A,\m)$ and an ideal $J\subseteq A$ with a faithfully flat extension $A\to R$ such that $I=JR$ (i.e., the pair $(R, I)$ does not come from a faithfully flat base change from a pair $(A, J)$ with $A$ regular). If so, then since $I=JR$ we know that $\Ht(J)=2$ so a minimal resolution of $A/J$ over $A$ looks like $0\to A^{n-1}\xrightarrow{M'} A^{n}\to A\to A/J\to 0$. Since $A\to R$ is faithfully flat, tensoring this resolution with $R$ we should get a minimal resolution of $R/I$. This implies that $n=3$ and that $\text{Fitt}_2(I)$ is generated by the entries in $M'$, in particular $\text{Fitt}_2(I)\subseteq \m R$. However, by the way we construct $I$, we know that $\text{Fitt}_2(I)=(x_1,\dots,x_6)$ is the maximal ideal of $R$. Therefore $(A,\m)\to (R,\n)$ is a faithfully flat local extension such that $\m R= \n$, which implies $R$ is regular, a contradiction. In particular, in Example \ref{example Bernd}, the fact $I^{(n)}=I^n$ for all $n$ does not follow from results in \cite{GrifoHuneke}.
\end{remark}

We next prove a proposition which gives another explanation for Example \ref{example Bernd}.

\begin{proposition}
\label{prop.Bernd}
Let $(R,\m)$ be a Cohen-Macaulay local or standard graded ring with an infinite residule field such that $\dim R=d$. Let $M$ be an $n\times (n+1)$ matrix of generic linear forms (i.e., elements in $\m-\m^2$) and set $I$ be the ideal generated by $n\times n$ minors of $M$ (so $I$ is a height two perfect ideal of $R$). Then
\begin{enumerate}
  \item If $d>n+1$, then $I^{(j)}=I^{j}$ for all $j$.
  \item If $d=n+1$, then $I^{(j)}\neq I^{j}$ for all $j\geq n$.
\end{enumerate}
\end{proposition}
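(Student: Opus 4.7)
The plan is to leverage the Eagon--Northcott resolution of $R/I$ together with a Burch-type analysis of the analytic spread versus the depth of powers of $I$.

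For part (1), since the entries of $M$ are generic linear forms and $d>n+1$, the ideal $I=I_n(M)$ attains its expected height $2$, and the Eagon--Northcott complex produces a free resolution of $R/I$ of length $2$; consequently $R/I$ is Cohen--Macaulay and $I$ is a perfect grade-$2$ ideal. The next step is to verify the Fitting condition $F_1$: at the maximal ideal $\mu(I_\m)=n+1<d=\Ht\m$, and at smaller primes the genericity of the entries of $M$ forces the local number of generators to drop correspondingly. One then invokes the theorem that a perfect grade-$2$ ideal satisfying $F_1$ is of linear type (essentially Huneke's result on maximal minors of a generic matrix, specialized to the present setting). Linear type, together with $I$ being generically a complete intersection (locally two-generated at each minimal prime of height $2$), forces $\Ass(R/I^j)=\Min(I)$ for every $j$, equivalently $I^j=I^{(j)}$.

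For part (2), when $d=n+1$ the analytic spread satisfies $\ell(I)=n+1=d$, so Burch's inequality gives $\min_j\depth(R/I^j)\le d-\ell(I)=0$ and hence $\m\in\Ass(R/I^j)$ for some $j$, yielding $I^j\neq I^{(j)}$ (as $\m$ is not a minimal prime of $I$ when $n\ge 2$). To pin down the threshold to all $j\ge n$, I would pass to a standard-graded situation (either by hypothesis or via the associated graded ring) and run a Hilbert function computation: the Eagon--Northcott complex forces the initial degree of $I^j$ to be exactly $jn$, while a direct count on the zero-dimensional scheme $\Proj(R/I)\subseteq\Proj R$ produces a nonzero element of $I^{(j)}$ in degree strictly less than $jn$ for every $j\ge n$. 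Alternatively, one could construct explicit socle elements of $R/I^j$ from products of submaximal minors of $M$ together with the syzygies coming from the rows of $M$.

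The main obstacle will be in part (2): Burch's inequality alone only produces some $j$, while the claim pins down the threshold to $j\ge n$. The graded Hilbert function estimate, combined with the classical description of initial degrees of symbolic powers of $0$-dimensional subschemes of projective space, is the crucial input, and reducing from the local case to the graded case requires ensuring that passing to the associated graded ring preserves both the genericity of the initial forms of the entries of $M$ and the height of the resulting ideal of minors. In part (1), the analogous subtlety is verifying that the linear-type property, traditionally stated for the universal polynomial ring in the entries, descends to the specialization by generic linear forms when $d>n+1$, and that the Rees algebra remains Cohen--Macaulay after this specialization.
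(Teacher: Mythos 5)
Your route is genuinely different from the paper's, and it splits into a salvageable half and a broken half. For part (1), the paper does something much more elementary than Rees-algebra theory: from the resolution $0\to R^n\xrightarrow{M}R^{n+1}\to R\to R/I\to 0$ it builds, for each $j$, the generalized Koszul complex $\bigwedge^{j-\bullet}(R^n)\otimes S_\bullet(R^{n+1})$ resolving $R/I^j$; genericity gives acyclicity via the rank-and-depth criterion, so $\pd(R/I^j)\leq n+1<d$, hence $\depth(R/I^j)>0$ by Auslander--Buchsbaum, hence $H^0_\m(R/I^j)=0$, and an induction on dimension (localizing at $P\neq\m$) finishes. Your alternative through linear type can be made to work, but not as stated: linear type together with ``generically a complete intersection'' does \emph{not} by itself force $\Ass(R/I^j)=\Min(I)$. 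What you actually need is that for a perfect grade-$2$ ideal satisfying $F_1$ the associated graded ring is Cohen--Macaulay (Huneke, Avramov--Herzog), which upgrades Burch's inequality to an equality, giving $\depth(R/I^j)\geq d-\ell(I)=d-(n+1)>0$ for all $j$ and hence $\m\notin\Ass(R/I^j)$; you must also justify $\ell(I)=\mu(I)=n+1$ and the strict local bounds $\mu(I_P)<\Ht P$ at intermediate primes. These are real inputs you only gesture at.

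Part (2) has a genuine gap. Burch's inequality only produces \emph{some} $j$ with $\depth(R/I^j)=0$, and your proposed repair does not close the distance to ``all $j\geq n$.'' First, the premise is false: $\Proj(R/I)$ is not zero-dimensional -- since $\Ht(I)=2$ and $d=n+1$, it has dimension $n-2$ -- so the classical initial-degree theory for symbolic powers of points does not apply. Second, there is no reason an element of $I^{(j)}\setminus I^j$ should live in degree below $jn$: the discrepancy is (contained in) $H^0_\m(R/I^j)$, which can sit in high degrees, so a Hilbert-function comparison of initial degrees cannot detect it. The paper's proof of (2) is again via the same family of complexes: for $j\geq n$ the complex resolving $R/I^j$ has length exactly $n+1=d$, so $\depth(R/I^j)=d-\pd(R/I^j)=0$ and therefore $0\neq H^0_\m(R/I^j)\subseteq I^{(j)}/I^j$. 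The uniform threshold $j\geq n$ comes precisely from the length of these resolutions jumping from $j+1$ to the stable value $n+1$ at $j=n$, an input your approach never produces.
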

\begin{proof} From the resolution: $0\to R^n\xrightarrow{M}R^{n+1}\to R\to R/I\to 0$, we form the complex:
$$0\to \bigwedge^j(R^n)\to \bigwedge^{j-1}(R^n)\otimes S_1(R^{n+1})\to\cdots\to\bigwedge^1(R^n)\otimes S_{j-1}(R^{n+1})\to S_j(R^{n+1})\to R\to 0.$$
This complex has abutment $R/I^j$, with length $j+1$ when $j\leq n$ and length $n+1$ when $j\geq n$. In particular, if $d\geq n+1$, it is easily verified that the rank and depth conditions are satisfied (since we use generic elements) thus the complex is a projective resolution of $R/I^j$ for all $j$.

Now if $d>n+1$, then $\depth(R/I^j)=d-\pd(R/I^j)>0$ so $H_\m^0(R/I^j)=0$. To show $I^{(j)}=I^{j}$ for all $j$, note that we may assume $I^{(j)}R_P=I^{j}R_P$ for all $P\neq \m$ by induction, but then $H_\m^0(R/I^j)=0$ implies $I^{(j)}=I^{j}$. Finally, if $d=n+1$, then $\depth(R/I^j)=d-\pd(R/I^j)=0$ for $j\geq n$, thus $0\neq H_\m^0(R/I^j)\subseteq I^{(j)}/I^j$.
\end{proof}

\begin{remark}
It is easy to see that Example \ref{example Bernd} satisfies condition $(1)$ of Proposition \ref{prop.Bernd}. Moreover, Theorem \ref{thm.MainFinitePdim} shows that under condition $(2)$ of Proposition \ref{prop.Bernd}, $R/I$ cannot be strongly $F$-regular if $R$ is Gorenstein. Here we give another explanation under the graded setup of Proposition \ref{prop.Bernd}: in fact, if $R$ is standard graded, then from the exact sequence $0\to R^n(-n-1)\to R^{n+1}(-n)\to R\to R/I\to 0$ it is easy to see that $a(R/I)=a(R)+n+1$. Since $R$ is Cohen-Macaulay of dimension $d$, $a(R)\geq -d$ and hence $a(R/I)\geq n+1-d=0$ under condition (2), thus $R/I$ cannot be strongly $F$-regular.
\end{remark}

\section*{Acknowledgements}

We thank Bernd Ulrich for giving us Example \ref{example Bernd} and Proposition \ref{prop.Bernd}. We thank Srikanth Iyengar for very helpful discussions on derived categories and for providing us an alternative approach to Lemma \ref{lem.LiftMap} in Remark \ref{rmk.Sri}. We also thank Lucho Avramov, Jack Jeffries, Claudia Miller, and Alexandra Seceleanu for valuable conversations and for giving us several interesting examples. We also thank Javier Carvajal-Rojas, Craig Huneke, Thomas Polstra, and Axel St\"{a}bler for useful conversations and comments. The first author thanks the University of Utah, where she was visiting when part of this project was completed, for their hospitality. Finally, we thank the anonymous referee for their valuable comments, especially Remark \ref{referee}.

\bibliographystyle{alpha}
\newcommand{\etalchar}[1]{$^{#1}$}

\vspace{1em}

\end{document}